\documentclass[12pt]{article}
 \usepackage[margin=1in]{geometry} 
\usepackage{amsmath,amsthm,amssymb,amsfonts}
\usepackage{graphicx}
\usepackage{url}
\usepackage{float} 

\newcommand{\R}{\mathbb{R}}

\theoremstyle{plain}
\newtheorem{theorem}{Theorem}
\newtheorem{lemma}[theorem]{Lemma}

\newtheorem{corollary}[theorem]{Corollary}
\theoremstyle{definition}

\begin{document}
 

\title{A Lower Bound on The Blob}
\author{Jamie Schmidt}
\maketitle

\begin{abstract}

A blob grows on the plane (as in films called ``The Blob" of 1958 and 1988).  Suppose that it grows in all
directions at unit rate, and can be stopped only by a certain kind of fence that can be manufactured
at rate $\lambda$.  What is the critical value $\lambda_C$ for this rate, above which the blob can be
surrounded and contained, and below which we are all doomed?

This problem, introduced by Bressan \cite{bressan2007} in 2007 and independently by Barghi and Winkler \cite{BW} in 2013,
was in both cases motivated by consideration of strategies in fighting forest fires.  In particular, does it
pay to place ``gambit" barriers which will subsequently be overwhelmed by the fire, in order to slow it
down enabling later containment?

A non-gambit strategy, which will be described, succeeds when $\lambda > 2$ and is widely believed to be
optimal; but up until now no one has succeeded in obtaining a lower bound greater than 1 for $\lambda_C$.
We introduce a novel weakened version of the blob which is used to raise the lower bound to 1.5.

\end{abstract}

\newpage
\section{Introduction}
\label{sec:upper-bound}

The blob acts in the following manner. At time \(t=0\), it begins as the unit disk. 

\begin{figure}[H]
    \centering
    \includegraphics[ width=0.5\linewidth]{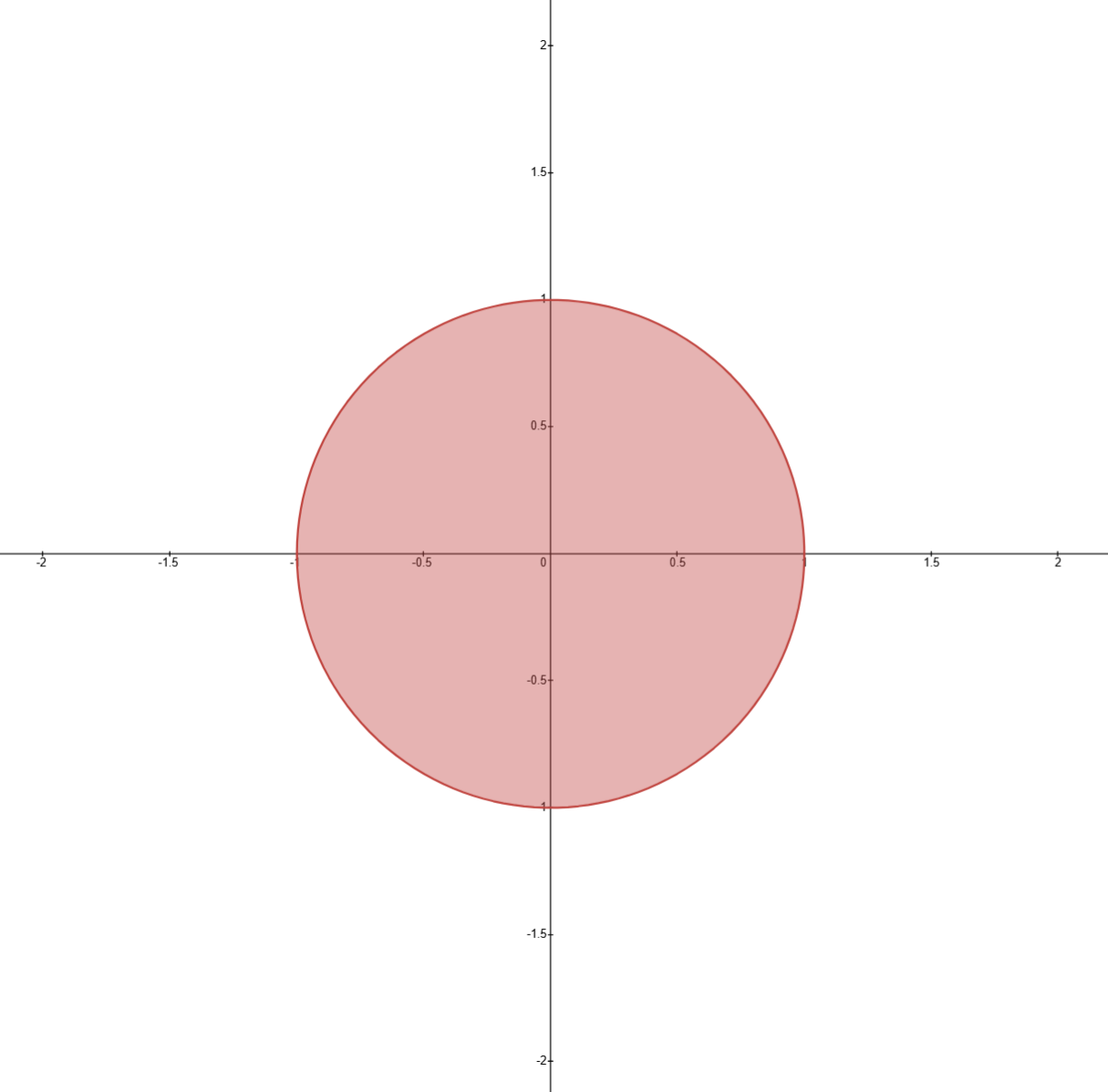}
    \caption{The blob at time zero.}
    \label{fig:the-blob-at-time-zero}
\end{figure}

The blob then grows at a rate of one unit per second.

\begin{figure}[H]
    \centering
    \includegraphics[width=0.5\linewidth]{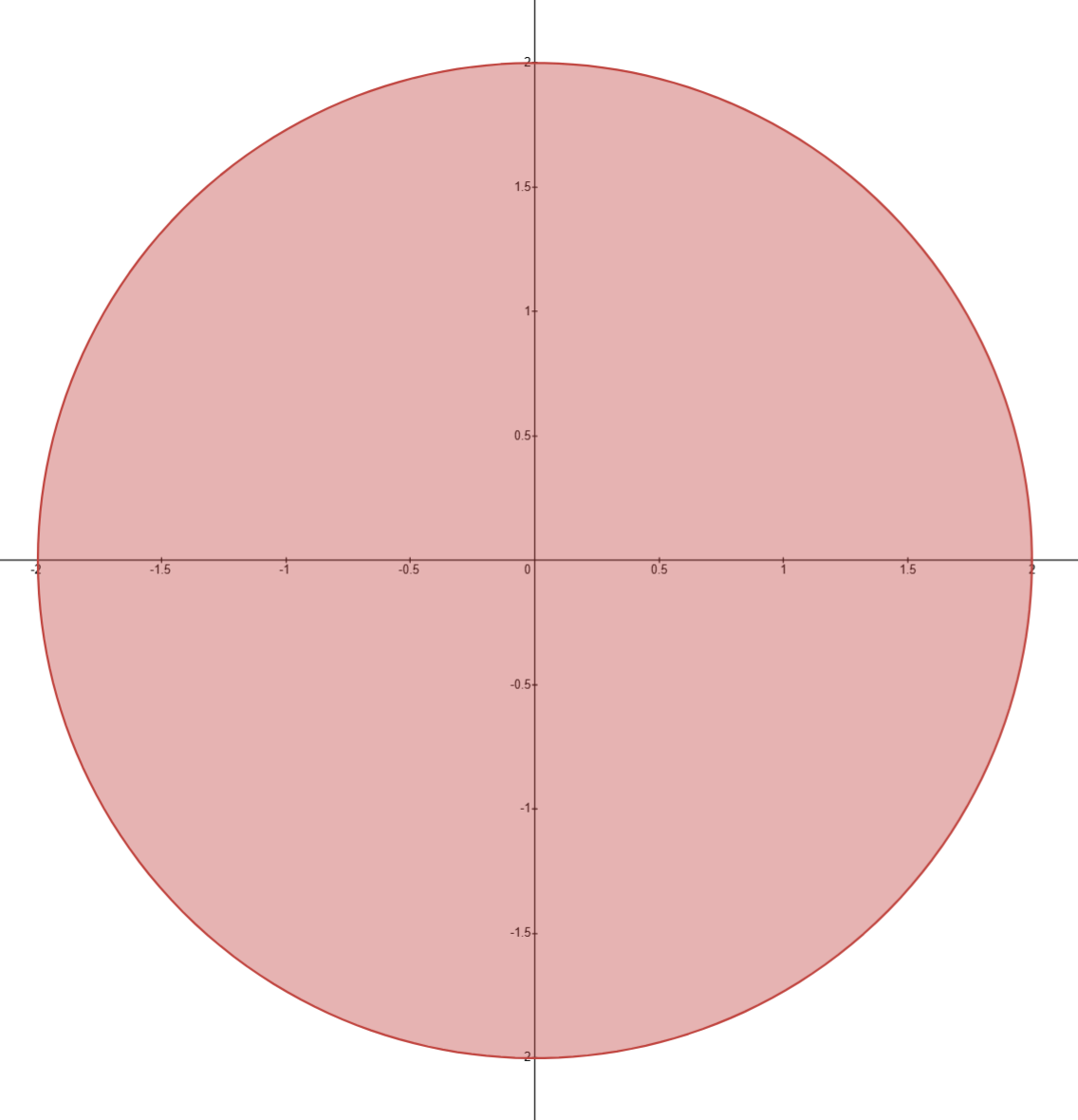}
    \caption{The blob at time one.}
    \label{fig:the-blob-at-time-one}
\end{figure}

If there is a wall in the way, as the blob grows from all points, it begins to encircle the wall. 

\begin{figure}[H]
    \centering
    \includegraphics[width=0.75\linewidth]{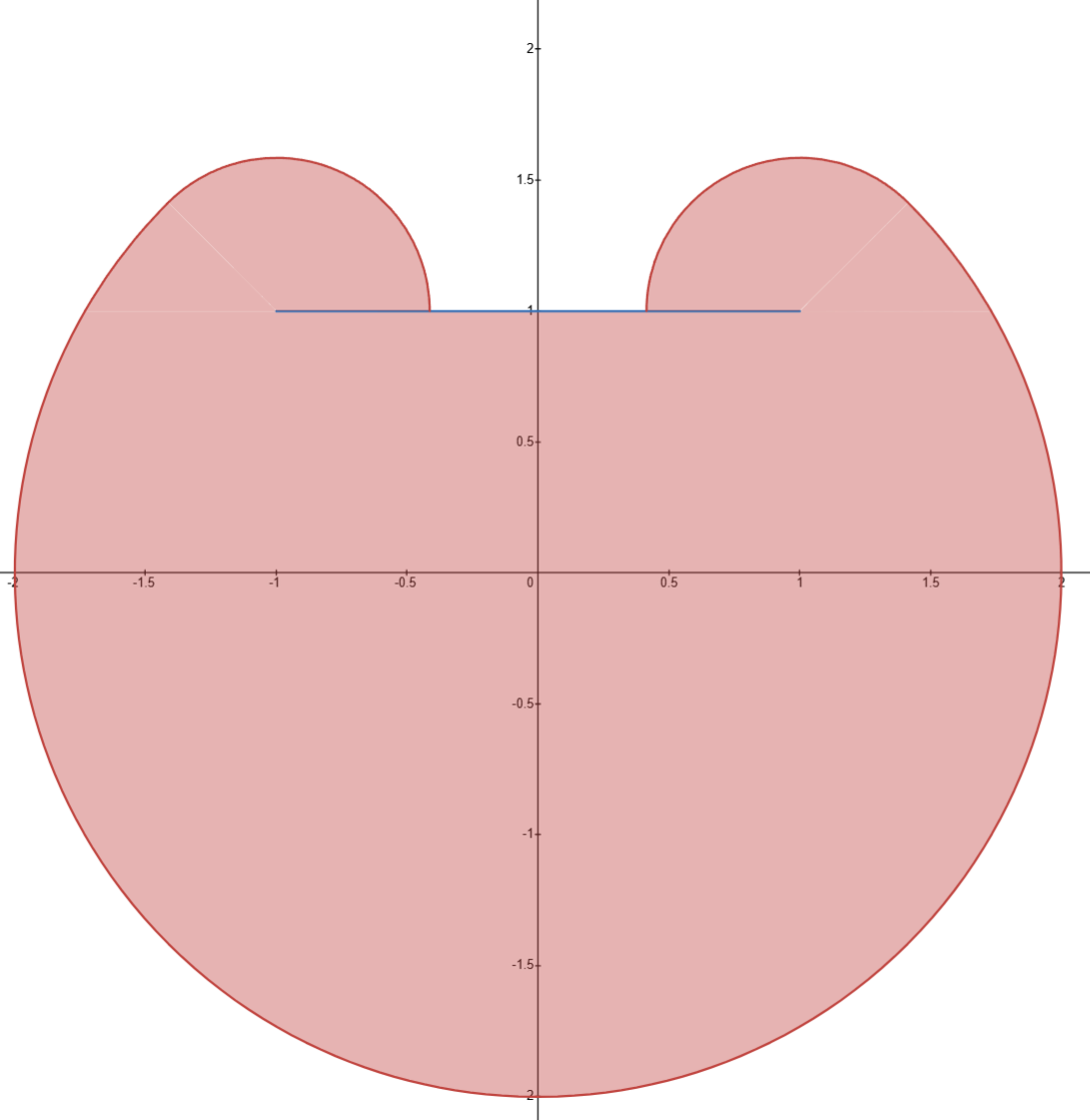}
    \caption{The blob at time one, with a wall drawn from (-1,1) to (1,1).}
    \label{fig:the-blob-at-time-one-with}
\end{figure}

The walls are defined as a finite union of rectifiable, compact curves, the blob at time \(t\) is the set of all points with shortest-path distance \(t+1\) from the origin, the paths in question being the set of all paths which do not intersect a wall.  The containment is successful if the blob is contained within a closed figure at some time \(T\) and if, at all times \(t\), the length of wall within shortest-path distance \(t+1\) from the origin is at most \(\lambda t\). 

We now note that, by multiplying both distance and time by a constant factor, the rates of 1 and \(\lambda\) remain unchanged, as a unit per second is the same as \(x\) units per \(x\) seconds. This means that we can effectively ``shrink" the blob by an arbitrary factor. 

\begin{figure}[H]
    \centering
    \includegraphics[width=0.75\linewidth]{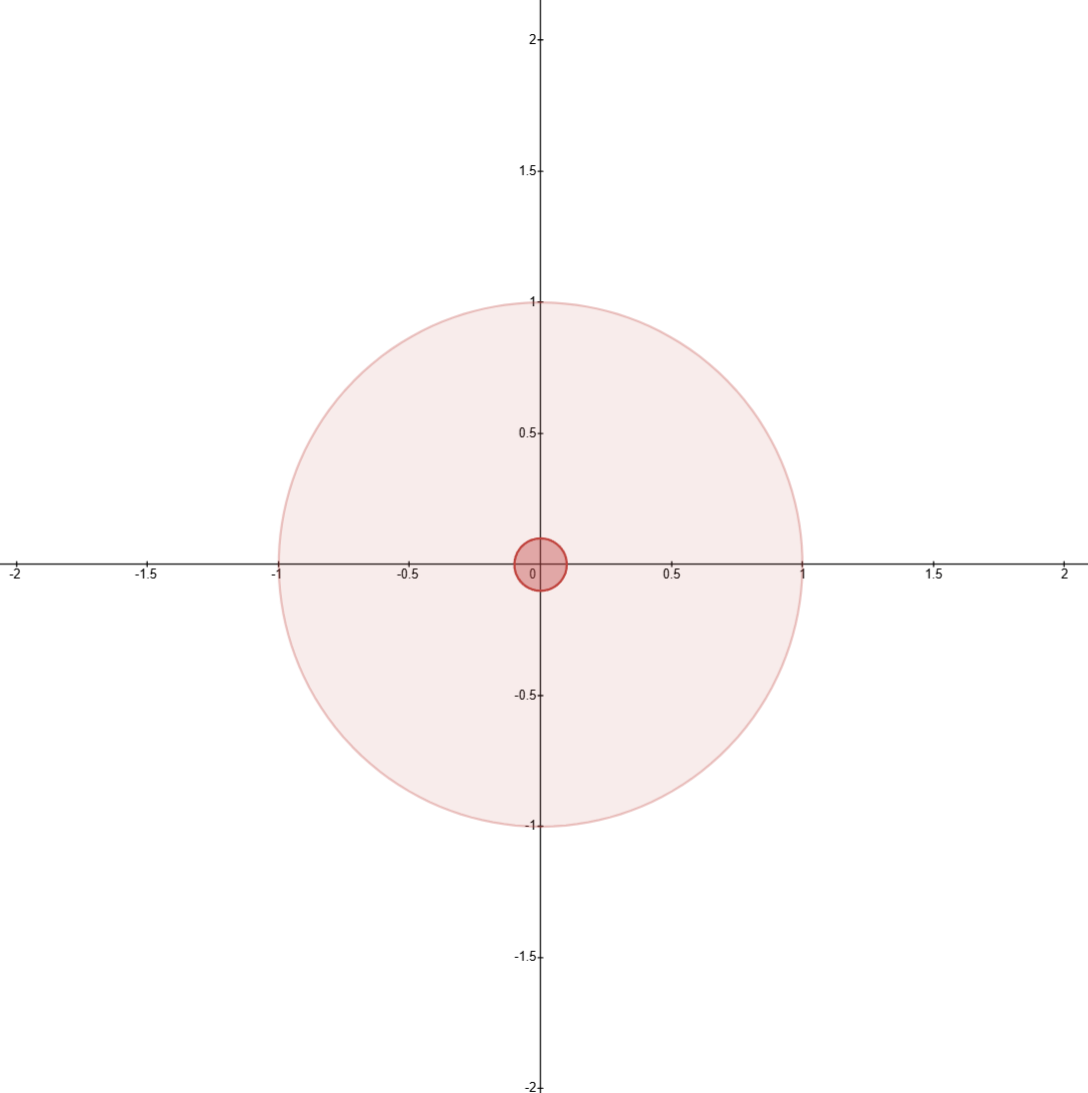}
    \caption{The blob shrunk by a factor of 10. This will also, effectively, shrink the time constant by 10.}
    \label{fig:the-blob-shrunk-by-a-factor}
\end{figure}

\subsection{Related work}

The blob problem, in this generality, is Bressan's dynamic blocking problem for a model of forest fire propagation~\cite{bressan2007,bressan2008}, and the bounds \(2\geq \lambda_C\geq 1\), reproved in Section~\ref{sec:upper-bound} and Section~\ref{sec:vine}, are originally due to Bressan. Existence of optimal blocking strategies was established by Bressan and De Lellis~\cite{bressandelellis2009}. The general question of whether \(\lambda_C=2\), Bressan's Fire Conjecture, remains open; a standing prize has been offered for its resolution~\cite{bressanprize}. Most of Bressan's work on this problem was independently replicated by Barghi and Winkler \cite{BW}, who reframed the problem in terms of a blob. Later, in unpublished work, Winkler and Schmidt independently replicated most of the rest of Bressan's work on this problem. 

Restricted variants of the problem have seen more progress: for barriers built as a self-closing spiral around the fire, the sharp threshold \(\lambda\approx2.6144\) is known~\cite{kleinlangetepelevcopoulos2015,bianchinizizza2025}, matching our own independent work, and for fire confined to a half-plane by a horizontal barrier with vertical delaying segments, Kim, Klein, K\"ubel, Langetepe, and Schwarzwald showed \(\lambda>1.66\) is necessary and \(\lambda=1.8772\) is sufficient~\cite{kimetal2019}. As far as we are aware, no improvement to the general lower bound of \(\lambda_C\geq1\) for the original, unrestricted problem has previously appeared. The main contribution of this paper is to push that bound to \(\lambda_C\geq1.5\).

\subsection{Prior Results on the Blob}

In our pursuit of an improved lower bound, we may slightly weaken the blob by assuming the limit. We now assume the blob starts at a single point, with an arbitrary ``exclusion radius" around it in which we cannot create walls. This radius is truly arbitrary, as it may always be assumed to be smaller than the closest any wall gets to the single point, as long as no wall gets arbitrarily close to said point. 

\begin{figure}[H]
    \centering
    \includegraphics[width=0.75\linewidth]{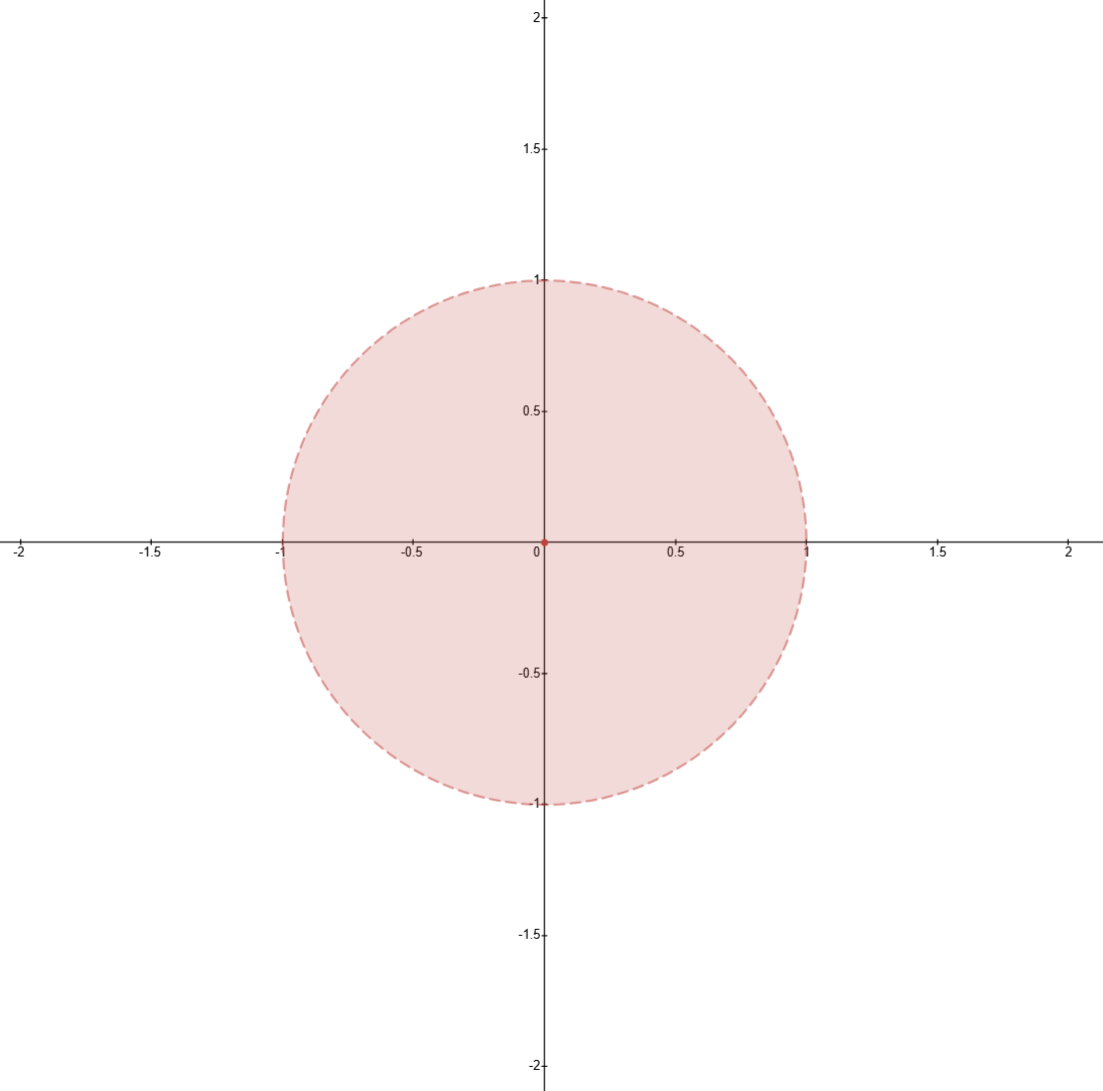}
    \caption{The blob as a point with an exclusion radius of 1.}
    \label{fig:the-blob-as-a-point-with}
\end{figure}

\begin{theorem}[Bressan~\cite{bressan2007}]
\label{thm:upper-bound}
The blob can be contained for any \(\lambda>2\).
\end{theorem}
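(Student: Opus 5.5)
Since only the ratio of rates matters, we place the blob at a single point $O$ with an exclusion radius $r$, and we fix $\lambda > 2$. The strategy is Bressan's non-gambit construction: two walls growing in opposite directions from a starting segment, each bending to hug the fire front, until they meet on the far side.

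\textbf{Step 1: the initial wall.} Place a straight wall $W_0$ along the line $y = -r$, tangent to the exclusion disk. Build it symmetrically outward from the point $(0,-r)$ at total rate $\lambda$, so each end advances at speed $\lambda/2 > 1$. The blob reaches the point $(x,-r)$ at time $\sqrt{x^2+r^2} \ge |x|$, while the wall arrives there at time $2|x|/\lambda < |x|$. Hence the wall always outruns the fire along this line. As $x$ grows, the difference between $|x|$ and $2|x|/\lambda$ grows linearly, which gives an increasing time margin. Below the line the blob can only enter around the two ends, so it is confined to the upper half-plane plus regions that leak around the tips.

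\textbf{Step 2: closing the barrier.} Rather than continuing straight forever, after the endpoints have built a sufficient margin we will turn each wall upward. We then follow a curve that stays at a fixed positive time margin ahead of the blob's arrival time. This curve has the following shape:
\begin{itemize}
\item it is a quarter-circle-like arc centred at $O$, at distance $\rho(t)$;
\item equivalently, we can use a large half-square: vertical walls $x = \pm L$ up to height $y = L$, then a top wall $y = L$.
\end{itemize}
The simplest concrete version is the following. Choose $L$ large, and build the bottom segment $[-L, L] \times \{-r\}$ from the centre outward. Then build the sides $\{\pm L\} \times [-r, L']$ upward, and finally the top $[-L, L] \times \{L'\}$ from both corners inward toward the point $(0, L')$.

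\textbf{Step 3: the timing check.} The blob reaches a point $p$ at time $|p|$, since no wall is yet in the way before we close the region. We need the cumulative wall length at the moment a point is built to be at most $\lambda$ times its fire-arrival time. Suppose the walls follow the path $(L,-r) \to (L, L') \to (0, L')$ on each side. The arc length built by the time we reach a point $p$ on the top edge is about
$$2\bigl(L + (L'+r) + (L-x)\bigr).$$
Meanwhile the fire arrival time at $p = (x, L')$ is $\sqrt{x^2+L'^2}$. At $x = 0$ this asks for $2(2L + L') \le \lambda L'$. That fails for a square, and suggests the naive rectangle only works for larger $\lambda$.

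\textbf{Step 4: curved walls (the main obstacle).} The main obstacle is therefore the far side. A sharp analysis needs the wall to be curved: after the initial segment, each tip traces a curve that stays just outside the fire front. Here the front is the set of points at geodesic distance $t$ around the wall tips, and this curve is a spiral-like path. I would parametrize each tip's path so that it moves at speed $\lambda/2$ and keeps a constant angular margin. I would then solve the resulting ODE for the polar distance and show the two tips meet on the positive $y$-axis in finite time. I expect $\lambda > 2$ to be exactly what is needed: each tip must outrun a front that advances at unit normal speed. Being tangent-adjacent, the tip loses only a bounded amount and gains $(\lambda/2 - 1)t$ overall. The two tips then meet before the fire reaches the meeting point, which gives containment.
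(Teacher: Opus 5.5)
Your proposal has a genuine gap: nothing in it actually contains the blob. Step~1 never closes, and Steps~2--3 end with your own correct conclusion that the rectangle fails. Step~4, where the theorem has to be proved, is only announced: you say you would parametrize the tips, solve an ODE and show the arms meet, but none of this is carried out, and ``constant angular margin'' does not determine the curve.

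The missing step is short. Fix the heading of each tip at angle $\arccos(2/\lambda)$ from the radial direction. A tip moving at speed $\lambda/2$ then has radial speed exactly $(\lambda/2)(2/\lambda)=1$. Its distance $\rho(t)$ from $O$ therefore grows exactly like the fire's, so it stays on (or, after your Step~1, a fixed margin ahead of) the front. Its tangential speed is $\sqrt{\lambda^2/4-1}>0$. Because the tips are never overrun, the relevant front is just the circle of radius $t$, not a geodesic front around the tips, and this is what makes the ODE explicit. It reads $\rho\dot\theta=\sqrt{\lambda^2/4-1}$ with $\rho(t)=\rho_1+(t-t_1)$, so $\theta(t)-\theta(t_1)=\sqrt{\lambda^2/4-1}\,\ln\bigl(\rho(t)/\rho_1\bigr)$. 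Each arm is thus a logarithmic spiral $\rho=\rho_1e^{k(\theta-\theta_1)}$ with $k=\tan(\pi/2-\arccos(2/\lambda))$, and its polar angle is unbounded. The two symmetric arms therefore reach the symmetry axis and close at a finite time, and the wall used by time $t$ never exceeds $\lambda t$. This is exactly the paper's argument, which starts two such spirals from a point of the front at time $\delta$. Your initial straight segment matches the paper's improved variant with a first straight wall; it is harmless but not needed.

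Your stated reason why $\lambda>2$ suffices is also not the right mechanism. Outrunning the front, i.e.\ ``gaining $(\lambda/2-1)t$,'' is easy and has nothing to do with closure. A tip that spends its surplus speed moving outward keeps its polar angle and never wraps around the fire. Staying ahead only requires radial speed at least $1$. Closing requires the leftover tangential speed $\sqrt{\lambda^2/4-1}$ to generate unbounded angle. Since the angular velocity decays like $1/t$, this rests on the divergence of $\int dt/t$. That is why the construction works for every $\lambda>2$, with closing time blowing up as $\lambda\downarrow 2$ (because $k\to\infty$), and degenerates at $\lambda=2$, where the tangential component vanishes.
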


\begin{proof}
We give the following ``teardrop" method: the blob may be contained with \(\lambda=2+\epsilon\) for any \(\epsilon>0\).

We begin by doing nothing until time \(t=\delta\), where \(\delta>0\) is the exclusion radius. 

\begin{figure}[H]
    \centering
    \includegraphics[trim=10cm 10cm 10cm 10cm, clip=true, width=0.4\linewidth]{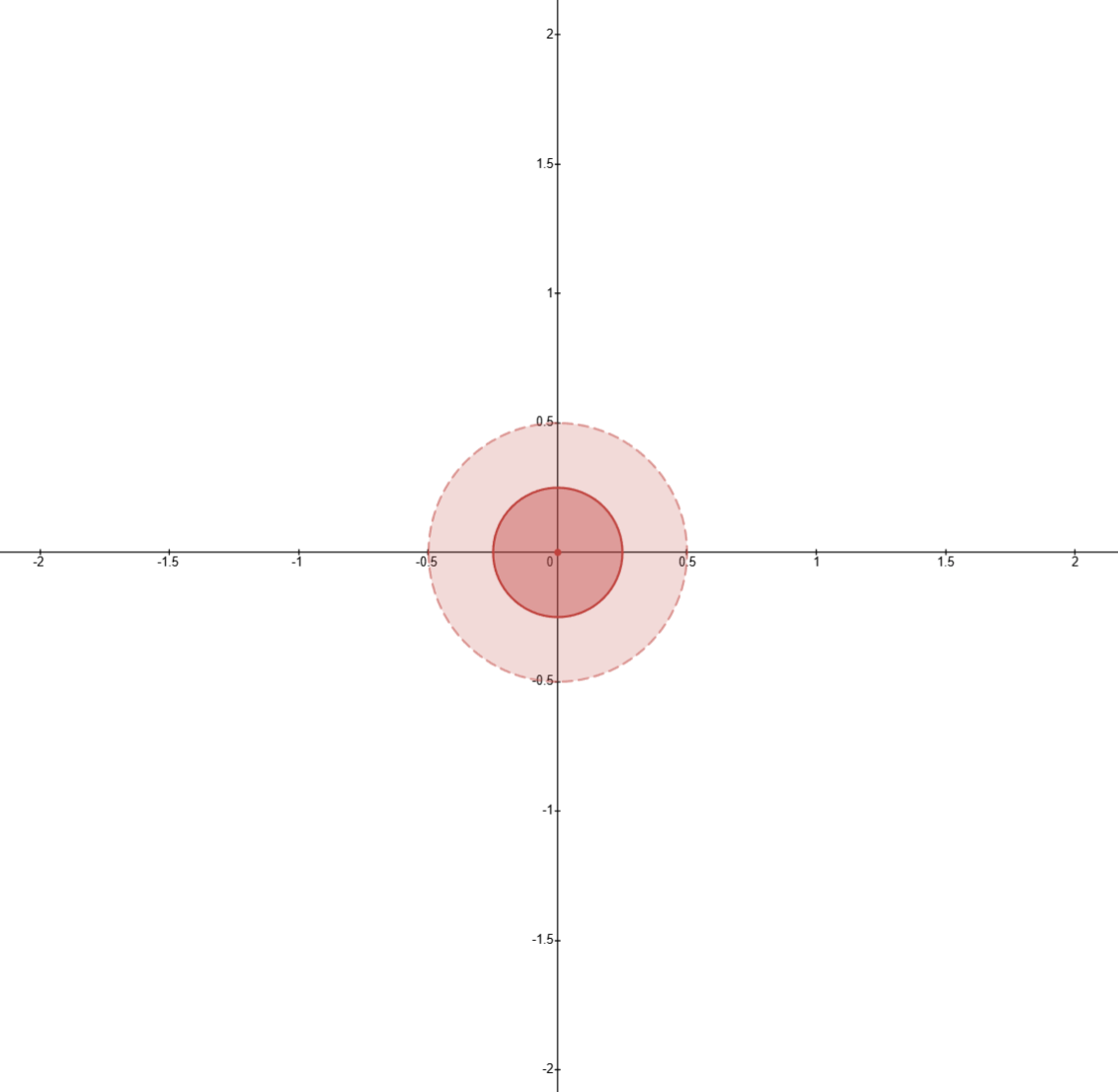}
    \caption{The blob at time \(t=0.25\).}
    \label{fig:the-blob-at-time}
\end{figure}

At time \(t=\delta\), we then begin drawing out in both directions (clockwise and counterclockwise) from the rightmost point of the blob at an angle from the origin of \(\arccos(2/\lambda)\). This angle gives a distance of \(\lambda/2\) per second on the hypotenuse versus one unit per second on the radial line, which is thus possible as the blob therefore touches the wall at a rate of \(\lambda/2\) units per second, which is permissible. This means that we draw along the outside of the blob at all times, and that we may do so indefinitely. 

These walls form two logarithmic spirals with angle coefficient \(k=\tan(\pi/2-\arccos(2/\lambda))\). 

\begin{figure}[H]
    \centering
    \includegraphics[trim=5cm 5cm 5cm 5cm, clip=true, width=0.6\linewidth]{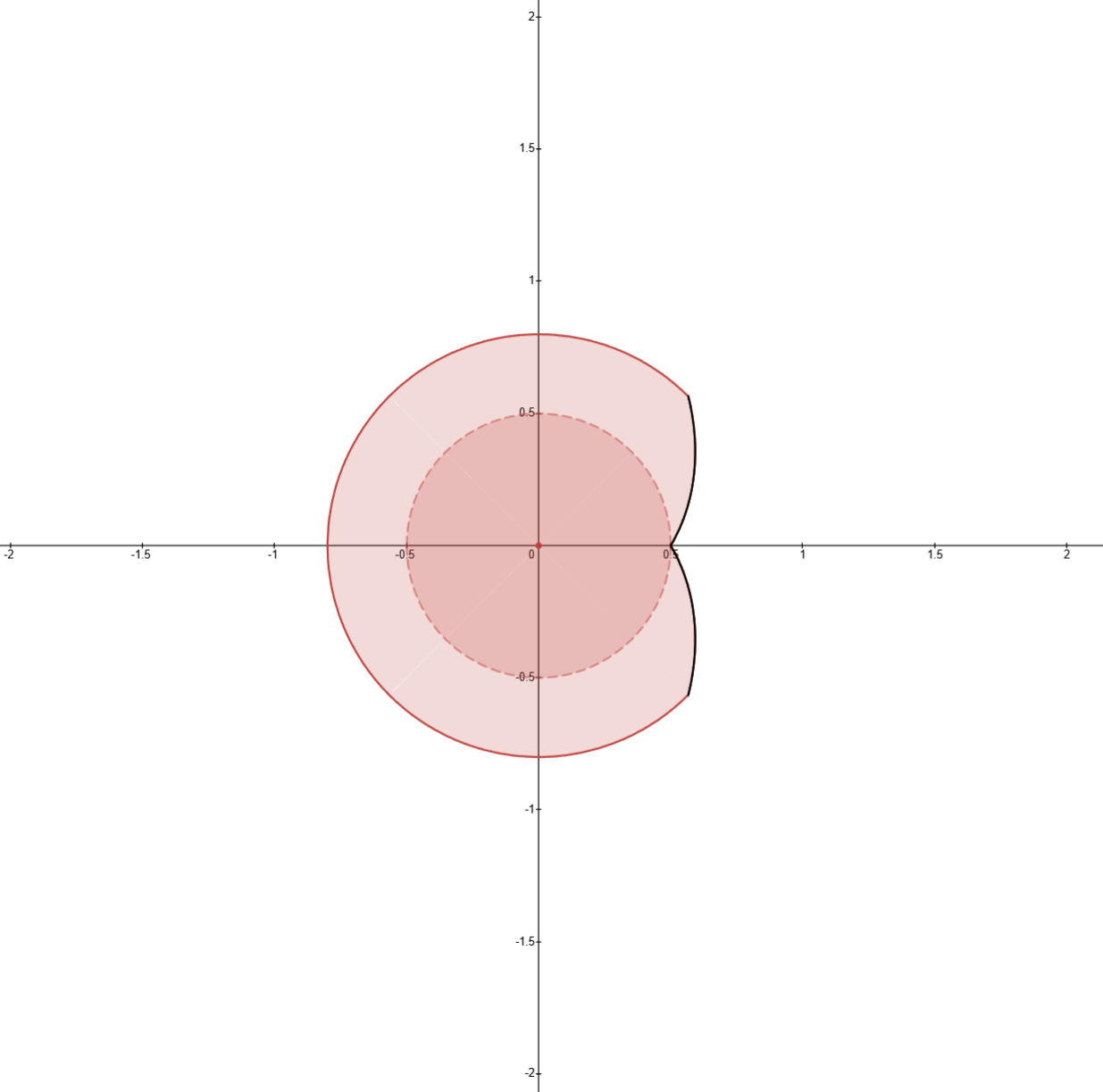}
    \caption{The blob and the walls which touch it partway through containment. }
    \label{fig:the-blob-and-the-walls-which}
\end{figure}

This then continues until we contain the blob on the negative \(x\)-axis. 

\begin{figure}[H]
    \centering
    \includegraphics[width=0.75\linewidth]{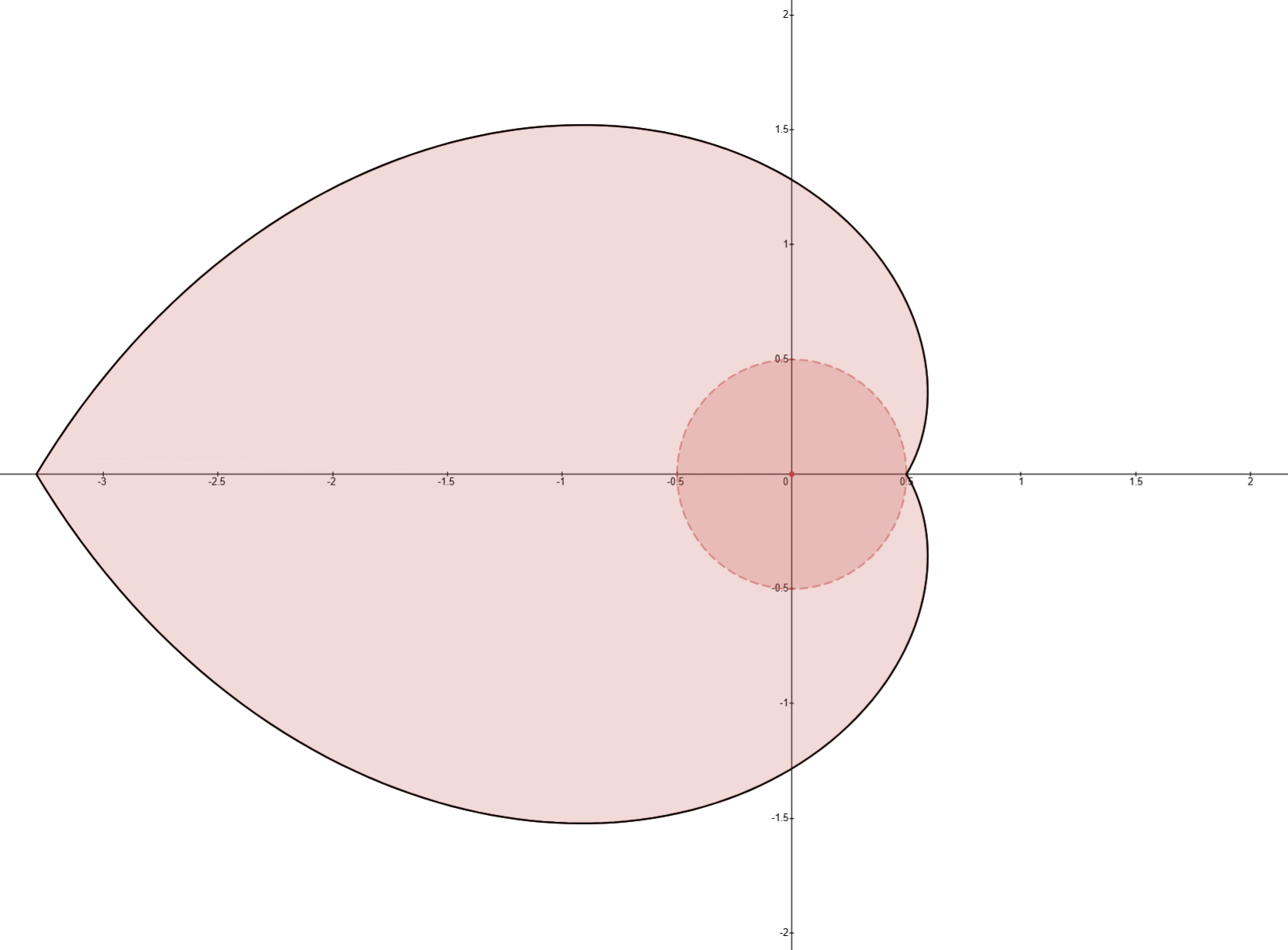}
    \caption{The blob contained by the ``teardrop" method.}
    \label{fig:the-blob-contained-by-the-teardrop}
\end{figure}

As \(\lambda\to 2\), the teardrop shape becomes more and more linear. 

\begin{figure}[H]
    \centering
    \includegraphics[width=0.75\linewidth]{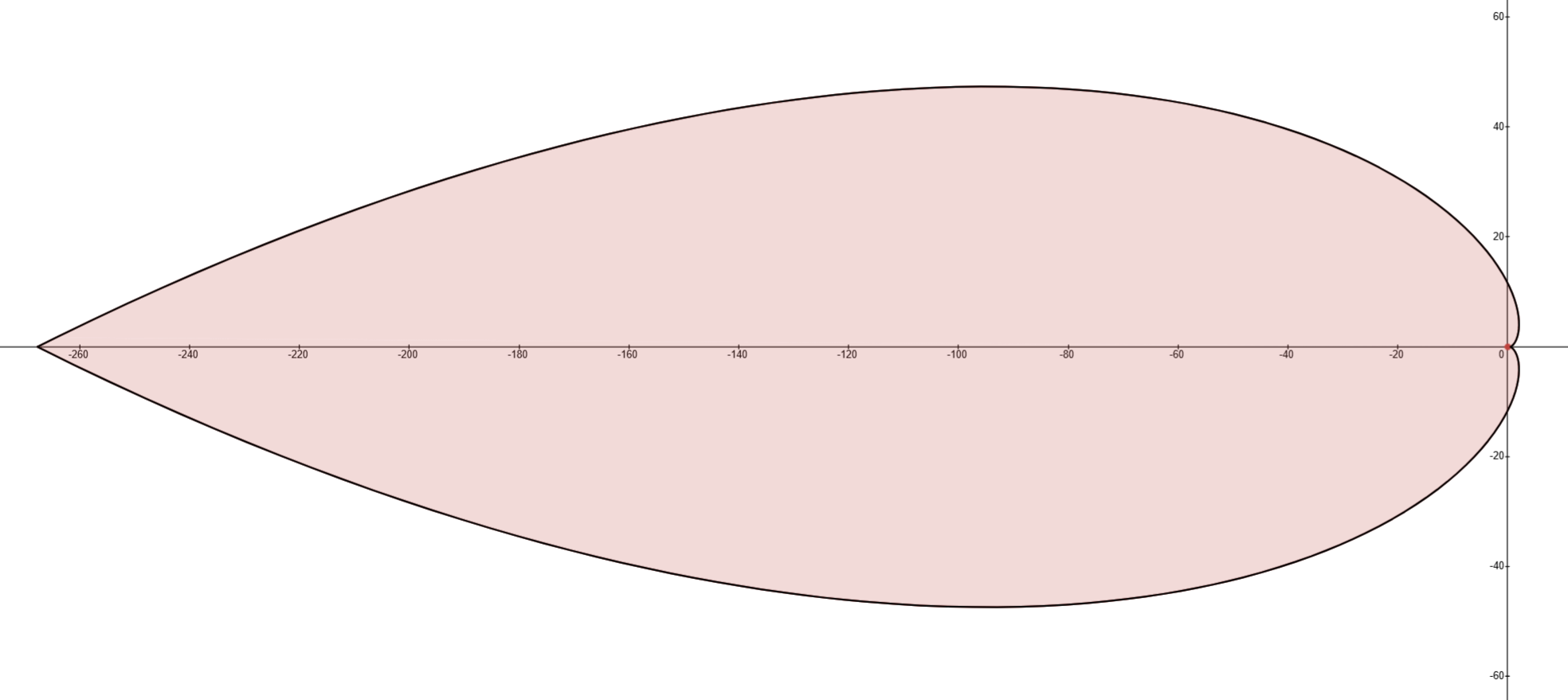}
    \caption{The blob contained by the ``teardrop" method with a \(\lambda\) fairly close to 2.}
    \label{fig:the-blob-contained-by-the-teardrop-2}
\end{figure}

And for \(\lambda\gg2\), the shape becomes nearly circular. 

\begin{figure}[H]
    \centering
    \includegraphics[width=0.6\linewidth]{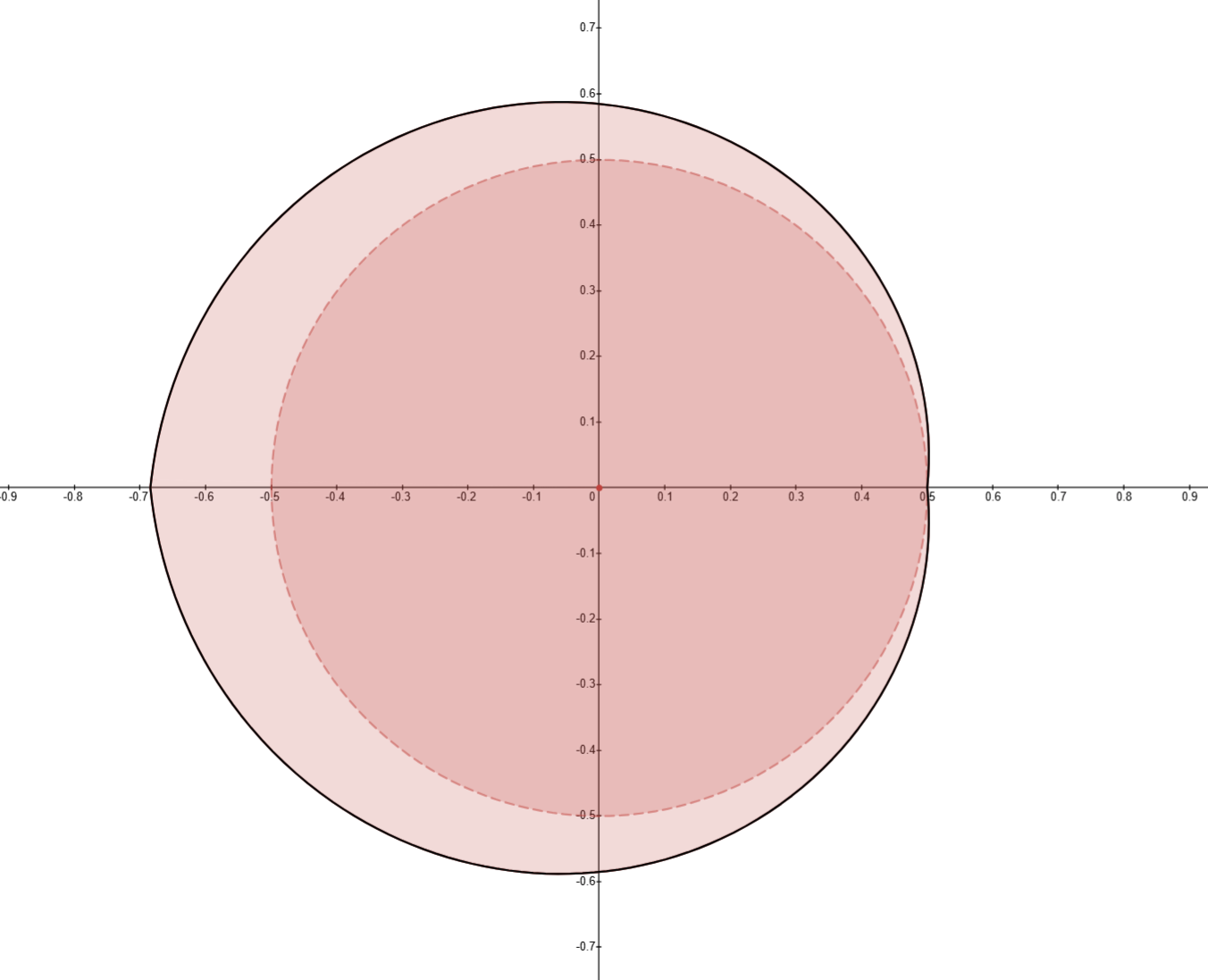}
    \caption{The blob contained by the ``teardrop" method with a \(\lambda\) much larger than 2.}
    \label{fig:the-blob-contained-by-the-teardrop-3}
\end{figure}

This then places the best known upper bound for the critical \(\lambda\) as 2.
\end{proof}

However, this is not the best known strategy for \(\lambda\geq 2\). The best known strategy does not sit idle until time one, and instead begins building a vertical wall at the furthest right point of the exclusion radius immediately.

\begin{figure}[H]
    \centering
    \includegraphics[width=0.6\linewidth]{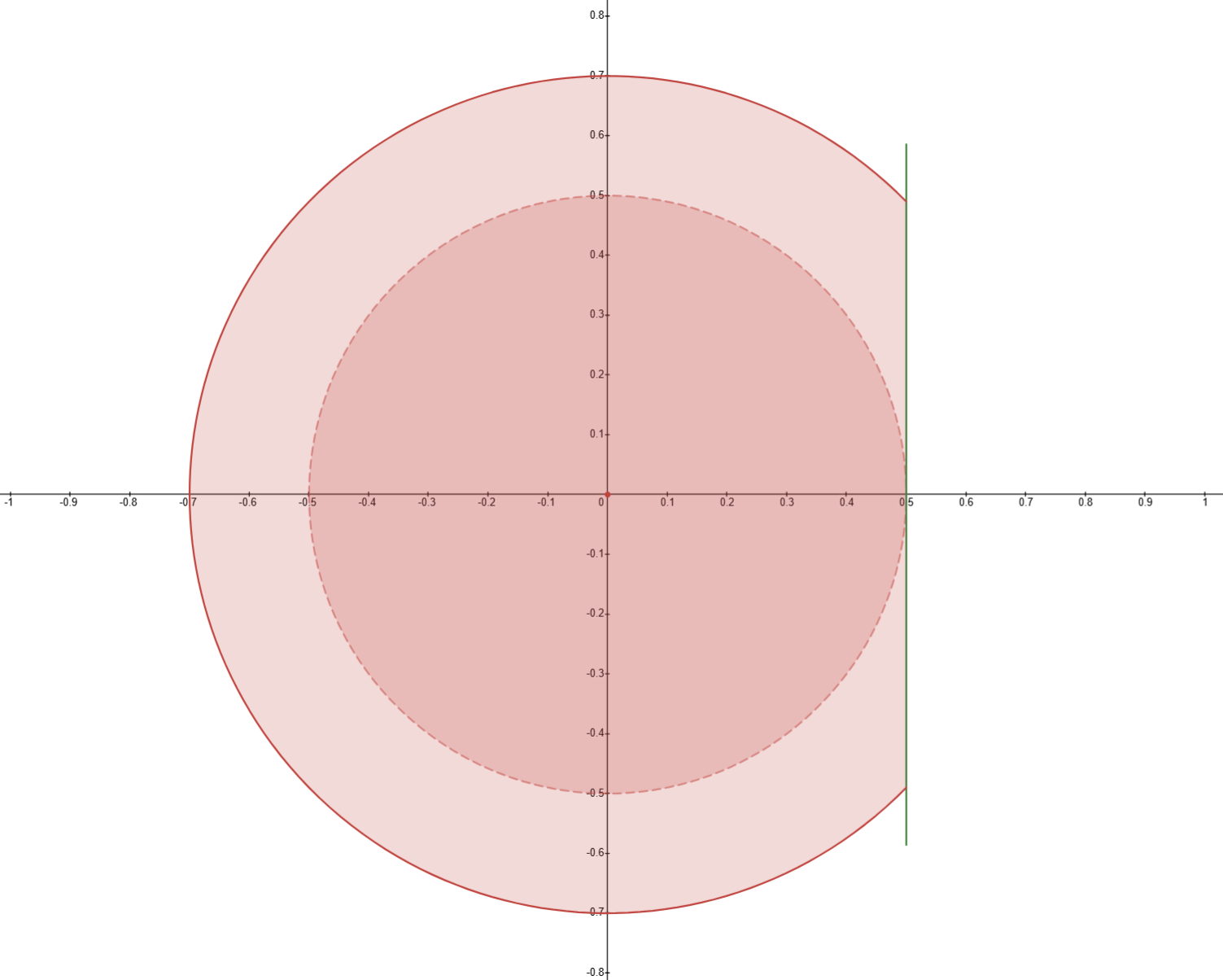}
    \caption{The blob during the construction of this first wall.}
    \label{fig:the-blob-during-the-construction-of}
\end{figure}

Once the angle from the wall to the center becomes \(\arccos(2/\lambda)\), we begin drawing our two spirals as before. 

\begin{figure}[H]
    \centering
    \includegraphics[width=0.75\linewidth]{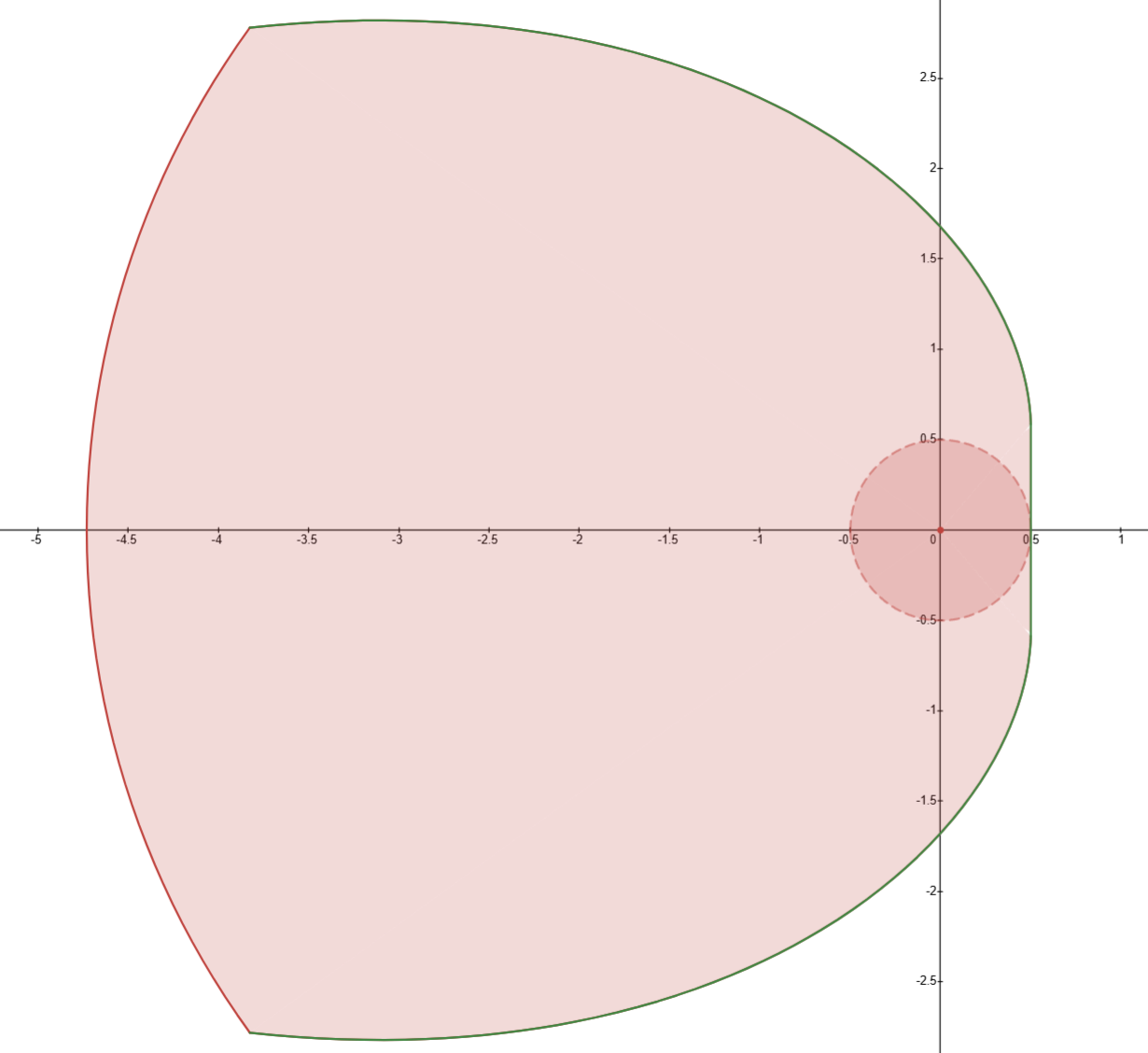}
    \caption{The blob during the construction of the spirals.}
    \label{fig:the-blob-during-the-construction-of-2}
\end{figure}

Containment happens significantly faster and with much less required area than with the previous method. 

\begin{figure}[H]
    \centering
    \includegraphics[width=0.75\linewidth]{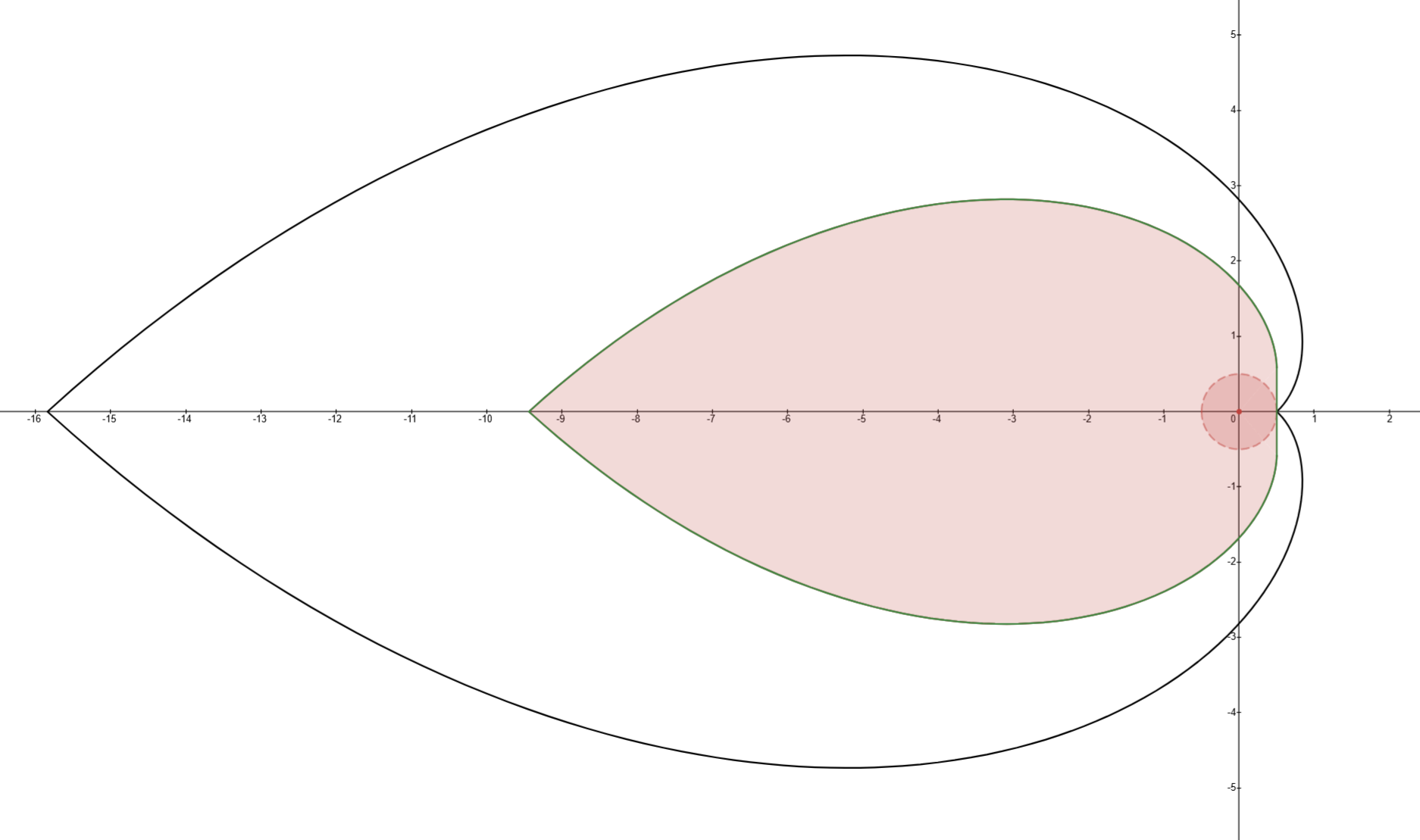}
    \caption{The blob at containment with the new strategy (in green). The old strategy is shown in black for the same \(\lambda\).}
    \label{fig:the-blob-at-containment-with-the}
\end{figure}

While a slightly stronger strategy does exist in this slightly weakened case of the blob, this is the best-known strategy that works on the original problem. With the Blob starting at radius 1, the vertical wall must be drawn at a distance of \(x=\frac{\cos(\theta)\lambda}{\lambda-2\tan(\theta)\cos(\theta)}\), where \(\theta\) is the angle where the spiral goes perfectly vertical. This means that this is the closest distance where we could draw a vertical wall that would reach the angle where the logarithmic spiral goes vertical, meaning we can then create the log spiral structure which contains the blob.

No further improvement has been made on the upper bound for \(\lambda_C\) in the general problem. (For barriers restricted to a single self-closing logarithmic spiral, as opposed to the more general two-armed construction above, the sharp threshold is known to be \(\lambda\approx2.6144\)~\cite{kleinlangetepelevcopoulos2015,bianchinizizza2025}. We do not use that restricted result here, though we did independently arrive at the same threshold.)

\section{The Vine}
\label{sec:vine}

We hereby define the problem known as the \(n\)-vine. 

The \(n\)-vine, similar to the blob, begins as a single point at the origin of 2D space, and has a small exclusion radius of arbitrary positive length around it. 

\begin{figure}[H]
    \centering
    \includegraphics[width=0.75\linewidth]{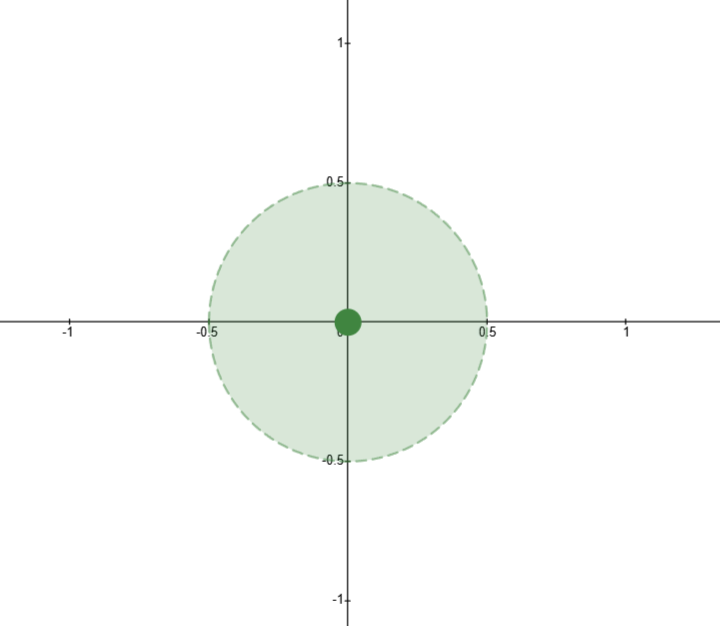}
    \caption{The vine at time zero.}
    \label{fig:the-vine-at-time-zero}
\end{figure}

It may, at any given time, be propagating up to \(n\) curves from any point of the vine, which grow at a rate of 1 unit per second. These curves are referred to as tendrils. Tendrils may be started and stopped at the vine's will. 

\begin{figure}[H]
    \centering
    \includegraphics[width=0.75\linewidth]{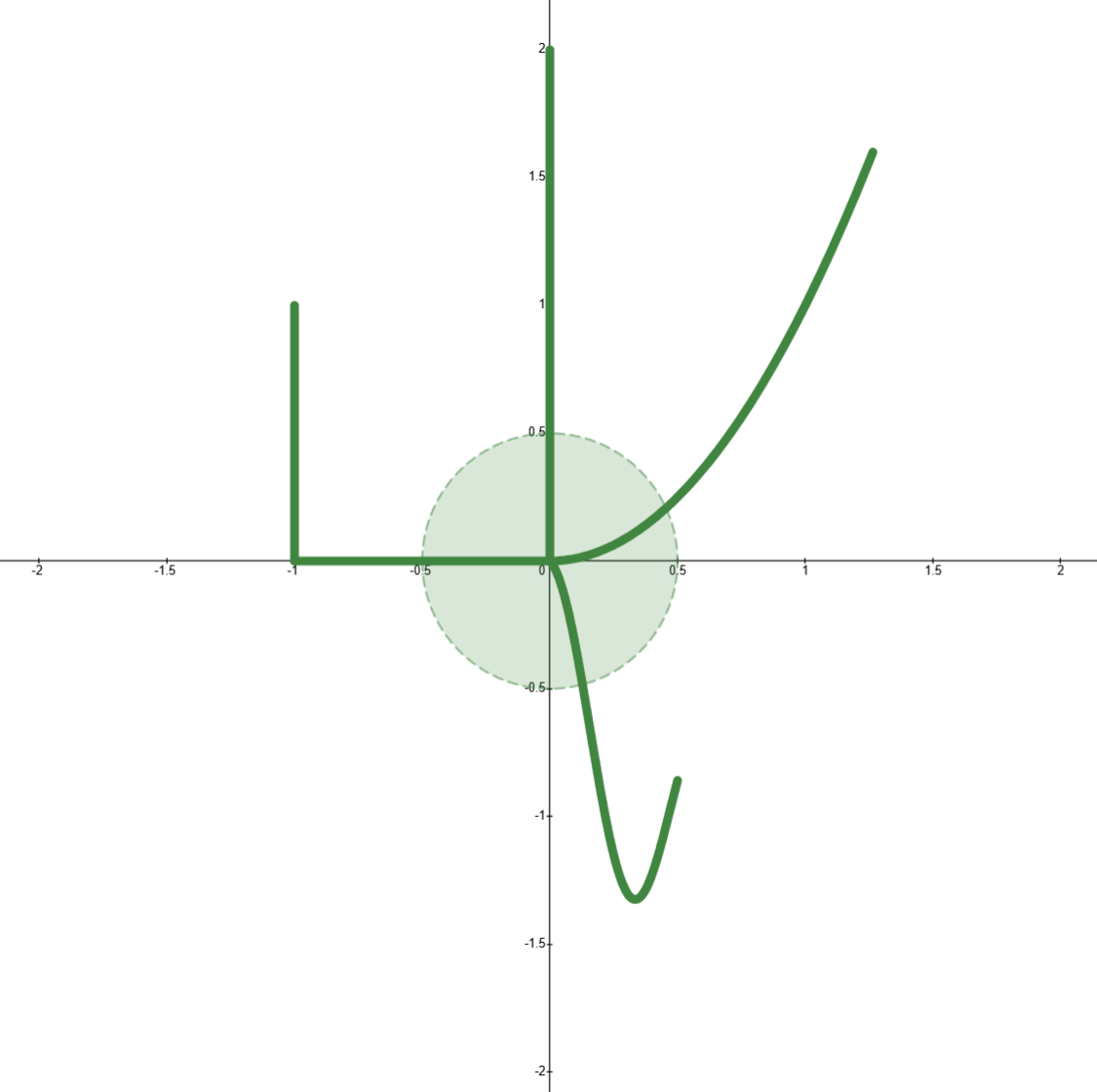}
    \caption{A potential 4-vine at time 2.}
    \label{fig:a-potential-4vine-at-time-2}
\end{figure}

At the same time, we may build walls at a rate of \(\lambda\) units per second to contain the vine, which prohibit the motion of tendrils. However, if the vine touches a wall at any point, the vine will creep along the wall in both directions at a rate of 1 unit per second, eventually enveloping it if left alone. This does not occupy or consume the tendril, so the \(n\)-vine can be expanding at a rate of 1 unit per second with up to \(n\) tendrils and at an arbitrarily high number of points on walls. The points on walls along with the vine creeps we refer to as ``fronts". 

\subsection{Rigorous Definition of the Vine}
\label{sec:vinedef}

First, we redefine a wall as not only a rectifiable, compact curve, but also as an incision in the topological surface that is \(\R^2\). We also define the set of walls at a time \(t\) as a weakly monotonic function \(W(t)\). We then take the topological space given by the metric space \(\R^2\setminus W(t)\) composed with the shortest-path metric over the set of paths in \(\R^2\setminus W(t)\), and we define the \textit{court} \(C(t)\) to be its closure. 

\begin{figure}[H]
    \centering
    \includegraphics[width=0.75\linewidth]{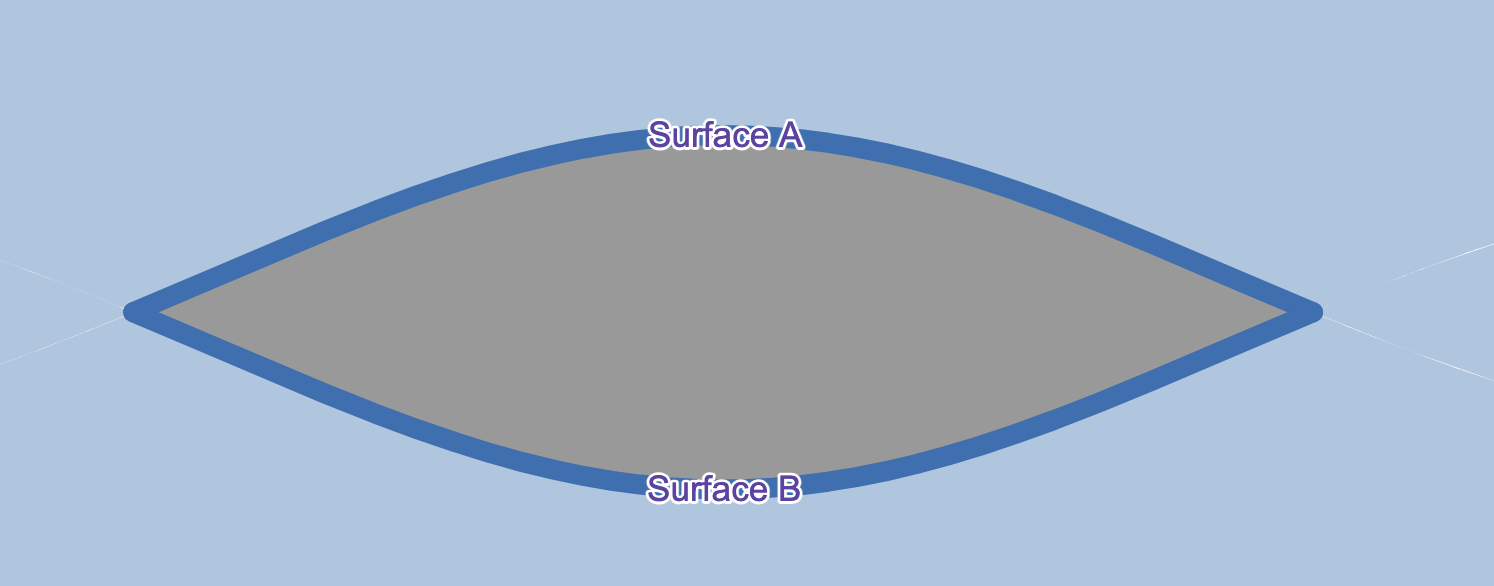}
    \caption{An incision in \(\R^2\) from a wall, and the resulting surfaces from the closure. The incision has been pulled open for clarity in this figure, but in the future we will continue to draw walls as curves and show surfaces only when the vine has crept along that wall.}
    \label{fig:a-cut}
\end{figure}

We define the two \textit{surfaces} of a wall as the two rectifiable curves in \(C(t)\) that are the boundary points created by this incision, which therefore means that the set of surfaces is the set of boundary points of \(\R^2\setminus W(t)\). Note that surfaces are identical in \(\R^2\) to the wall which produced them. Surfaces are adjacent to at most two other surfaces, as the endpoints of a wall will be contained in at most two different surfaces and each surface contains at most two endpoints. Finally, we define the set of surfaces at a time \(t\) as a weakly monotonic function \(S(t)\). This therefore means that, as a set, \(C(t)=(\R^2\setminus W(t))\cup S(t)\).

Now, we define the vine as the set of points on \(C(t)\) given by a function \(V(P, t)\) of time \(t\) and a \textit{plan} \(P\). \(V(P,t)\) is also weakly monotonic, and is constructed through two properties: 

First, the creeping property. If, at a time \(t\), \(V(P,t)\) contains a point \(p\) on a surface, then, for all \(x\geq 0\), \(V(P,t+x)\) contains all points on that surface within length \(x\) of \(p\). 

Second, the tendril property. For the \(n\)-vine, \(P(k,S_C(t),t)=f_k(t)\), where the integer \(1\leq k\leq n\) is the \(k\)th tendril and \(S_C(t)\) is the set of subsections of surfaces in the vine at time \(t\), giving \(f_k(t)\), the curve of the \(k\)th tendril. \(f_k(t)\subseteq V(P,t)\) for all \(t\), \(f_k(t)\) is a curve on \(C(t)\), \(df_k(t)/dt\in [0,1]\cup \{\infty\}\) for all \(t\), \(f_k\) is right-continuous, and finally, for all \(t\) such that \(df_k(t)/dt=\infty\), \(f_k(t)\in V(P'_{\epsilon,k},t)\) for some \(\epsilon>0\), where \(P'_{\epsilon,k}(j,S_C(u),u)=f_j(u)\) for all \(0\leq u\leq t\) for \(j\neq k\), but \(P'_{\epsilon,k}(k,S_C(u),u)=f_k(\min(u,t-\epsilon))\) for all \(0\leq u\leq t\). 

The containment of the vine is successful if the vine is contained within a closed figure at time \(T\) and if, at all times \(t\), \(L(S_C(t))\leq \lambda t\), where \(L(S)\) for a set of subsections of surfaces \(S\) is the length of wall required for those subsections of surfaces, summing the lengths in \(\R^2\) where, if both surfaces for a wall are in \(S\), they are identical in \(\R^2\) and thus are only counted once in \(L(S)\).

\begin{figure}[H]
    \centering
    \includegraphics[width=1.0\linewidth]{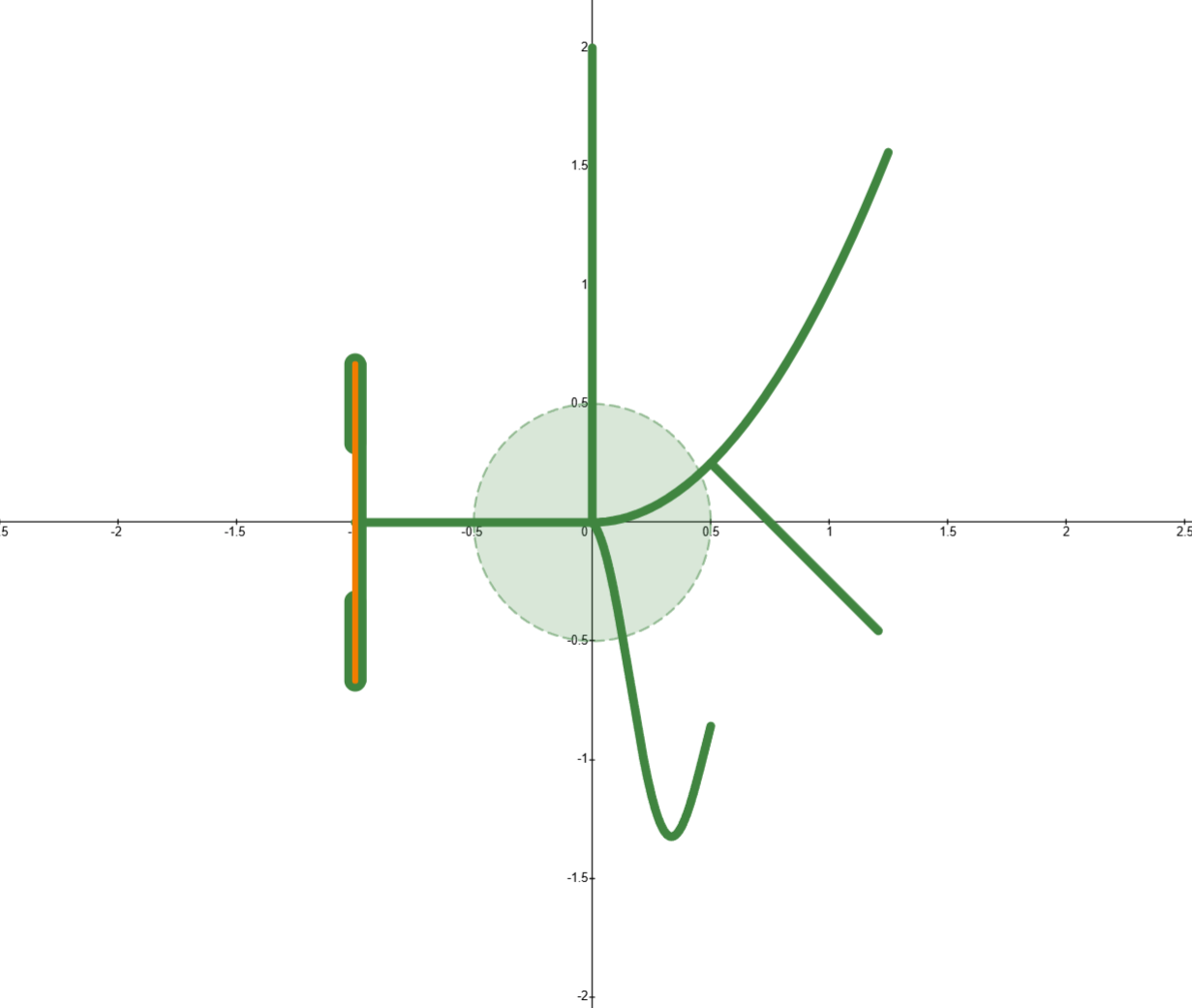}
    \caption{A possible state of the 4-vine at time 2. At this time, all four tendrils are currently expanding, and the vine is creeping along the wall to the left at two points. The tendril which touched the wall was halted, and another tendril was started to replace it as a branch on the eastward tendril.}
    \label{fig:a-possible-state-of-the-4vine}
\end{figure}

We note that, as the function which produces the tendrils, the plan \(P\), takes as parameters only the tendril \(k\), the time \(t\), and the set of surfaces contacted \(S_C(t)\) at time \(t\), the vine is only able to act in accordance with the walls that it has make contact with. 

The vine's motion is thus unchanged by any walls which it has not yet made contact with, and therefore the vine cannot act in a reactive fashion and must instead be fully deterministic based on solely what it has ``seen."

\begin{figure}[H]
    \centering
    \includegraphics[width=0.9\linewidth]{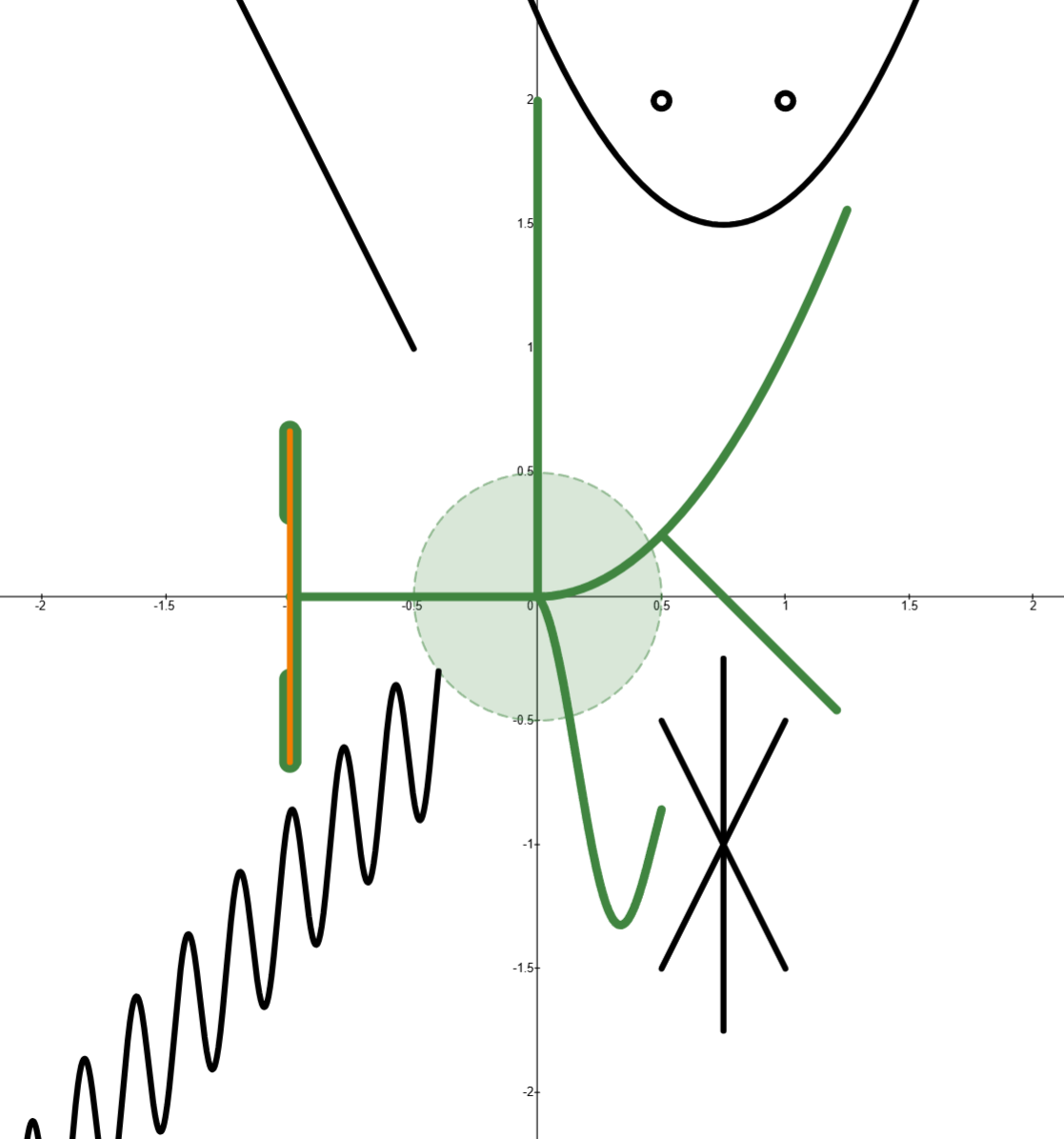}
    \caption{The above 4-vine at time 2, with untouched walls in black. These walls do not affect the vine in any way, as the vine cannot ``see" them. The only wall which the vine can ``see," the wall in orange, is the only wall which may affect the vine's movements until the vine touches another wall.}
    \label{fig:the-above-4vine-at-time-2-2}
\end{figure}

We now introduce a series of lemmas.

\begin{lemma}[Weakening of the Blob]
\label{lma:blob-weakening}
Any solution that contains the blob for some \(\lambda\) must be able to contain the \(n\)-vine for any finite \(n\). 
\end{lemma}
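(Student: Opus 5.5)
The plan is a pointwise domination argument: at every time \(t\), the set of points reached by any \(n\)-vine is contained in the blob at time \(t\). Both the blob and the vine start at the origin with the same exclusion radius. The blob at time \(t\) is the set of points of the court \(C(t)\) at wall-avoiding shortest-path distance at most \(t\) (in the point-start normalisation) from the origin. So it suffices to show that every point of \(V(P,t)\) is joined to the origin by a path in \(C(t)\) of length at most \(t\). Granting this, the lemma follows at once. Fix a wall-building schedule \(W(t)\) that contains the blob. The wall the vine touches at time \(t\) is then a subset of the wall the blob touches, so \(L(S_C(t))\le\lambda t\) holds. Moreover, the closed figure that eventually encloses the blob also encloses the vine. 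Since the walls are built in advance and do not react, the vine's plan \(P\) and its "seeing" restriction play no role beyond guaranteeing that the vine is a well-defined subset of \(C(t)\).

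The key step is the inclusion \(V(P,t)\subseteq B(t)\). I would prove it by an induction, or continuity argument, on the sequence of events that create vine points, using the two generating properties.

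\textbf{Tendril property.} Each tendril \(f_k\) grows with speed at most \(1\) in \(C(t)\), since \(df_k/dt\in[0,1]\). When it branches, it does so from a point already in the vine. The \(\infty\)-speed clause only lets the tendril jump to points that were already in the vine slightly earlier, via \(P'_{\epsilon,k}\). So a tendril tip at time \(t\) lies at path distance at most \((t-s)\) from some vine point present at an earlier time \(s\).

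\textbf{Creeping property.} A point of \(V\) on a surface at time \(s\) spreads along that surface at unit speed. Surfaces are boundary curves of \(C(t)\), so paths along them are admissible paths in the court, and again the new point is within path distance \(t-s\) of an earlier vine point.

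In both cases the triangle inequality for the shortest-path metric gives the following: if every vine point at time \(s\) is within distance \(s\) of the origin, then every point generated by time \(t\) is within distance \(t\). Walls only grow, because \(W\) is weakly monotone. A path that was admissible at an earlier time may later be cut, but the vine's own motion happened inside \(C\) at the time it occurred. So I need a slightly stronger invariant: each vine point \(p\) carries a witness path, traced by the vine itself, of length at most its creation time. Because \(W(t)\) is monotone and tendrils cannot cross walls, that path stays inside the wall-free region at every later time. Hence \(p\) remains in the blob, whose points persist under the same monotonicity.

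The main obstacle is making this induction rigorous. There are continuum-many points and possibly infinitely many start and stop events, and the right-continuity and \(\infty\)-derivative conventions have to be handled. I would handle this with a "first bad time" argument. Let \(t^*=\inf\{t: V(P,t)\not\subseteq B(t)\}\). By right-continuity of the tendrils, and because \(B(t)\) is closed and grows at unit speed, a short time after \(t^*\) every new vine point is within distance \(t-t^*\) of \(V(P,t^*)\subseteq B(t^*)\). That forces inclusion slightly past \(t^*\), which is a contradiction. One further subtlety is the identification of the two surfaces of a wall with a single curve in \(\R^2\). This is what makes \(L(S_C(t))\) count each wall at most once, exactly as the blob's wall-length constraint does, so the budget comparison is immediate.
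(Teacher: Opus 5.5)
Your proposal is correct and follows the same route as the paper. Every vine point is reached by a path of speed at most one that respects the walls present when it was traversed, so the vine lies inside the blob at every time, touches no more wall than the blob, and is enclosed by whatever figure encloses the blob; your witness-path and first-bad-time bookkeeping just makes rigorous the paper's one-line remark about ``accounting for the walls which existed at the time.''

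One small fix concerns your justification for persistence. Since \(W(t)\) only grows, nothing stops a later wall from cutting a witness path, so you cannot claim it stays wall-free. What you actually need, and what the paper uses, is that the blob is defined by such time-respecting paths, so its points persist; this makes the wall-free claim unnecessary. It also means you should not take the static ``distance in \(C(t)\)'' description you started with as the definition of the blob.
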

\begin{proof}[Proof]
As the vine moves exclusively by creeping at a rate of 1 and moving tendrils at a rate of at most 1, each point of the \(n\)-vine at every time \(t\) is within a distance of \(t\) of the origin, accounting for the walls which existed at the time of the path of length \(t\). Therefore, the \(n\)-vine at time \(t\) is always contained within the blob at time \(t\). Therefore, if there is some strategy which contains the blob for some \(\lambda\), the vine cannot touch more wall than the blob can, and thus the strategy also contains the vine for that same \(\lambda\).
\end{proof}

\begin{figure}[H]
    \centering
    \includegraphics[width=1.0\linewidth]{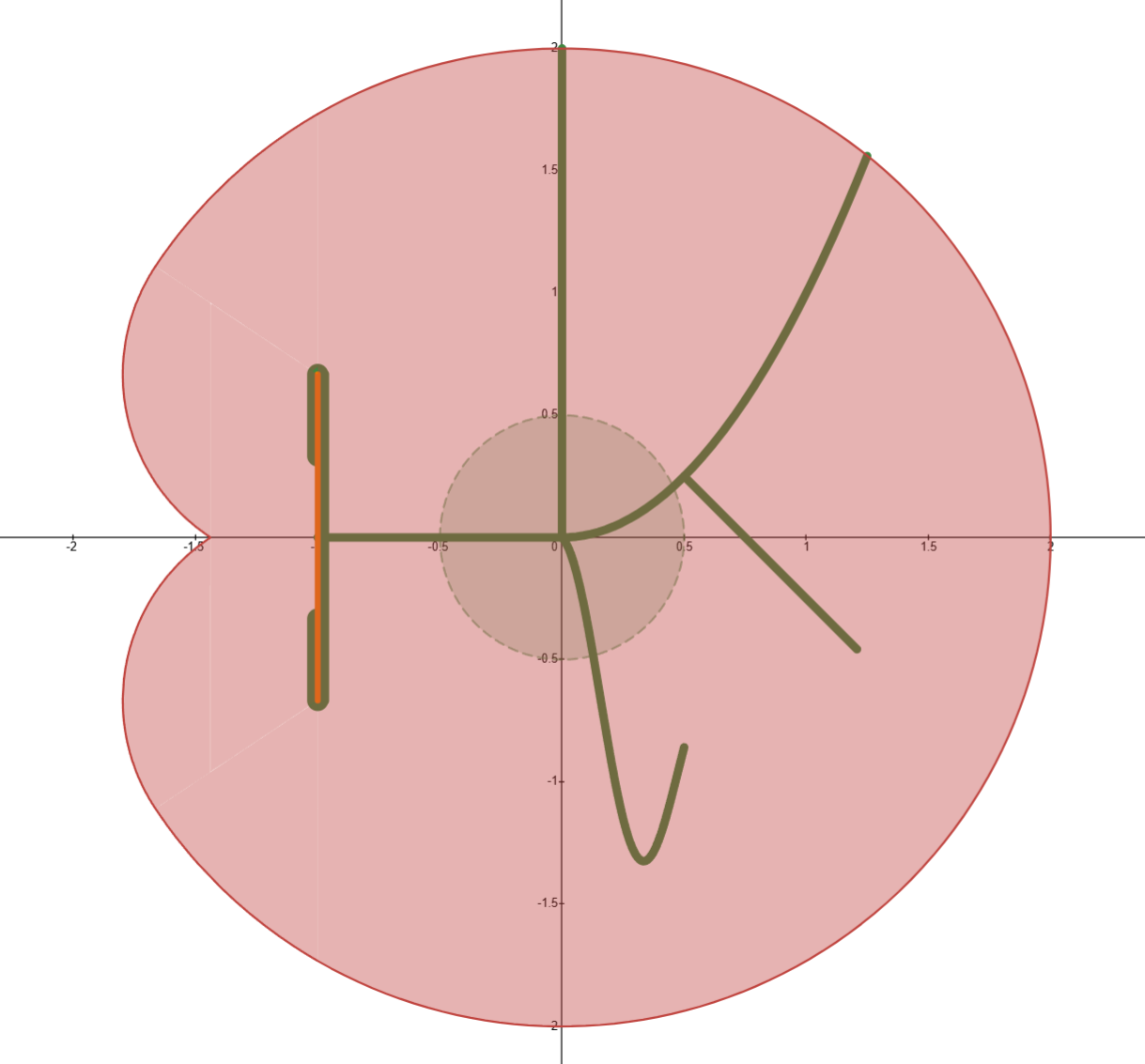}
    \caption{The above 4-vine at time 2, compared to the blob at time 2. It is apparent that the vine is contained within the blob.}
    \label{fig:the-above-4vine-at-time-2}
\end{figure}

\begin{lemma}[Front Erasure]
\label{lma:front-erasure}
Each front can only cease to exist when it runs into another front.
\end{lemma}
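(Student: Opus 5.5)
We must show that a front, i.e. a moving endpoint of the region of a surface already occupied by the vine, can disappear only by meeting another front. The argument rests entirely on the creeping property of Section~\ref{sec:vinedef} and on the monotonicity of $W(t)$, $S(t)$ and $V(P,t)$.

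First fix what a front is. At time $t$, let $A$ be a maximal connected arc of $V(P,t)\cap s$ on a surface $s$. A front is an endpoint of $A$ that is not an endpoint of $s$. Such an endpoint is also not the junction with an adjacent surface that $A$ has already crossed into, since in that case the arc simply continues.

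Next, use the creeping property to locate the front at later times. If $p\in A$ lies at arclength $a$ along $s$, then for $x\ge 0$ the set $V(P,t+x)$ contains every point of $s$ within length $x$ of $p$. So the front at arclength position $a$ is still inside the vine at time $t+x$, and the vine on $s$ reaches at least position $a+x$ in the direction of propagation. The vine is weakly monotonic, and surfaces are never removed because $S(t)$ is weakly monotonic. Hence the maximal arc containing $A$ at time $t+x$ has an endpoint at position $\ge a+x$, or else it has reached the end of $s$ or merged with another arc.

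Now list the ways the front can stop being an endpoint of a maximal arc. Three cases arise.
\begin{enumerate}
\item It reaches the end of $s$. Then it passes to the adjacent surface, of which there is at most one at that endpoint, as noted earlier; this continuation is still the same front. If $s$ ends at a free wall endpoint, it wraps around to the other surface of the same wall, and again the front continues.
\item The arc beyond it gets filled by another part of the vine. A tendril touching $s$ at some point $q$ creates new fronts at $q$ through the creeping property, so our front vanishes only when its arc merges with the arc spreading from $q$. That is, it runs into another front.
\item It is overtaken discontinuously. The vine can only grow by creeping along surfaces or by tendrils moving at rate at most $1$. The only jumps, where $df_k/dt=\infty$, are limits of points already in the vine, so they cannot fill a surface arc ahead of the front without themselves touching $s$ at some point. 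That reduces to case 2.
\end{enumerate}
In the closed curve case, where the surfaces of a closed wall loop form a cycle, the two fronts of a single arc eventually meet each other, which is again an instance of running into another front.

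The main obstacle is the third case: ruling out that a tendril or a newly built wall erases a front without creating a front of its own. Newly built walls only add surfaces, and they never delete points of $C(t)$ that already lie in the vine, because $V$ is monotone. So I would argue that any point of $s$ joining the vine other than by creeping from the front must join through contact, and contact spawns fronts. I would verify this carefully using the right-continuity clause in the tendril property.
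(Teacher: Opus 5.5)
Your proposal is correct and takes essentially the paper's route. The paper's proof notes that the untouched surface ahead of a front can become vine only if vine appears there, and the end of that vine nearest the front is by definition a front moving the opposite way; this is your merging-of-arcs step in case 2, and your extra cases (continuing across surface endpoints, tendril jumps not filling a gap instantly, closed loops) make explicit what the paper's short argument assumes.
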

\begin{proof}[Proof]
A front moving in a direction exists at a point on a surface where, immediately before that point along the surface in that direction, there is no vine. The front will continue to exist until the surface in front of it is touched by the vine. If this is to occur, then there must be a point of the surface touched by the vine immediately before which (in the opposite direction to the original front) there is no vine along the surface. This, therefore, must be a front moving in the opposite direction to the original front. Therefore, if any front ever encounters a point along the surface which is touched by the vine, that point must be another front. 
\end{proof}

We consider the cardinal \(n\)-vine to be the \(n\)-vine with the following strategy: picking \(n\) equally spaced directions, the vine sends out its tendrils along rays in those directions. If the furthest point of the tendril along that direction is blocked from going further by a wall, the vine stops use of that tendril until it can move along that ray again. 

\begin{figure}[H]
    \centering
    \includegraphics[width=0.9\linewidth]{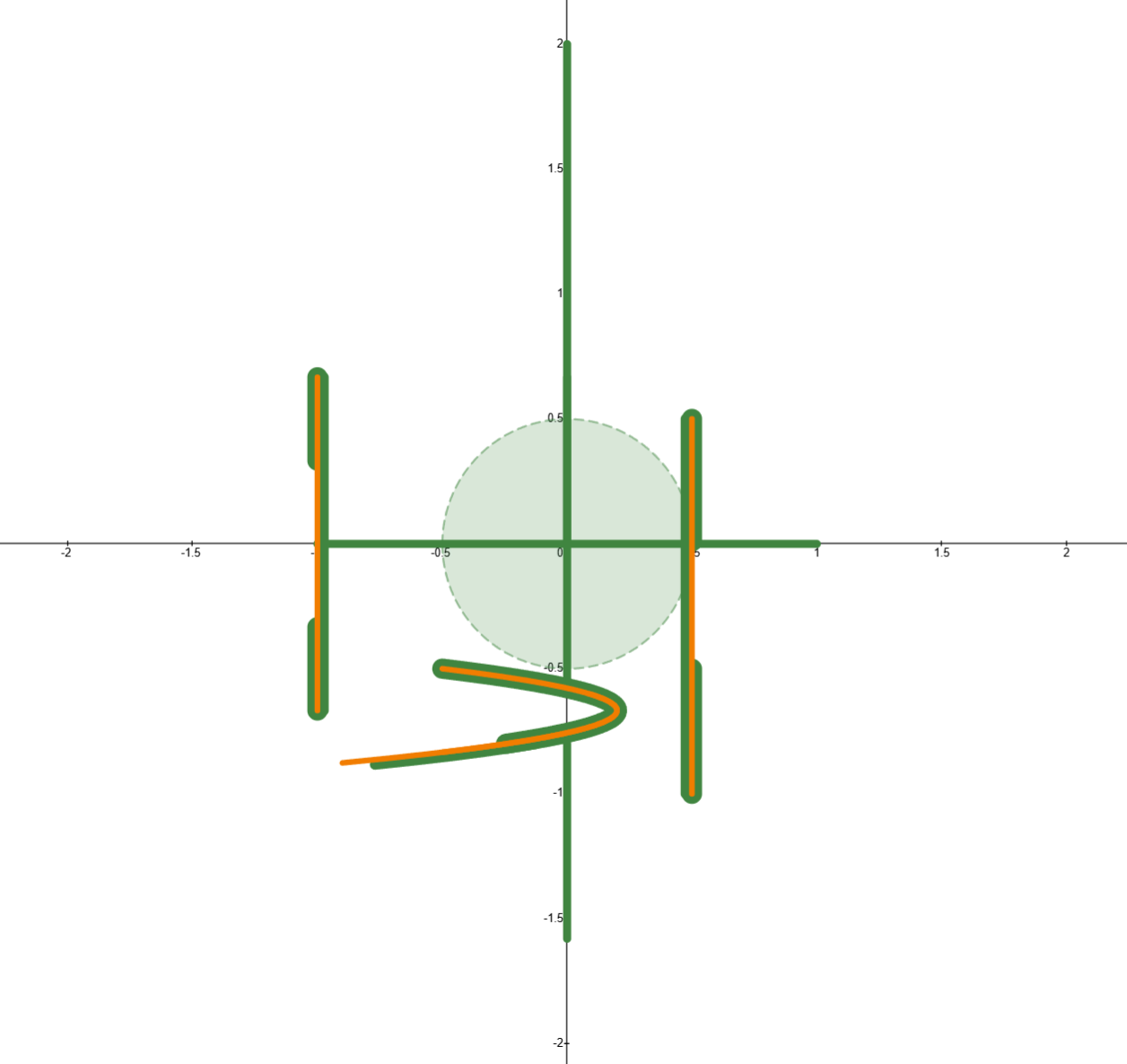}
    \caption{The cardinal 4-vine interacting with some walls at time 2. The north tendril is unobstructed, the east tendril was halted until the vine crept around to the other side of its wall, the south tendril was halted until the vine crept along the curve to the other side, and the west tendril is still halted as the vine has not crept back around to the \(x\) axis yet. Note that the south tendril never moved along the section of the \(y\) axis inside the curve, as by the time the vine crept to the first interior point on the \(y\) axis, it had already crept to the exterior point on the \(y\) axis below it, and thus the tendril began from there.}
    \label{fig:the-cardinal-4vine-interacting-with-some}
\end{figure}

\begin{lemma}[Dimensions of the Cardinal Vine]
\label{lma:dim-cardinal-vine}
Given a cardinal \(n\)-vine with \(n\) equiangular rays from the origin, the distance \(D_r(t)\) from the origin to the furthest point of the vine along a given ray \(r\) at time \(t\) is at least the total free time \(T_{F,r}(t)\) that the tendril has spent not-halted and moving along r at time \(t\). In other words, \(T_{F,r}(t)\leq D_r(t)\) for all \(t\geq 0\). 
\end{lemma}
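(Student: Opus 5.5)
The plan is to compare $D_r(t)$ and $T_{F,r}(t)$ as functions of $t$ and show their difference never decreases. Fix a ray $r$ and set $g(t)=D_r(t)-T_{F,r}(t)$. At $t=0$ the vine is the single point at the origin and the tendril has not yet moved, so $D_r(0)=0=T_{F,r}(0)$ and $g(0)=0$. It then suffices to show that $g$ is non-decreasing. Here $D_r(t)$ means the supremum of distances from the origin of points of $V(P,t)$ lying on $r$.

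The first step handles $T_{F,r}$. By definition, $T_{F,r}$ is a Lebesgue measure of time, so it is non-decreasing. It is also $1$-Lipschitz, increasing at rate exactly $1$ on the set $F$ of times when the tendril along $r$ is free and at rate $0$ on its complement. The second step handles $D_r$. The vine $V(P,t)$ is weakly monotonic, so $D_r$ is non-decreasing. On $F$, the tendril along $r$ extends the vine outward along $r$ at rate $1$, by the tendril property with $df_k/dt=1$ chosen by the cardinal strategy. By the definition of the cardinal strategy, the tendril tip is then the furthest point of the vine on $r$ whenever it is moving. Hence on $F$ the upper right Dini derivative of $D_r$ is at least $1$. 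Off $F$ it is at least $0$ by monotonicity.

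Combining these, on each interval of $F$ we have $D_r(t_2)-D_r(t_1)\ge t_2-t_1=T_{F,r}(t_2)-T_{F,r}(t_1)$. On each interval of the complement, $D_r$ does not decrease while $T_{F,r}$ stays constant. Summing over a partition of $[0,t]$ gives $g(t)\ge g(0)=0$. Jumps in $D_r$ are harmless, because they only increase $D_r$. These include the case $df_k/dt=\infty$ and the case where creeping around a wall reaches a point of $r$ farther out, as in the south-tendril example of the figure. Such jumps are exactly why the inequality can be strict.

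The main obstacle is making the partition argument rigorous when the set $F$ of free times is not a finite union of intervals. Halting and restarting could in principle happen infinitely often, since walls are only a finite union of rectifiable curves, but fronts can reach $r$ repeatedly. I would first try to avoid the issue by working directly with the monotone function $g$. For $s<t$, I would show $D_r(t)-D_r(s)\ge |F\cap[s,t]|$ by a Vitali-type covering of $F\cap[s,t]$ by short intervals on which the tendril advances at unit speed. Alternatively, I would invoke the standard fact that a non-decreasing function whose lower derivative is at least $\mathbf 1_F$ almost everywhere dominates $\int \mathbf 1_F$. A secondary subtlety is confirming that the moving tendril tip really realizes $D_r$ at those times. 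This follows because the cardinal tendril only resumes from the furthest reachable point along the ray, never from an interior point.
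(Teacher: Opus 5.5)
You have the same idea as the paper's short proof: $D_r$ never decreases, and the tendril pushes outward at unit speed whenever it is free, so its accumulated advance dominates $T_{F,r}$. Your treatment of a possibly complicated free set $F$, via $h(b)-h(a)\ge\int_a^b h'$ for monotone $h$, is more careful than the paper's. The gap is the step that makes $g=D_r-T_{F,r}$ monotone. You claim that while the tendril on $r$ is moving, its tip is the furthest vine point on $r$, so that $D_r$ itself has Dini derivative at least $1$ on $F$. Nothing in the cardinal strategy guarantees this, and it can fail:
\begin{itemize}
\item A front started by a \emph{different} tendril can creep along a wall that crosses $r$ farther out. It can reach the near face of that crossing while the tendril on $r$ is still held up at a wall closer to the origin. (A prior halt is needed, since an unhalted tendril reaches any point $q$ of $r$ at time $|q|$, and no part of the vine gets there earlier.)
\item $D_r$ then jumps to the crossing. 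Suppose the front cannot quickly get around to the outer face. When the delayed tendril is released, it resumes from the far face of its own blocking wall, behind the furthest vine point, which is blocked.
\item It then moves freely toward the crossing while $D_r$ stays constant.
\end{itemize}
On that interval $g$ strictly decreases, so ``$g$ is non-decreasing'' is false in general. Your remark that the tendril ``only resumes from the furthest reachable point'' does not exclude this.

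The lemma itself survives, and the fix matches the paper's own phrasing (the tendril ``has moved a distance of $x$ away from the origin''). Run your argument on the distance $\rho(t)$ of the tendril's own tip instead of on $D_r$. This $\rho$ has three properties:
\begin{itemize}
\item It is non-decreasing, because a halted tendril restarts at the same point on the far face of its blocking wall or at a farther exterior point, never closer.
\item It grows at unit rate on $F$.
\item It satisfies $\rho(t)\le D_r(t)$, because the tip belongs to the vine and lies on $r$.
\end{itemize}
Your Dini-derivative/Lebesgue argument applied to $\rho$ then gives $\rho(t)\ge|F\cap[0,t]|=T_{F,r}(t)$, hence $T_{F,r}(t)\le D_r(t)$. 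This never requires the tip to be the furthest vine point on $r$.
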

\begin{proof}[Proof]
If the tendril for ray \(r\) is moving, then it must be moving at a rate of 1 unit per second away from the origin along ray \(r\). Therefore, for every \(x\) seconds that the tendril has spent not-halted and moving along \(r\), it has moved a distance of \(x\) away from the origin. And, since the distance of the furthest point of the vine along a ray is a weakly increasing function, and the distance has increased by at least \(T_{F,r}(t)\) at time \(T\), we see that \(T_{F,r}(t)\leq D_r(t)\) for all \(t\geq 0\). 
\end{proof}

\begin{lemma}[Wall Requirements for Containment]
\label{lma:wall-reqs-contain}
Given a state of the vine at time \(t\), and given that the vine will be contained at some time \(T\), the total free surface \(S_F\), surface which has not been touched by the vine, that will exist at time \(T\) is no less than the perimeter \(H(t)\) of the convex hull around the vine at time \(t\). In other words, \(S_F\geq H(t)\) for all \(t\geq 0\). 
\end{lemma}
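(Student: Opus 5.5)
At time \(T\) the vine is enclosed by a closed figure \(F\) made of walls. The plan is to show that the part of this enclosure facing the vine is a closed curve surrounding the convex hull \(K(t)\) of \(V(P,t)\), and that its surfaces are still free. The length bound then follows from the classical fact that any closed curve enclosing a convex set has length at least that set's perimeter.

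\textbf{Step 1: the enclosing curve surrounds the hull.} Since \(V(P,t)\subseteq V(P,T)\) by weak monotonicity, every point of the vine at time \(t\) lies inside the region bounded by \(F\). Take the boundary component \(\Gamma\) of the region containing the vine that separates it from infinity. \(\Gamma\) is a closed rectifiable curve, made of surfaces of walls in \(W(T)\), and its interior region \(R\) contains the whole vine. We then pass to the convex hull of \(\Gamma\). Any closed curve enclosing \(V(P,t)\) has a convex hull containing \(K(t)\). Since convex sets have monotone perimeter under inclusion, and projection onto a convex set is \(1\)-Lipschitz, the length of \(\Gamma\) is at least the perimeter of \(\mathrm{conv}(\Gamma)\), which is at least \(H(t)\). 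This is the Cauchy/Crofton-type inequality, which I would quote: each line meeting \(K(t)\) meets \(\Gamma\) at least twice.

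\textbf{Step 2: the surfaces of \(\Gamma\) facing \(R\) are free at time \(T\).} This is the crux and the main obstacle. If the vine had touched any point of a surface on the inner side of \(\Gamma\), the creeping property would spread the vine along that surface. Lemma~\ref{lma:front-erasure} then says fronts only die by meeting other fronts, so the vine would keep advancing along \(\Gamma\).

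This does not by itself give a contradiction, because a vine creeping along an enclosing wall can still be contained. So I would reinterpret \(S_F\) as the total surface length counted up to time \(T\), including surface the vine may touch afterwards. The natural route is to note that, before containment is declared, the bounding surfaces facing the vine must all exist. Since the vine is still growing, the portion already touched is counted in \(L(S_C)\), and the rest is free. I would try proving the inequality for "surface existing at \(T\) on \(\Gamma\)" and then argue that the touched portion can be shrunk. For this, compare with an earlier time \(t'\in[t,T]\) at which the vine first touches \(\Gamma\); at that time the whole of \(\Gamma\) is free and already exceeds \(H(t)\) by Step 1. If the statement is intended at the final moment, I expect this point to need an additional approximation argument, namely taking \(\Gamma\) just before the vine meets it.

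\textbf{Step 3: conclusion.} Combining the two steps gives \(S_F\geq L(\Gamma)\geq H(t)\). Repeating the argument for every \(t\le T\) yields the inequality for all \(t\geq0\); for \(t>T\) the vine is already enclosed and the statement is vacuous or trivial.
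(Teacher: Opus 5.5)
\textbf{Gap: Step 2 uses the wrong side of the wall.} Your Step 1 is fine. Step 2, however, tries to show that the surfaces of \(\Gamma\) \emph{facing} the vine's region \(R\) are untouched at time \(T\). That is false in general. Containment only requires the vine to be enclosed, not kept off the enclosure. By time \(T\) the vine may well have crept along the inner side of \(\Gamma\); the paper's containment strategy for the cardinal \(2\)-vine ends with exactly this, returning the wall ``vine creeping along the interior''.

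Your proposed repairs do not close the gap:
\begin{itemize}
\item Redefining \(S_F\) to include surface touched later changes the statement.
\item The first-contact time \(t'\) argument fails for two reasons. At time \(t'\) the curve \(\Gamma\) need not be built yet, since walls are added over time, so Step 1 does not apply to what exists at \(t'\). And surface that is free at \(t'\) may be touched by time \(T\), so nothing about \(S_F\) at time \(T\) follows.
\end{itemize}

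\textbf{The missing idea} is to use the \emph{outward}-facing surfaces of \(\Gamma\). These lie on the boundary of faces of \(C(T)\) other than \(R\). The vine is confined to \(R\) and no path crosses a wall, so no point of the vine lies on them. Hence they are free at time \(T\), and their total length is \(L(\Gamma)\). Your Step 1, together with \(V(P,t)\subseteq V(P,T)\), then gives \(S_F\geq L(\Gamma)\geq H(T)\geq H(t)\) for \(t\leq T\). This is exactly the paper's argument: at least one side of the enclosing wall must be untouched, because there is no vine on the outside of the figure. With the sides swapped, no approximation argument or reinterpretation of \(S_F\) is needed.
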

\begin{proof}[Proof]
If the vine is to be contained at time \(T\), then all points of the vine must be contained inside a closed figure. The outside of the figure must, therefore, be both untouched by the vine and also form a closed figure which contains all points of the vine. This closed figure has length at least the length of the convex hull around the points of the vine at time \(T\), so the closed figure has wall length at least \(H(T)\). Moreover, as the vine is ever-expanding, we have that \(H(T)\geq H(t)\) for all \(0\leq t\leq T\). Therefore, the closed figure has wall length at least \(H(t)\). 

Therefore, as at least one side of the wall must be untouched by the vine, as there must be no vine on the outside of the figure, we see that the total length of free surface \(S_F\) must be at least \(H(T)\) and thus at least \(H(t)\). 
\end{proof}

\begin{lemma}[Wall Requirements for Containment on Ray Motion]
\label{lma:wall-reqs-ray-motion}
Given a state of the cardinal \(n\)-vine at time \(t\), with rays \(r_1,r_2,\dots,r_n\) and furthest points \(p_{r_1}(t)\) along each ray at time \(t\), and given that the vine will be contained at some time \(T\), the total free surface \(S_F(T)\) is no less than the perimeter \(H(\{p_{r_1}(t),p_{r_2}(t),\dots,p_{r_n}(t), o\})\) of the convex hull  around those furthest points at time \(t\) and the origin. In other words, \(S_F(T)\geq H(\{p_{r_1}(t),p_{r_2}(t),\dots,p_{r_n}(t), o\})\) for all \(t\geq 0\). 
\end{lemma}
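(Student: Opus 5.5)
This lemma is a corollary of Lemma~\ref{lma:wall-reqs-contain}; the plan is to reduce to it by a monotonicity argument on convex hulls. Write \(V(t)\) for the vine at time \(t\), a subset of the court \(C(t)\), and view it in \(\R^2\) through the identification of surfaces with their walls. The only fact about the hull perimeter \(H\) we need is that it is monotone under inclusion: if \(A\subseteq B\subset\R^2\) are bounded, then \(\mathrm{conv}(A)\subseteq\mathrm{conv}(B)\), and the perimeter of a convex set is at most the perimeter of any convex set containing it. We will show that the finite set \(\{p_{r_1}(t),\dots,p_{r_n}(t),o\}\) is contained in \(V(t)\), or at least in its closure. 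Then Lemma~\ref{lma:wall-reqs-contain} gives
\[
S_F(T)\;\ge\; H(t)\;=\;H(V(t))\;\ge\; H(\{p_{r_1}(t),\dots,p_{r_n}(t),o\}).
\]

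The main step is checking that each point is actually in the vine.

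\begin{itemize}
\item For the origin \(o\): the vine begins at \(o\) and \(V(P,t)\) is weakly monotonic, so \(o\in V(t)\) for all \(t\ge 0\).
\item For each furthest point \(p_{r_i}(t)\): by definition it is the furthest point of the vine along \(r_i\), so it lies in the vine, or in its closure if the supremum along the ray is not attained.
\end{itemize}

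Taking closures is harmless. The hull of a bounded set and the hull of its closure have the same perimeter, and \(V(t)\) is bounded because it sits inside the blob by Lemma~\ref{lma:blob-weakening}. If a ray \(r_i\) carries no vine beyond \(o\), we set \(p_{r_i}(t)=o\), and the inclusion still holds.

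I expect the main obstacle to be a subtlety in Lemma~\ref{lma:wall-reqs-contain} itself rather than anything new here. Its hull \(H(t)\) is computed from points of the court \(C(t)\) projected to \(\R^2\). A point of the vine lying on a surface projects to a point of a wall, and could be the outside surface of the eventual enclosing figure, which would make the enclosure argument degenerate. I would first try to dispose of this by noting that the enclosing closed curve must separate the vine from infinity, so every projected vine point is in the closed region bounded by that curve. The length of such a curve is at least the perimeter of the convex hull of the region it encloses. Points on the curve are allowed, so the inequality survives. With that observation, the lemma follows immediately from the hull monotonicity in the first paragraph.
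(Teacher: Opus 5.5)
Your proposal is correct and follows the paper's proof. The origin and the furthest points $p_{r_i}(t)$ lie in the vine at time $t$, so monotonicity of the convex-hull perimeter gives $H(t)\geq H(\{p_{r_1}(t),\dots,p_{r_n}(t),o\})$, and Lemma~\ref{lma:wall-reqs-contain} finishes; your extra care about closures, degenerate rays, and vine points lying on surfaces only fills in details the paper leaves implicit.
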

\begin{proof}[Proof]
Those \(n+1\) points are points that will be contained inside of the vine at time \(T\). Therefore,  \(H(t)\geq H(\{p_{r_1}(t),p_{r_2}(t),\dots,p_{r_n}(t), o\})\) for all \(t\geq 0\). Therefore, by Lemma~\ref{lma:wall-reqs-contain},  \(S_F(T)\geq H(\{p_{r_1}(t),p_{r_2}(t),\dots,p_{r_n}(t), o\})\) for all \(t\geq 0\). 
\end{proof}

\begin{lemma}[Creeping Exclusivity]
\label{lma:creeping-exclusive}
A surface can only be crept on once. 
\end{lemma}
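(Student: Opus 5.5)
The plan is to read ``crept on once'' as: every point of a surface is reached by the creeping process at most once. Equivalently, once a point of a surface lies in the vine it is never re-traversed by a front. Any later creeping on that surface must therefore cover fresh, previously untouched surface. The argument combines the weak monotonicity of \(V(P,t)\) with Lemma~\ref{lma:front-erasure}.

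First I will fix a surface \(s\in S(t)\). Since \(S(t)\) is weakly monotonic and a surface is a rectifiable curve, I will parametrize \(s\) by arc length. At any time \(t\), \(V(P,t)\cap s\) is then a union of subarcs. The creeping property forces each such subarc to grow at both of its non-endpoint ends at unit rate. Those growing ends are precisely the fronts.

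Second, I will show that a front only ever advances into points not already in the vine. By definition, a front moving in a given direction sits at a point immediately ahead of which, along \(s\) in that direction, there is no vine. Suppose the front advanced onto a point that was already in the vine. The first such point encountered would be the boundary of another vine subarc, hence a front moving in the opposite direction. By Lemma~\ref{lma:front-erasure}, the two fronts annihilate there, and neither continues. Combining this with monotonicity (\(V(P,t)\subseteq V(P,t+x)\)), each point of \(s\) passes from ``free'' to ``crept'' exactly once and never reverts. This gives that the total creeping along \(s\) is bounded by the length of \(s\), so each surface contributes its length to the crept region at most once. New tendril contact points simply seed new subarcs inside the free part, and the same reasoning applies to them.

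The main obstacle is that the paper's formalism is informal about what ``creeping on'' a surface means. In particular, it does not say how to handle surfaces adjacent at wall endpoints, where a front may pass from one surface to the adjacent one. I would handle this by treating the chain of adjacent surfaces around a wall as a single closed or open curve, which is legitimate because each surface has at most two neighbours. The front argument above then applies verbatim on that curve. The only remaining issue is the degenerate case of a tendril touching \(s\) exactly at an existing front, which is harmless since that point is already in the vine.
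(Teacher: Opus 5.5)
Your proposal is correct and uses the same idea as the paper. The paper's proof is just the monotonicity observation: once a front has crept over a surface the vine touches it forever, and since a front by definition advances only into untouched surface, no later front can creep over it again. Your arc-length bookkeeping and your chaining of adjacent surfaces make this more explicit, but the appeal to Lemma~\ref{lma:front-erasure} is unnecessary, and that lemma states the converse of what you use: the fact that a front stops on reaching vine follows directly from the definition of a front.
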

\begin{proof}[Proof]
Once a front creeps over a surface, the vine touches the surface. Therefore, the vine will always touch that surface. Therefore, the vine will never again not be touching that surface. Therefore, that surface cannot be crept on again by any other front. 
\end{proof}

\begin{lemma}[Creeping Wall Requirement]
\label{lma:creeping-wall}
If a front creeps for \(x\) seconds, it touches a length of \(x\) surface. 
\end{lemma}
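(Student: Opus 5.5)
The plan is to read the claim off the creeping property in the rigorous definition of Section~\ref{sec:vinedef}, and then use Lemma~\ref{lma:front-erasure} and Lemma~\ref{lma:creeping-exclusive} to show that the surface a front covers is genuinely new and not shared with anything else.

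\emph{Step 1: parametrise the front.} Fix a front that starts creeping at time \(t_0\) from a point \(p\) on a surface, in a chosen direction along that surface. Parametrise the surface by arc length \(s\), with \(s=0\) at \(p\) and \(s\) increasing in the direction of creep. By the creeping property, for every \(0\le u\le x\) the set \(V(P,t_0+u)\) contains every point of the surface within length \(u\) of \(p\) in that direction. Hence the position of the front at time \(t_0+u\) is at arc length \(u\) from \(p\).

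\emph{Step 2: the front runs for the full time.} Saying that the front creeps for \(x\) seconds means it is not erased during \([t_0,t_0+x)\). By Lemma~\ref{lma:front-erasure}, it could only be erased by meeting an opposing front. So throughout this interval the point immediately ahead of the front is not yet in the vine.

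\emph{Step 3: the covered arc is new.} Combining Steps 1 and 2, the arc \(\{0\le s\le x\}\) was untouched just before the front reached each of its points, and all of it is touched by time \(t_0+x\). By Lemma~\ref{lma:creeping-exclusive}, this arc is crept only once, so it is attributed to this front alone. Therefore the front newly touches a length of exactly \(x\) of surface.

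\emph{Main obstacle: the end of a surface.} If the front reaches an endpoint before time \(t_0+x\), it continues onto the adjacent surface, which exists by the adjacency remark in the definition of surfaces, since each endpoint lies in at most two surfaces. Arc length then accumulates across the concatenated surfaces, and the total touched length is still \(x\).

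There is one further subtlety. Length is measured on the surface, which is a curve in \(C(t)\) and is isometric to its wall in \(\R^2\). So the touched length equals the corresponding length in \(\R^2\). The possible double counting of the two surfaces of one wall in \(L\) only affects \(L(S_C)\), not the surface length asserted by the lemma.
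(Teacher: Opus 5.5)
Your proposal is correct and rests on the same fact as the paper's one-line proof: a front advances along surfaces at unit speed, so in \(x\) seconds it sweeps out arc length \(x\). The newness argument in Steps 2--3 is more than this statement needs, since the paper gets disjointness later in Lemma~\ref{lma:total-creeping} by combining this lemma with Lemma~\ref{lma:creeping-exclusive}; also, in your endpoint paragraph the continuation surface exists because a wall's endpoint lies on both of its surfaces, not because of the upper bound ``at most two''.
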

\begin{proof}[Proof]
A front creeps at 1 unit per second along surfaces. Therefore, after \(x\) seconds, it has crept along a length of \(x\) surface.  
\end{proof}

\begin{lemma}[Total Creeping Wall Requirement]
\label{lma:total-creeping}
The sum across all fronts of the time each front exists is less than or equal to the length of surface the vine touches. 
\end{lemma}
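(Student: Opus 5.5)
The natural route is to combine Lemma~\ref{lma:creeping-wall} with Lemma~\ref{lma:creeping-exclusive}. The first lemma says that during the time a single front exists it sweeps out exactly that much surface. The second lemma says that these swept portions cannot overlap.

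I would first fix notation. Let \(\mathcal{F}\) be the set of all fronts that ever exist up to the time under consideration. For each front \(F\in\mathcal{F}\), let \(\tau_F\) be the total time it exists. Let \(\sigma_F\) be the set of surface points it has crept over, that is, the points that \emph{became} part of the vine by the advance of \(F\). By Lemma~\ref{lma:creeping-wall}, the length of \(\sigma_F\) is \(\tau_F\), since the front moves at unit speed along the surface for its whole lifetime. Lemma~\ref{lma:front-erasure} guarantees that a front only stops when it meets another front. So a front is never halted mid-surface while sitting on untouched surface, and \(\tau_F\) really is spent creeping.

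The second step is to show the \(\sigma_F\) are pairwise disjoint up to measure zero. This is where Lemma~\ref{lma:creeping-exclusive} enters. Once some front passes a point, that point is permanently in the vine, and no later front can creep over it. Two fronts present at the same moment occupy different positions. Two fronts that meet annihilate at a single point, by Lemma~\ref{lma:front-erasure}, so any overlap is at most a set of isolated points. Hence
\[
\sum_{F\in\mathcal{F}} \tau_F \;=\; \sum_{F\in\mathcal{F}} \operatorname{len}(\sigma_F) \;=\; \operatorname{len}\Bigl(\bigcup_{F} \sigma_F\Bigr)
\]
up to null sets. Since each \(\sigma_F\) is touched surface, this union is contained in the surface the vine touches, which gives the inequality. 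The inequality can be strict because surface can also be touched directly by tendril endpoints, or become touched at a seed point without any creeping.

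The main obstacle is a bookkeeping subtlety: whether the sum over fronts should count surface in \(\R^2\) length or surface length. A wall has two surfaces, and each can be crept separately. I would state the lemma in terms of surface length, counting both sides of a wall. That matches Lemma~\ref{lma:creeping-exclusive}, which is phrased per surface, and it avoids the single-counting convention used for \(L(S)\). A second point to handle is that the number of fronts may be infinite. If needed, I would handle this by taking the sum as a supremum over finite subfamilies. Disjointness then gives the bound for each finite subfamily, and the bound passes to the supremum.
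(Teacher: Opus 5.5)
Your proposal is correct and follows the paper's argument. The paper likewise combines Lemma~\ref{lma:creeping-wall} (a front existing for $x$ seconds covers $x$ of surface) with Lemma~\ref{lma:creeping-exclusive} (no surface is crept on twice) to conclude that the swept pieces are disjoint, so their lengths sum to at most the touched surface; your added care about measure-zero overlaps at creation and meeting points, counting both sides of a wall, and infinitely many fronts tightens that argument without changing the route.
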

\begin{proof}[Proof]
By Lemma~\ref{lma:creeping-wall}, if a front exists for \(x\) seconds it touches a surface length of \(x\). By Lemma~\ref{lma:creeping-exclusive}, fronts cannot touch the same surface twice. Therefore, each front across \(x\) seconds of existence touches a length of \(x\) unique surface, and thus the total sum across all front existences cannot exceed the amount of surface touched. 
\end{proof}

\begin{lemma}[Wall Surface Continuity]
\label{lma:wall-surface-cont}
If the vine touches a surface at some time \(t\), if the vine is allowed to creep forever with no more walls ever added, the vine will touch the opposing surface of the wall if and only if the wall whose surface the vine is touching is not part of a loop whose other face contains no point of the vine. 
\end{lemma}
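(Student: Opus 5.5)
We would prove the lemma by treating the walls frozen at time \(t\) as a fixed, finite union of compact rectifiable curves, and studying the connected component of the wall containing the touched surface. The key observation is that creeping, as given by the creeping property, moves a front along a surface. When the front reaches an endpoint of that surface, it continues onto the adjacent surface; by the definition of surfaces, each surface is adjacent to at most two others. So if the vine creeps forever with no new walls, the set of surfaces it eventually touches, starting from a single touched point, is the whole chain of mutually adjacent surfaces reachable from it. Topologically, this chain is the boundary curve of one face (complementary region) of \(\R^2\setminus W(t)\) locally bordering the wall. We therefore reduce the lemma to a statement about this boundary chain.

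\emph{Case 1: the wall is not part of a loop.} Then its connected component in \(W(t)\) is a tree-like set of arcs and does not separate the plane locally. Walking around the boundary of the incision, one passes along one surface of the wall, around its free endpoint, and back along the other surface. Concretely, at a free endpoint the two surfaces of the wall meet, and the creeping front passes from one to the other. By Lemma~\ref{lma:front-erasure}, the front can only disappear by meeting another front. We argue that any such meeting happens only after the surface in between is already covered, so the opposing surface is eventually reached; its length is finite, and the front travels at rate \(1\).

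\emph{Case 2: the wall lies on a loop.} Then the loop is a Jordan-type closed curve, and the two surfaces of the wall face different complementary regions. There are two sub-cases.
\begin{itemize}
\item If the other face contains a point of the vine, that point lies in the vine, and by the tendril property together with the monotonicity of \(V\) it reaches (or already touches) the opposing surface. More carefully, the other face must already be in contact with the vine: any point of the vine in that face is connected within the vine to the origin by paths in \(C\), and such a path must cross the boundary of the face only through a surface. We would make this precise using the definition of \(C(t)\) as the closure with the shortest-path metric.
\item If the other face contains no vine point, then no path in \(C(t)\) from the vine enters it. Creeping stays on surfaces bordering the current face, and tendrils cannot cross walls, so the opposing surface is never touched. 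This gives the ``only if'' direction.
\end{itemize}

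The main obstacle is the topological bookkeeping. We must formalize that the creeping orbit of a surface point is exactly the boundary cycle of its face in the incised surface, and that the two surfaces of a wall lie on the same boundary cycle if and only if the wall is not on a separating loop. For this we would use the Jordan curve theorem applied to the finite union of curves: a wall arc whose removal disconnects nothing (a bridge in the planar embedding) has both sides on the same face, while an arc on a cycle borders two distinct faces. This is a standard fact about planar embeddings, which we would cite, approximating rectifiable curves by polygonal ones if needed.
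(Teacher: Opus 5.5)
\textbf{The gap: the loop case where the far face contains vine.} You add a sub-case in which the wall lies on a loop and the far face \(F_2\) contains vine. That is right to do, since the ``if'' direction needs it, and the paper's proof of this lemma never treats it. The paper handles only the non-loop case and the vine-free-loop case; it later relies on an unproved ``the vine is contiguous'' remark in Lemma~\ref{lma:wall-reqs-rays}. Your argument for this sub-case fails as written, in two places.

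First, you cannot use the tendril property. Tendrils are at the discretion of the plan \(P\), and the conclusion has to hold when the vine only creeps.

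Second, and more seriously, you claim a vine point of \(F_2\) is joined to the origin by a path in \(C\) that crosses the face boundary ``through a surface''. That is false. At time \(t\) an incised closed loop disconnects \(C(t)\): the inner and outer surfaces are distinct boundary points, so no path in \(C(t)\) goes from one side to the other.

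The link between the two sides is historical, not in \(C(t)\). Viewed in \(\R^2\), the vine is path-connected. So a path in it from \(F_1\) into \(F_2\) must meet the frontier, on the \(F_2\) side, of the face cut out by the connected component of \(W(t)\) containing the wall. Because that component is connected, this frontier is a single boundary walk that contains the opposing surface, and the vine continues into \(F_2\) from a point of it. Even then you need a rule the model never states: that a wall drawn through the vine counts as touched on the side into which the vine continues. Only with that rule does creeping along the walk reach the opposing surface. Without this idea, the ``if'' direction stays unproved whenever the wall lies on a loop.

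\textbf{The other two cases.} These coincide with the paper's argument. Your reduction to boundary walks of faces, together with the planar-graph fact that an edge lies on a cycle iff it borders two distinct faces, is a tighter version of what the paper asserts informally. A few small fixes:
\begin{itemize}
\item Case 1 should rest on that planar-graph fact, not on a ``tree-like'' component with a ``free endpoint''. A wall can be a bridge joining two loops, with no free endpoint, or a whisker on a component that does contain a loop.
\item A bridge is exactly an edge whose deletion \emph{does} disconnect its component. The property you actually need is that it does not separate two faces.
\item The detour through Lemma~\ref{lma:front-erasure} is unnecessary. The creeping property, propagated across shared endpoints of adjacent surfaces, already puts every point of the boundary walk at walk-distance \(d\) from the touched point into the vine by time \(t+d\).
\end{itemize}
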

\begin{proof}[Proof]
If the wall is not part of a loop, then the bounded region on the other side of the wall, on the untouched surface, is the same bounded region the vine is currently touching the wall on, on the touched surface. Therefore, the vine may creep around the entirety of the wall, the two surfaces of that wall must be connected as well. Therefore, after time time, the vine will creep around to that other surface.

If the wall is part of a loop, however, and in the face contained in that loop there is no vine, then as the vine may not cross the wall, there is no way for the vine to get to the other face across the loop. Therefore, the vine will never creep around to that other surface.
\end{proof}

\begin{lemma}[Loop Creeping]
\label{lma:loop-creeping}
If the vine touches a loop of wall, where the face on the other side of the loop there is no vine, the vine is either already successfully contained or will, if the vine is allowed to creep forever with no more walls ever added, creep around the loop and contain the entire loop strictly inside itself.
\end{lemma}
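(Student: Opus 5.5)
We argue by cases on the topology of the face of \(C(t)\) in which the touching part of the vine lies, relative to the loop \(L\). Let \(p\) be a point of the vine on a surface of some wall in \(L\), and let \(F\) be the connected component of \(C(t)\) (no further walls are added) containing \(p\). By hypothesis the face \(I\) on the other side of \(L\) contains no vine. The loop \(L\) separates the plane into a bounded and an unbounded region, so either \(F\) lies in the unbounded region (the vine touches \(L\) from outside, and \(I\) is the bounded interior) or \(F\) lies in the bounded region (the vine touches \(L\) from inside).

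\emph{Case 1: the vine touches \(L\) from inside.} Then every point of the vine connected to \(p\) lies in the bounded region enclosed by \(L\). If the whole vine lies inside closed figures of wall, the vine is already contained. To make this precise, we argue that every point of the vine is reachable from the origin by a path in \(C(t)\), since tendrils and creeping both move along paths in the court. Hence the whole vine lies in the single component containing the origin. Since \(p\) is in that component and it is bounded by \(L\), the vine is enclosed by the closed figure \(L\), which is successful containment.

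\emph{Case 2: the vine touches \(L\) from outside.} Here \(I\) is the bounded interior and contains no vine. The surfaces of the walls of \(L\) facing \(F\) form a closed chain of surfaces, each adjacent to the next. As noted in Section~\ref{sec:vinedef}, every surface is adjacent to at most two others at shared endpoints. By the creeping property, the front at \(p\) advances at rate 1 in both directions along its surface. Lemma~\ref{lma:front-erasure} says a front only stops when it meets another front, so at an endpoint the front continues onto the adjacent outward-facing surface of the chain. Since \(L\) is rectifiable with finite length \(\ell\), after time at most \(\ell\) the two fronts from \(p\) meet, or are erased earlier by other fronts. Either way, every outward surface of \(L\) is touched. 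The vine then contains a closed curve equal in \(\R^2\) to \(L\), with \(I\) on its bounded side. So the loop lies strictly inside the closed figure traced by the vine, excluding the interior face itself, which by Lemma~\ref{lma:wall-surface-cont} the vine never enters.

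The main obstacle is the step in Case 2 where a front passes from one surface to the adjacent one. At a junction where several walls meet, the outward face may turn onto a wall that is not part of \(L\) (a spur sticking out). I would handle this by following the boundary of the face \(F\) instead of \(L\) alone. The boundary of \(F\) adjacent to \(L\) is a closed walk of surfaces that traverses any spur on both sides, so its length is at most twice the total wall length, which is finite. The fronts traverse this walk in finite time and in particular cover every outward surface of \(L\). Making this precise needs the finite-union-of-rectifiable-curves assumption on walls, so that the face boundary is a finite closed walk of surfaces.
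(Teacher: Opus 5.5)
Your proposal is correct and follows the paper's proof. It uses the same inside/outside split: in the inside case the vine is already contained, and in the outside case the vine creeps all the way around the loop, which you make explicit by following front propagation along the boundary walk of $F$, where the paper only says the surfaces outside the loop ``are all connected.''

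One caveat applies to both your argument and the paper's. If another wall forms a vine-free pocket against the outside of $L$, your claim that every outward surface of $L$ gets touched is false (the paper's ``containing all outside surfaces'' fails the same way). The boundary walk of $F$ that you follow still encloses $L$, and that enclosure is all the lemma's conclusion requires.
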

\begin{proof}[Proof]
As all points of the vine lie on one side of the loop, we may consider the vine to be ``inside" the loop if the vine is contained in the region with finite area and ``outside" the loop otherwise. If the vine is inside the loop, then the vine is clearly successfully contained as it is restricted to a bounded figure with finite area. If the vine is outside the loop, then as the loop is formed of connected surfaces, if the vine is not successfully contained, it must be able to creep entirely around it as the surfaces outside the loop are all connected. This would mean the vine will eventually surround the loop externally, bordering it on the outside, containing all outside surfaces, and containing the entire loop strictly inside itself.
\end{proof}

\begin{lemma}[Creeping on Rays]
\label{lma:creeping-rays}
If, in the cardinal \(n\)-vine, a tendril moving along its ray touches a piece of wall and is halted, if the vine is allowed to creep forever with no more walls ever added, the vine is contained or the tendril will eventually un-halt and be able to continue along its ray forever. 
\end{lemma}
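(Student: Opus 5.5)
The plan is to fix the moment the tendril on ray \(r\) is halted, say at time \(t_0\) at a point \(q\) of some wall \(w\), and freeze the wall configuration \(W(t_0)\) from then on. By the creeping property the vine now contains the point of the surface of \(w\) facing \(q\). From there, fronts spread in both directions along the connected chain of surfaces containing that point. I would split into the two cases provided by Lemma~\ref{lma:wall-surface-cont}: either \(w\) is part of a loop whose other face contains no vine, or it is not.

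\emph{Loop case.} Here Lemma~\ref{lma:loop-creeping} applies directly. Either the vine is already contained, or after finite time it creeps entirely around the loop and holds the whole loop strictly inside itself. In the second case every surface on the outside of the loop, including the point where ray \(r\) leaves the loop's outer boundary, is eventually in the vine.

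\emph{Non-loop case.} Lemma~\ref{lma:wall-surface-cont} says the vine eventually reaches the opposite surface of \(w\). Since \(w\) is compact, its total surface length is finite, so the time needed is finite, bounded by that length via Lemma~\ref{lma:creeping-wall}.

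In both cases I then look at the point \(q'\) on ray \(r\) where the ray last meets the wall component, which exists by compactness. Beyond \(q'\), the open segment of \(r\) up to the next wall component is free, and the surface at \(q'\) facing outward along \(r\) is eventually touched. At that moment the tendril can resume from \(q'\) along \(r\); the right-continuity and jump clause in the definition of \(f_k\) allows the tendril to restart from a newly reached vine point.

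To get ``forever'', I repeat the argument. Since \(W(t_0)\) is a finite union of compact curves, the ray meets only finitely many wall components. Each one either contains the vine (the vine is contained) or is passed in finite time. After the last component the ray is unobstructed, so the tendril continues forever.

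The main obstacle is making precise that, once the vine creeps around a component, the specific point of that component's surface lying outward along \(r\) is actually reached. This matters especially when \(r\) crosses the component several times, or passes through enclosed pockets, as in the south-tendril example of the figure. I would handle this by working in the court \(C(t_0)\). The unbounded face adjacent to the component has boundary consisting of connected outer surfaces, and the creeping from Lemma~\ref{lma:loop-creeping} and Lemma~\ref{lma:wall-surface-cont} covers all of them. The outermost intersection point \(q'\) lies on that outer boundary, because the segment of \(r\) beyond it until the next component lies in the unbounded face. Within a component, Lemma~\ref{lma:front-erasure} guarantees that fronts persist until they merge, so coverage of the outer boundary completes in finite time.
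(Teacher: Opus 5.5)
Your proposal is correct and follows the paper's route: you split on whether the struck wall lies on a loop with a vine-free other face (Lemma~\ref{lma:loop-creeping}) or not (Lemma~\ref{lma:wall-surface-cont}), then iterate over the finitely many wall pieces the ray meets. Your extra care with the farthest intersection point \(q'\) and with finiteness of the creeping time fills in what the paper leaves implicit. One small slip: that time is bounded by the total surface length of the whole component containing \(w\), not of \(w\) alone, since fronts may have to creep around walls attached to \(w\).
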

\begin{proof}[Proof]
If the wall that the tendril touches is part of a loop, then by Lemma~\ref{lma:loop-creeping}, the tendril will reach the other side of that loop on the outside (and thus be able to continue on along its ray) if the vine is not contained. 

If the wall that the tendril touches is not part of a loop, then by Lemma~\ref{lma:wall-surface-cont}, the tendril will reach the other surface of that wall (and thus be able to continue on along its ray). 

Therefore, if the vine is not contained, the tendril will thus be able to continue along its ray.

Should the vine encounter another piece of wall along its ray, by the above, it will only not be able to continue along its ray if the vine is contained. As such, if the vine is not contained, the tendril will thus be able to continue along its ray forever, as it can only encounter finitely many such pieces of wall. 
\end{proof}

\begin{lemma}[Wall Requirements for Creeping on Rays]
\label{lma:wall-reqs-rays}
If, in the cardinal \(n\)-vine, a tendril moving along its ray touches a piece of wall and is halted at a time \(t\), and the tendril is un-halted at time \(t+x\) (and not before) to continue along its ray, then over the course of that time \(x\) there must have been at least two fronts creeping, each covering a distance of \(x\). 
\end{lemma}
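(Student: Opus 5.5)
The plan is to look at the point $q$ on ray $r$ where the tendril is halted at time $t$, and to ask what must happen for the tendril to be un-halted. Un-halting means the vine reaches a point on the far side of the obstruction along $r$, namely a point of the opposite surface (or of the outer face of a loop) from which the ray continues. At time $t$ the vine first contacts the wall at $q$, on a surface that it had not touched before. By the creeping property, this contact spawns two fronts leaving $q$, one in each direction along the surface. I would first check this: $q$ is an interior point of a previously untouched surface, or else an endpoint, in which case the adjacent surface supplies the second direction. Lemma~\ref{lma:front-erasure} then says these two fronts persist until each runs into another front.

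The key step is to show that both fronts must actually keep creeping for the whole interval $[t,t+x]$. Suppose one of them, say the clockwise front, were erased at some time $s<t+x$. Then by Lemma~\ref{lma:front-erasure} it met an opposing front, so the vine already occupied the surface beyond it, reached by some other route. I would argue by cases, following Lemma~\ref{lma:creeping-rays}:
\begin{itemize}
\item If the wall is not part of a loop, its surfaces form a single connected boundary curve around the wall. Any vine material meeting the clockwise front arrived from the region on the other side of the wall near $q$. That arrival already makes the ray beyond $q$ reachable, so the tendril could un-halt by time $s$, contradicting the assumption that it does not un-halt before $t+x$.
\item If the wall is part of a loop (Lemma~\ref{lma:loop-creeping}), the same reasoning applies to the outer boundary. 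Meeting another front there means the vine has wrapped around to the side of the loop from which the ray continues.
\end{itemize}
So neither front can be erased before $t+x$. Each therefore exists for the full $x$ seconds, and by Lemma~\ref{lma:creeping-wall} each covers a distance $x$. The fronts cover distinct surface by Lemma~\ref{lma:creeping-exclusive}, which gives the claimed total.

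The main obstacle is the middle step: making rigorous that erasing either front before time $t+x$ forces the far side of $q$ along $r$ to become vine-accessible earlier. This needs a topological argument about the boundary of the face containing $q$. I would try to parametrize that boundary component as a closed curve through $q$ and its ``exit'' point $q'$ on $r$, which exists by Lemma~\ref{lma:creeping-rays}. I would then note that $q$ splits this closed curve into two arcs, each running from $q$ to $q'$. Any front meeting one of our fronts must lie on one of these arcs, and it came from a vine connection that reaches $q'$ or the ray beyond it no later than the meeting time.

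Two further subtleties need care. First, tendrils are capped at speed 1, so a shortcut through open space would itself take time; I would bound this using the fact that the vine moves at speed at most 1. Second, walls added during $[t,t+x]$ could split or extend the surface. Since $W$ is monotone, new walls only lengthen the paths, so they do not invalidate the argument.
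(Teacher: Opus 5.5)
\textbf{The key step fails.} You claim that neither of the two fronts born at $q$ can be erased before time $t+x$. This is false. Your justifications for it are exactly where it breaks: that any front meeting yours ``arrived from the region on the other side of the wall near $q$'', or ``came from a vine connection that reaches $q'$''.

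The front that erases yours can come from the \emph{same} side as $q$. It can come from another tendril's contact with the same connected wall, or from creeping left over from an earlier halt of the same tendril.
\begin{itemize}
\item Cardinal $4$-vine, walls $(2,-5)$--$(2,2)$--$(-1,2)$. At time $2$ the east tendril halts at $q=(2,0)$ and the north tendril halts at $(0,2)$, both on the inner side. The upward front from $q$ and the rightward front from $(0,2)$ annihilate at the inner corner at time $4$. Yet the east tendril is freed only at time $8$, when the north tendril's leftward front, having gone around $(-1,2)$ and over the top, comes down the east face to $(2,0)$.
\item Cardinal $1$-vine, polygonal wall $(1,-1)$--$(1,5)$--$(3,5)$--$(3,-100)$. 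The east tendril halts at $(1,0)$, is freed at time $3$, and halts again at $(3,0)$ at time $5$. Its new upward front is killed at $(3,5)$ at time $10$ by the interior front left over from the first halt. It is freed only at time $13$, by the exterior front.
\end{itemize}
So your opening claim that $q$ lies on a ``previously untouched'' surface is also wrong in general; only the point $q$ itself is untouched.

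The paper's proof of Theorem~\ref{thm:cardinal-2-vine} is built around precisely this phenomenon: two halted tendrils whose creeping collides, so that only two fronts remain. Your claim would rule it out.

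\textbf{The missing idea} is to stop tracking particular fronts and count them instead, as the paper does.
\begin{itemize}
\item Fronts are created in pairs and destroyed only in colliding pairs (Lemma~\ref{lma:front-erasure}), so their number is always even.
\item The number cannot be zero at any instant of $(t,t+x)$. Zero fronts means every touched component of surface is completely covered. Lemmas~\ref{lma:wall-surface-cont} and~\ref{lma:loop-creeping} then give that the far side of the obstruction is already touched, or that the touched side lies in a loop with vine on both sides. Either way the tendril is already free.
\end{itemize}
Hence at least two fronts are active at every moment. This gives at least $2x$ of creeping over the interval, which is the only form used later ($S_C\ge 2V_C$).

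The fronts doing this creeping change identity over time, and the statement has to be read this way. Add the wall $(2,-5)$--$(-1,-5)$ to the first example. The south tendril halts at $(0,-5)$ at time $5$, and its rightward front kills the downward front from $q$ at $(2,-5)$ at time $7$. Only the north tendril's leftward front then survives all of $[2,8]$.

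If you want to keep your two-arc picture, the correct invariant is a relay rather than survival. On each arc from $q$ to $q'$, the far end of the maximal vine-covered initial segment is a front moving toward $q'$ until $q'$ is reached. This gives two distinct fronts at every instant, though not the same two throughout.
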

\begin{proof}[Proof]
This is trivially true if \(x=0\). So assume \(x>0\). 

By Lemma~\ref{lma:creeping-wall}, we must only show that there are at least two fronts creeping between times \(t\) and \(t+x\). We may say that the wall along the ray which is reached by the vine at time \(t+x\) is, at time \(t\), untouched (if it is not yet built, then of course it must be untouched). Therefore, as the vine reaches points either by sending a tendril along that ray or by creeping, and the tendril along this ray is halted, the vine must reach this point by creeping. 

Therefore, at time \(t+x\), creeping must occur and thus there must be at least one front. 

By Lemma~\ref{lma:front-erasure}, fronts can only cease to exist when they hit another front, and as fronts are created in pairs when a tendril touches an untouched surface of wall or a new wall is drawn to touch a touched surface of wall, there must be an even number of fronts at any given time. 

Therefore, it suffices to show that, at no time between \(t\) and \(t+x\), were there zero fronts. 

There are zero fronts at a given time \(t\) if and only if there are no sections of surfaces touched by the vine adjacent to sections of surfaces untouched by the vine. In other words, every surface that is touched by the vine is only adjacent to surfaces that are also touched by the vine. As such, the whole connected curve of every surface which has any point of the vine must be entirely contained within the vine. 

By Lemma~\ref{lma:wall-surface-cont} and Lemma~\ref{lma:loop-creeping}, this would mean that either the vine has crept over to the other side of the wall that the tendril touched and was halted for, meaning the tendril is continuing along its ray, or the vine-touched side is contained within a loop. 

But then, either there is no vine outside of the loop, in which case it cannot possibly creep to the outside of the loop, or there is vine outside of the loop. If there is vine outside of the loop, then, as there is vine inside of the loop, there must be vine on both sides of the loop. As the vine is contiguous, there must be vine touching both the inside and the outside surfaces of the loop. Therefore, the entire loop must be touched by the vine inside and outside, meaning that the other side of the vine-touched side that halted the tendril is, in fact, vine-touched and thus the tendril is continuing along its ray already. 

Therefore, if there is no front active at any time \(t+z\) strictly between \(t\) and \(t+x\), then at that time the tendril will be un-halted and will be continuing along its ray at time \(t+z\). This is a contradiction. Therefore, at all times between \(t\) and \(t+x\), there are at least two fronts active. 
\end{proof}

\begin{lemma}[Other Wall Requirements for Creeping on Rays]
\label{lma:other-wall-reqs-rays}
If, in the cardinal \(n\)-vine, a tendril moving along its ray touches a piece of wall and is halted at a time \(t\), the vine is first successfully contained at time \(t+x\), and the tendril is not un-halted before time \(t+x\), then over the course of that time \(x\) there must have been at least two fronts creeping, each covering a distance of \(x\). 
\end{lemma}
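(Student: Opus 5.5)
I would mirror the proof of Lemma~\ref{lma:wall-reqs-rays} almost verbatim, replacing the terminal event ``the tendril un-halts at time \(t+x\)'' by ``the vine is first contained at time \(t+x\)''. The case \(x=0\) is trivial, so I assume \(x>0\). By Lemma~\ref{lma:creeping-wall} it suffices to show that at every time \(t+z\) with \(0<z<x\) there are at least two fronts active. Then two fronts creep throughout the interval and each covers a distance of \(x\).

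First I recall the parity fact from the proof of Lemma~\ref{lma:wall-reqs-rays}. Fronts are created in pairs, either when a tendril touches an untouched surface or when a new wall is drawn touching a touched surface. By Lemma~\ref{lma:front-erasure}, they disappear only by meeting another front, which removes them in pairs. So the number of fronts is always even, and it is enough to rule out a time \(t+z\), \(0<z<x\), at which there are zero fronts.

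Suppose there were such a time. As in the earlier proof, the absence of fronts means that every connected chain of surfaces carrying any vine is entirely covered by vine. Apply Lemma~\ref{lma:wall-surface-cont} and Lemma~\ref{lma:loop-creeping} to the wall that halted the tendril. Either that wall is not part of a loop whose far face is vine-free, or it is part of such a loop.

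In the first case the vine has already reached the opposite surface of the wall. Then the tendril would be un-halted at or before \(t+z<t+x\), contradicting the hypothesis. In the second case I reuse the earlier inside/outside argument. If there is vine on both sides of the loop, then by contiguity both faces are fully touched, and we are back in the first case. Otherwise the vine lies entirely on one side of a closed loop of wall, and all surfaces it touches are fully covered, so no front can ever again reach the other side. I will argue this means the vine is already contained at time \(t+z\). If the vine is inside the loop, it is confined to a bounded region, which is exactly containment as in Lemma~\ref{lma:loop-creeping}. This contradicts \(t+x\) being the first containment time.

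The main obstacle is the sub-case where the vine is outside the loop with no fronts. Lemma~\ref{lma:loop-creeping} says the outside vine then fully surrounds the loop, so the tendril's blocking point, which lies on the outer surface the vine touched, has its continuation along the ray on the outer side already vine-touched. I would argue that the tendril must then be un-halted, contradicting the hypothesis as in the first case. Once every zero-front case yields either an un-halt before \(t+x\) or containment before \(t+x\), both excluded, the lemma follows.
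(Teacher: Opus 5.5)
Your proposal is correct and follows essentially the same route as the paper, whose proof reruns the argument of Lemma~\ref{lma:wall-reqs-rays}: since the vine is not contained before $t+x$, a loop around the touched surface must have vine outside it, so some untouched surface abuts a touched one and at least two fronts exist. You make this parity-plus-contradiction argument explicit, showing a zero-front instant would force either an un-halt or a containment before $t+x$, and your treatment of the outside-loop subcase via Lemma~\ref{lma:loop-creeping} spells out what the paper leaves implicit.
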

\begin{proof}[Proof]
If the vine is first successfully contained at time \(t+x\), then it is not successfully contained prior to \(t+x\). Therefore, by the above argument, if the surface that the tendril touched is inside of a loop, there is vine outside of that loop as otherwise the vine would be successfully contained. Therefore, there is some untouched surface contiguous to the touched surface, and therefore there are at least two fronts.

\end{proof}

Now we begin our analysis of the cardinal \(1\)-vine. 

\begin{theorem}
\label{thm:cardinal-1-vine}
The cardinal \(1\)-vine has critical \(\lambda\) equal to \(1\): for \(\lambda>1\) it can be contained, and for \(\lambda\leq 1\) it cannot be. Consequently, since the vine is a weakening of the blob, the blob cannot be contained for any \(\lambda\leq1\) (Bressan~\cite{bressan2007}).
\end{theorem}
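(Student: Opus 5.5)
The proof splits into two halves: an explicit containment strategy for \(\lambda>1\), and a counting argument for \(\lambda\le 1\) built on the front lemmas.

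\textbf{Containment for \(\lambda>1\).} The cardinal \(1\)-vine sends a single tendril along a fixed ray \(r\), say the positive \(x\)-axis, so its behaviour is completely predictable. I would build one straight wall segment perpendicular to \(r\), crossing it at a point \(d\) with \(0<\delta<d\) (\(\delta\) the exclusion radius). The segment is built outward from the crossing point, in both directions, starting at time \(0\).

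The tendril reaches the wall at time \(d\). From then on the only growth is the two fronts creeping along the near surface, each at unit speed. By time \(d+s\) the wall has half-length at least \(\lambda(d+s)/2\), while each front has crept only \(s\). Moreover, only the crept portion, of length \(2s\), counts toward \(L(S_C)\). Since \(2s\le\lambda(d+s)\), the constraint \(L(S_C(t))\le\lambda t\) is satisfied automatically.

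I would close the figure as follows. Once the wall is long enough, keep extending both ends around a large circle or rectangle centred at the origin, racing the fronts. The fronts move at speed \(1\) each, while the two wall ends together grow at rate \(\lambda>1\), and the wall started with a head start of \(\lambda d\). Even if all building goes into one end, the two fronts travel in opposite directions around a closed curve whose total unexplored length shrinks like \((2-\lambda)t\) at worst. So a cleaner plan is to build the closed loop with both fronts chasing: the total length to be covered is \(P\), the wall grows at rate \(\lambda\), and the vine touches it at rate \(2\).

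I expect the real point is this: \emph{touched} wall only needs to stay at most \(\lambda t\), and the fronts touch at combined rate \(2\). The fronts therefore must not be allowed to touch too fast, meaning the budget is the constraint, not a race. Concretely, choose a loop of perimeter \(P\) and build it all before time \(d\), so \(\lambda d\ge P\); no, \(P\ge 2\pi d\) forces \(\lambda\ge 2\pi\).

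Instead I would exploit the fact that the \(1\)-vine only sees touched walls. Build a thin rectangle around the ray: a short cap crossing \(r\) at distance \(d\), plus two long walls that the vine never touches. Untouched walls cost nothing, because \(L(S_C)\) counts only touched surfaces. The fronts then creep along the cap and the rectangle's inside, which is total touched length at most \(2t\) per unit time. That is too fast, so the cap should be placed far away, giving a large \(t\) budget.

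Optimising this, a closed loop of perimeter \(\approx 2d+\epsilon\) (a thin sliver around the segment \([0,d]\) of the ray) is touched only starting at time \(d\). After that the vine creeps along the inside of the sliver, and at time \(d+s\) the touched length is \(2s\le\lambda(d+s)\) for \(s\le d\) when \(\lambda\ge1\). Containment completes by time about \(2d\). The budget check \(2s\le\lambda(d+s)\) at \(s=d\) needs \(\lambda\ge1\), with \(\lambda>1\) giving slack for the \(\epsilon\)-width and the exclusion disk.

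\textbf{Impossibility for \(\lambda\le1\).} Run the cardinal \(1\)-vine against any strategy that contains it at time \(T\).

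1. Lemma~\ref{lma:dim-cardinal-vine} gives \(D_r(T)\ge T_F\), where \(T_F\) is the time the tendril has moved freely.
2. For each halted interval, Lemmas~\ref{lma:wall-reqs-rays} and \ref{lma:other-wall-reqs-rays} give two fronts. By Lemma~\ref{lma:total-creeping}, the touched surface is therefore at least \(2(T-T_F)\).
3. Lemma~\ref{lma:wall-reqs-ray-motion} gives free surface at least \(H(\{p_r,o\})=2D_r\ge 2T_F\).
4. Steps 2 and 3 together give total wall at least \(2(T-T_F)+2T_F=2T\), minus overlap. The overlap is at most the touched length, because touched and free surfaces of the same wall count once.

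So the wall needed satisfies \(L\ge T\) roughly, more precisely at least the free surface. Since free surface is untouched and eventually everything built is at most \(\lambda T\le T\), the inequality has to be made strict. Getting strictness (equality at \(\lambda=1\)) is the main obstacle. I would try to handle it using the exclusion radius, which gives an initial free run of length \(\delta>0\), so \(2D_r\ge 2T_F+\) a positive constant.
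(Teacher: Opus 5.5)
Your impossibility half is essentially the paper's counting argument. Your containment half, however, has a genuine gap in its budget accounting.

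\textbf{The containment half.} It hinges on the claim that untouched walls cost nothing because $L(S_C)$ counts only touched surfaces. The constraint in Section~\ref{sec:vinedef} is literally worded that way. But it is not the budget the proof of this theorem uses, and it is not the one your own impossibility half uses. Your step~3 charges the untouched free surface against $\lambda T$, just as the paper does with $2W=S_F+S_C$ and $W\le\lambda T$, where $W$ is all wall built by the containment time. So your two halves run under incompatible rules, and under either reading one of them is unjustified.

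Under the total-wall budget, the plan as written fails. A closed sliver already in place when the tendril arrives at time $d$ has length at least $2(d+\delta)>2d$. For $1<\lambda<2$, at most $\lambda d<2d$ can have been built by then. (Your discarded bound $P\ge 2\pi d$ is also false; the sliver itself has $P\approx 2d$.)

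The repair is to build the sliver just in time:
\begin{itemize}
\item the cap first;
\item then the two long sides, extended backward from the cap ahead of the two fronts;
\item finally the short back cap behind the exclusion disk, closed just before the fronts reach it at time about $2d$.
\end{itemize}
The wall required at time $d+s$ is then about $2s$. So your inequality $2s\le\lambda(d+s)$ for $s$ up to about $d$ becomes exactly the right check. It holds for every $\lambda>1$ once the width and the overhang past the origin are small compared with $(\lambda-1)d$, with the exclusion radius smaller still.

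Repaired this way, your route genuinely differs from the paper's. The paper feeds the tendril a sequence of height-$1$ baffles, each enveloped in one second at a cost of one unit of wall. It banks the surplus of $\epsilon$ per second and eventually spends it on an enclosing box of perimeter $4+(2T+2)\delta$. Your sliver needs no baffles: the tendril's free run is itself the head start that keeps the walls ahead of the fronts. At containment it also makes both inequalities of the impossibility count nearly tight ($S_F\approx 2T_F$ and $S_C\approx 2T_C$). That is a more transparent witness that the threshold is exactly $1$.

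\textbf{The impossibility half.} Two details need correcting.
\begin{itemize}
\item There is no overlap to subtract. Touched and free surfaces are disjoint, and each unit of wall carries two surfaces, so $2W=S_F+S_C\ge 2T_F+2(T-T_F)=2T$ directly.
\item Strictness does not come from an initial free run, which only yields $T_F\ge\delta$, not $D_r\ge T_F+c$. It comes, as in the paper, from the exclusion disk. The disk contains the origin and no wall, so any closed wall enclosing the vine encloses the whole disk together with $p_r$. The enclosed region then has diameter at least $D_r+\delta$. Hence the wall's outer surface, which is free, has length at least $2(D_r+\delta)>2D_r\ge 2T_F$.
\end{itemize}
This gives $2W>2T$, contradicting $W\le\lambda T\le T$.
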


\begin{proof}[Proof of sufficiency]
Assume \(\lambda=1+\epsilon\) for \(\epsilon>0\). Assume this tendril moves directly east. We begin by constructing a 1-high wall one unit to the east. The vine sends out its tendril to the east, hits the wall at time 1, and creeps along it. 

\begin{figure}[H]
    \centering
    \includegraphics[width=0.75\linewidth]{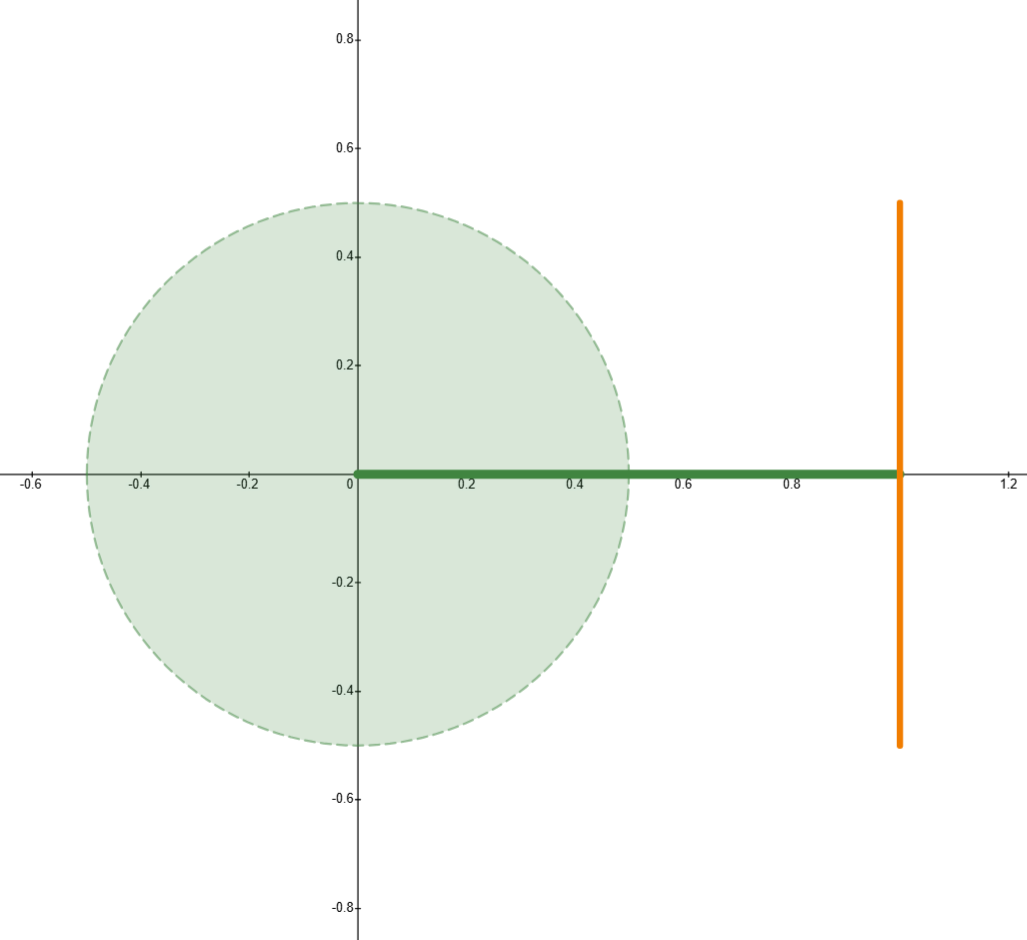}
    \caption{The cardinal 1-vine at time 1. }
    \label{fig:the-cardinal-1vine-at-time-1}
\end{figure}

The vine fully envelops the wall at time 2, at which point it sends out its tendril once more and encounters another wall of height 1 a distance \(\delta>0\) from the first. Each wall the vine envelops allows us enough time to build another.

\begin{figure}[H]
    \centering
    \includegraphics[width=1.0\linewidth]{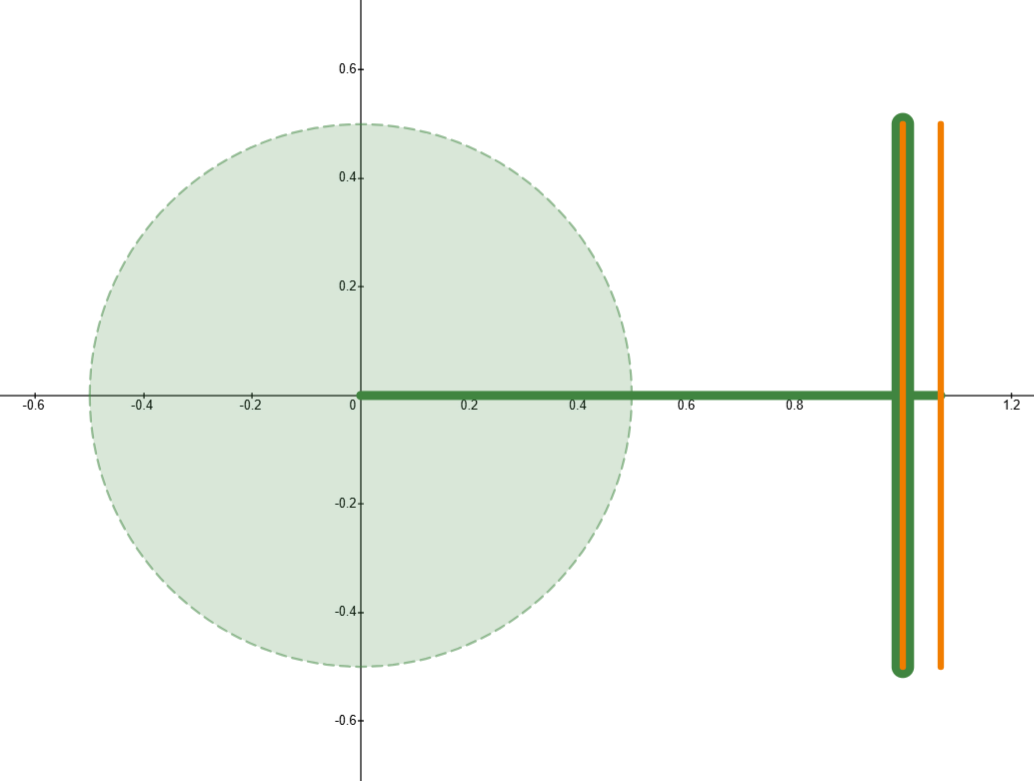}
    \caption{The cardinal 1-vine at time 2+\(\delta\). }
    \label{fig:the-cardinal-1vine-at-time-2}
\end{figure}

At time \(T+(T-2)\delta > T\), the vine has thus moved \(1+(T-2)\delta\), and lies entirely within an \(x\) range of \(1+T\delta\) and a \(y\) range of \(1+2\delta\). This, therefore, can be contained in a square of perimeter \(4+(2T+2)\delta\).

\begin{figure}[H]
    \centering
    \includegraphics[width=1.0\linewidth]{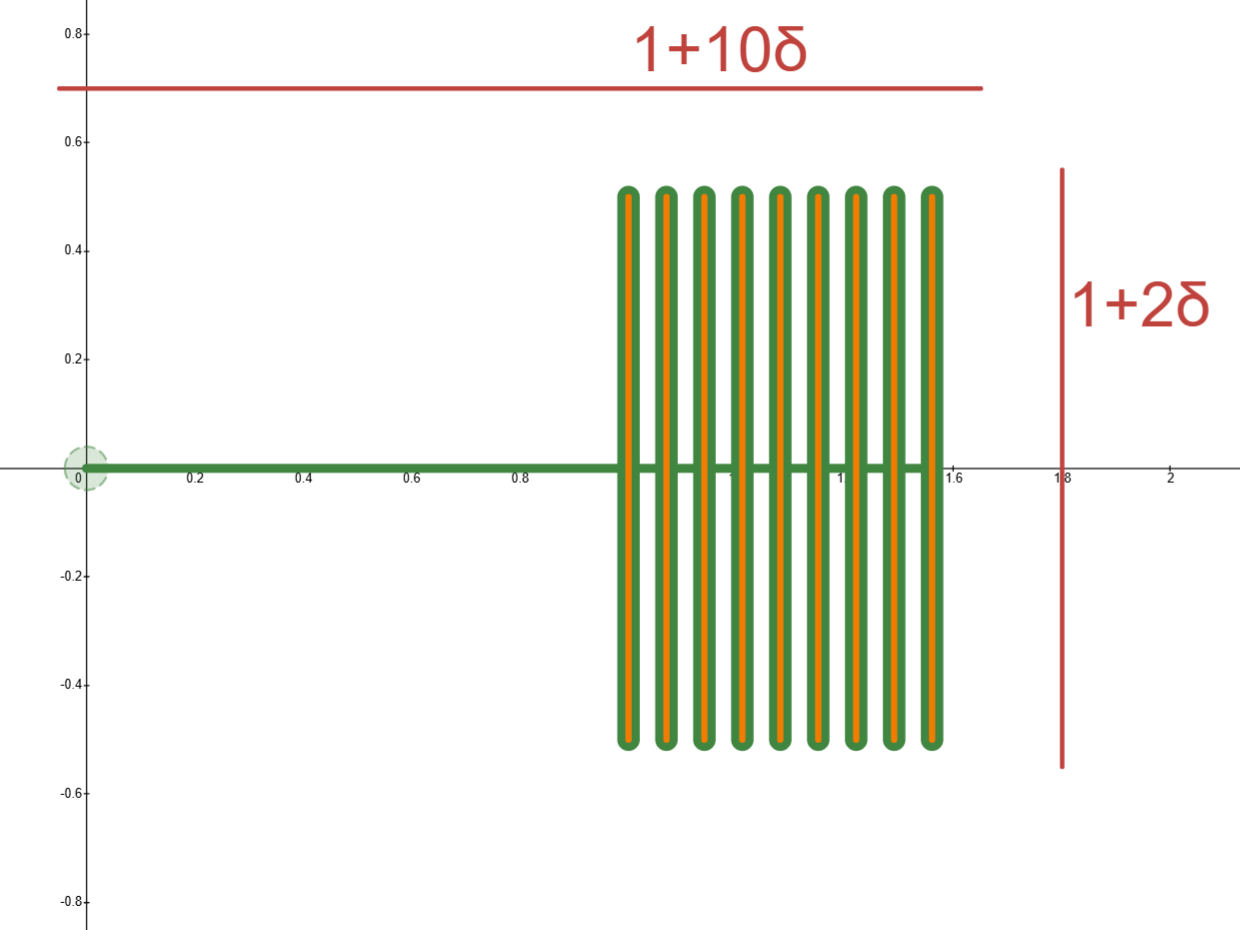}
    \caption{The cardinal 1-vine at time 10+8\(\delta\). Note that we can arbitrarily shrink the exclusion zone to fit within our parameters.}
    \label{fig:the-cardinal-1vine-at-time-10-plus-8delta}
\end{figure}

With \(\epsilon T\) length of wall, strictly less than the \(\epsilon T + \epsilon\delta(T-2)\) length of wall we can construct at this time, we see that 
\[4+(2T+2)\delta \leq \epsilon T\]
\[4+2\delta \leq \epsilon T-2\delta T\]
\[4+2\delta \leq (\epsilon -2\delta) T\]
so if we let \(\delta = \epsilon/4\), we have 
\[4+2\frac{\epsilon}{4} \leq \frac{\epsilon}{2}  T\]
\[8+\frac{\epsilon}{2} \leq \epsilon  T\]
\[8 \leq \epsilon  \left(T-\frac{1}{2}\right)\]
\[\frac{8}{\epsilon}+\frac{1}{2}\leq T\]
So if we construct \(T\) such walls for \(T\geq 8/\epsilon+3\), we will have saved enough wall to contain the vine.
\end{proof}

\begin{figure}[H]
    \centering
    \includegraphics[width=1\linewidth]{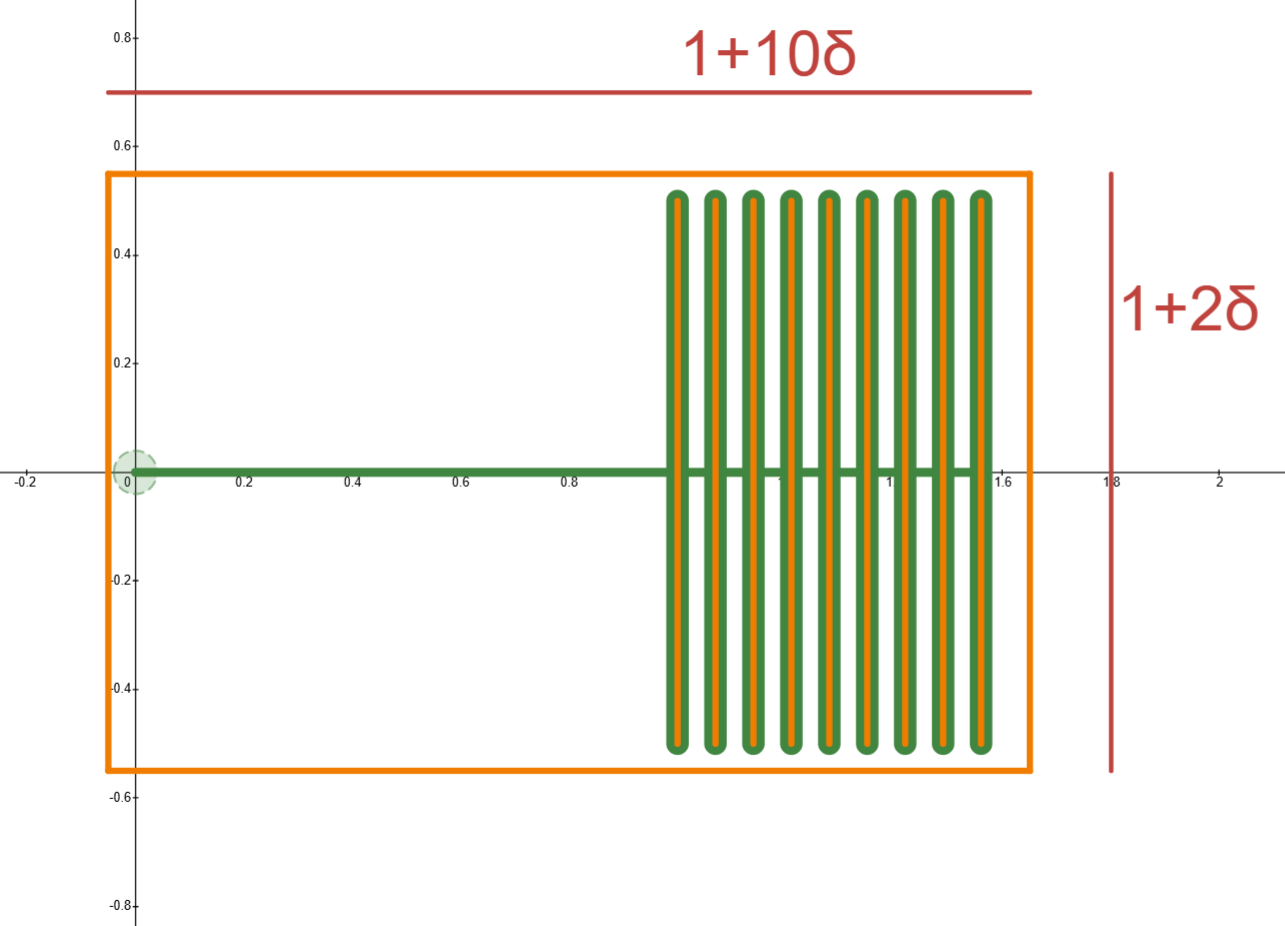}
    \caption{The cardinal 1-vine captured at time 10+8\(\delta\). In this diagram, \(\lambda \approx 1.6\).}
    \label{fig:the-cardinal-1vine-captured-at-time}
\end{figure}

\begin{proof}[Proof of necessity]
Assume \(\lambda=1\). By Lemma~\ref{lma:wall-reqs-contain} and Lemma~\ref{lma:wall-reqs-ray-motion}, due to the existence of the exclusion zone, the required length of non-vine covered surface to contain the vine, \(S_F\), must be strictly greater than twice the distance of the furthest point reached by the vine along the positive \(x\) axis at any given time \(t\). Therefore, if the time the tendril has freely traveled is \(T_F\), giving distance traveled along the ray of \(V_F=T_F\), we have \(S_F > 2V_F\). 

And, of course, if the tendril is not currently freely traveling, then the vine must be creeping along a surface. By Lemma~\ref{lma:creeping-rays} and Lemma~\ref{lma:other-wall-reqs-rays}, for every second the tendril spends halted, until such time as the vine is contained, there are at least two fronts, each of which must be covering a distance of one unit per second of wall surface. 

As such, the length of covered surface, \(S_C\), is at least twice the distance that a vine front would have crept over while the tendril was halted, \(V_C\), which is the same as the time that the tendril was halted, \(T_C\). This gives us \(V_C=T_C\) and \(S_C\geq 2V_C\). 

Finally, as the tendril is always halted or free, \(T=T_C+T_F\). 

Therefore, 
\[2W=S_F+S_C>2V_F+2V_C=2(V_F+V_C)=2(T_F+T_C)=2T\]
where \(W\) is the total length of wall (noting that a wall has two surfaces, and thus twice the surface length as wall length) and \(T\) is the time of containment, if such a containment is in fact possible. This gives us that 
\[\lambda T= 1 * T = T\geq W>T\]
and thus a contradiction.
\end{proof}

\begin{figure}[H]
    \centering
    \includegraphics[width=0.9\linewidth]{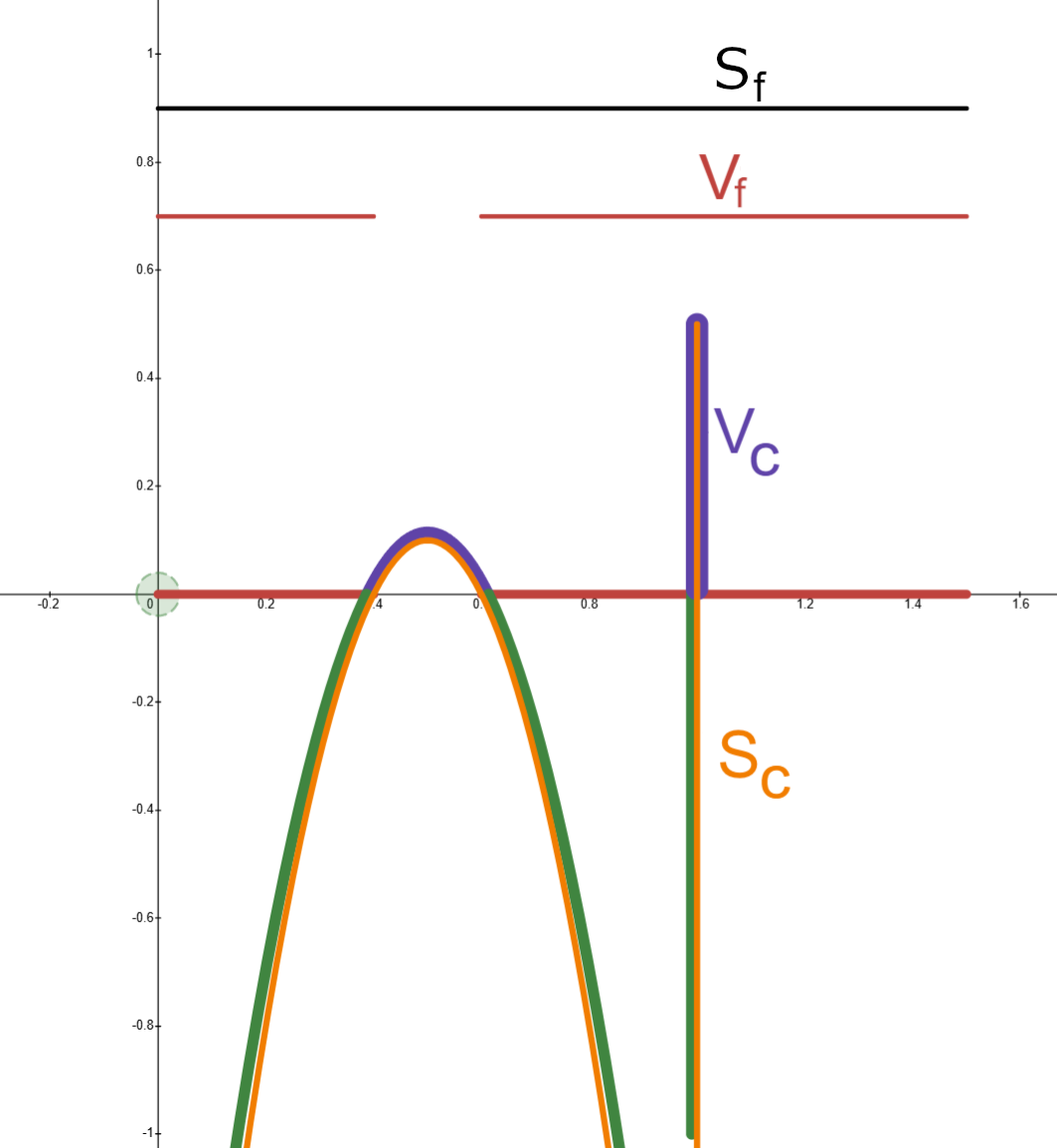}
    \caption{The cardinal 1-vine with measurements. The distance it travels to the east requires at least \(S_F\) (in black) length of uncovered wall, which we see must be more than twice the length of \(V_F\) (in red). Meanwhile, the length that the tendril creeps along during this motion, \(V_C\) (in purple) requires at least twice as much covered wall, which is \(S_C\) (in orange). If wall is only covered on one side, then it is orange, though it contributes its length once to \(S_F\) and once to \(S_C\), as opposed to twice to either one.}
    \label{fig:the-cardinal-1vine-with-measurements-the}
\end{figure}

We now move on to the cardinal 2-vine, and our improvement to the lower bound.

\begin{theorem}
\label{thm:cardinal-2-vine}
The cardinal \(2\)-vine has critical \(\lambda\) equal to \(1.5\): for \(\lambda>1.5\) it can be contained, and for \(\lambda\leq 1.5\) it cannot be. Consequently, the blob cannot be contained for any \(\lambda\leq 1.5\); that is, \(\lambda_C\geq 1.5\), improving on Theorem~\ref{thm:cardinal-1-vine}.
\end{theorem}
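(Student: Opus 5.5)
I would prove the two directions separately, following the template of Theorem~\ref{thm:cardinal-1-vine}: a sufficiency construction for \(\lambda>1.5\), and a necessity argument that counts free and covered surface against elapsed time.

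\textbf{Necessity.} Take the cardinal 2-vine with its two rays pointing east and west, so they are opposite. Let \(T_1,T_2\) be the free times of the two tendrils up to the containment time \(T\), and let \(D_1,D_2\) be the corresponding distances reached.

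First, the free surface. By Lemma~\ref{lma:dim-cardinal-vine}, \(D_i\ge T_i\). The convex hull of the two far points and the origin is a segment of length \(D_1+D_2\), so its perimeter is \(2(D_1+D_2)\). Using the exclusion zone, Lemma~\ref{lma:wall-reqs-ray-motion} then gives \(S_F>2(T_1+T_2)\).

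Second, the covered surface. At each time \(s\), let \(h(s)\in\{0,1,2\}\) be the number of halted tendrils. I claim that while \(h(s)\ge1\) there are at least two fronts (Lemmas~\ref{lma:wall-reqs-rays} and~\ref{lma:other-wall-reqs-rays}). I also want to show that while \(h(s)=2\) there are at least \(4\) fronts, or else account for it otherwise. If the two halts are on distinct walls on opposite sides, each needs its own pair of fronts. The delicate case is when both tendrils are blocked by the same connected wall system, so one pair of fronts might serve both. Then Lemma~\ref{lma:total-creeping} gives \(S_C\ge 2\int \mathbf{1}[h\ge1]\). Combining, \(2W=S_F+S_C\), and \(T=T_1+T_1^c\) where \(T_1^c\) is tendril 1's halted time.

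The target inequality is \(3T<2W\), which with \(W\le\lambda T\) contradicts \(\lambda\le1.5\). Let \(A\) be the time both tendrils are free, \(B\) the time exactly one is halted, and \(C\) the time both are halted, so \(T=A+B+C\). Then \(T_1+T_2=2A+B\), and \(2W>4A+2B+2B+\kappa C\). Here \(\kappa\) is the front count while both tendrils are halted; by the claim above it is at least \(2\), and at least \(4\) when the two halts are on separate walls.

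With \(\kappa=2\) this only gives \(2W>4A+4B+2C\), which fails to beat \(3T\) when \(C\) dominates. So the key extra ingredient is to handle the both-halted time. I would use the opposite directions: when both tendrils are halted by one connected wall, that wall must cross from the east ray to the west ray around the exclusion zone. A wall doing this has length at least about twice the current reach, plus it must eventually be enveloped or enclosed. I would try to show that during both-halted periods either there are four fronts, or the halting wall's free outer surface already contributes extra to \(S_F\) beyond the segment hull, enough to yield \(\kappa_{\text{eff}}\ge3\). Then \(2W>4A+4B+3C\ge3T\).

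This step is the main obstacle, and I expect it to need a careful case analysis on whether the two halting walls are in the same connected component.

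\textbf{Sufficiency.} For \(\lambda=1.5+\epsilon\), I would adapt the ladder construction of Theorem~\ref{thm:cardinal-1-vine}. Place unit-height walls alternately ahead of each tendril, so that each tendril spends most of its time halted while only one pair of fronts creeps at a time. Staggering keeps one tendril advancing while the other's wall is being enveloped, which uses wall at rate about \(1\) for fronts plus a hull growth of about \(1/2\) per unit time. The saved \(\epsilon T\) then eventually exceeds the bounding box perimeter \(O(1)+O(\delta T)\), and I would choose \(\delta\) proportional to \(\epsilon\) as before.
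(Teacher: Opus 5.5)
Your bookkeeping up to $2W>4A+4B+\kappa C$ is sound, but the step you defer is the entire content of the lower bound. The pointwise version you propose cannot come from geometry alone, because it fails for the paper's own containment strategy at $\lambda=1.5+\epsilon$.

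In that strategy, once the horizontal wall overtakes the east tendril and is turned back west, both tendrils stay halted for about as long as the preceding one-halted phase. During this phase there are exactly two fronts: one climbing the west baffles and one creeping west under the returning wall. The only free surface laid down meanwhile is the outer side of the returning wall, which is precisely the second half of the segment-hull perimeter $2(D_1+D_2)$ you already credited. So $\kappa_{\mathrm{eff}}=2$ on that phase, and $2W\approx 4A+4B+2C\approx 3T$ is tight. The both-halted time can therefore only be controlled in an amortized way, not instant by instant.

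The paper does this with a first-occurrence argument. The inequality $2W>4T-2T^{(2)}_{1,2,C}$ forces, when $\lambda\le 1.5$, some moment at which both tendrils are halted with only two fronts. At the first such moment $T$, the two fronts have met along a connecting wall crept on one side only, of length $D_3+D_4>D_1+D_2$, and both of its sides count. On top of that come the earlier non-overlapping creeping $2C_1+2C_2$, the off-path creeping $C_3+C_4$, and the creeping $T_3+T_4$ needed to keep both tendrils halted. With $T_j\le D_j+C_j$ and $T_1+T_3=T_2+T_4=T$, this gives $W>3T/2$ already at time $T$, contradicting $W\le\lambda T$. Nothing in your proposal plays this role.

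Your sufficiency sketch also does not reach $1.5+\epsilon$. If one tendril is always advancing, $D_1+D_2$ grows at unit rate. The enclosure then costs about $2$ per unit time (not $1/2$) on top of the ladder walls, roughly $3T$ in all. If instead both tendrils are kept mostly halted by separate ladder walls, each wall spawns its own pair of fronts. You then pay about $2T$, which only works for $\lambda>2$.

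Keeping both tendrils halted with a single pair of fronts requires the walls to be connected, and that is the paper's idea. The west tendril hits a short vertical wall attached to a long horizontal wall just below the axis. One front is absorbed creeping east along the top of that horizontal wall. The other must climb up and down each new unit-length baffle, costing about $0.5$ per unit time. The remaining $1+\epsilon$ extends the horizontal wall, which is part of the final enclosure. It eventually overtakes the east tendril and is brought back over the top to close.
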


\begin{proof}[Proof of sufficiency]
Assume \(\lambda=1.5+\epsilon\) for \(\epsilon > 0\). Assume the vine moves directly west and east. We begin by constructing a vertical wall from (-1,1) to (-1,\(-\delta\)), connected to a horizontal wall which extends from \((-1,-\delta)\) to \((-0.5-\delta,-\delta)\).

At time 1, the west tendril hits our wall, and begins to creep around it. As it does so, we continue the horizontal wall east at rate \(1+\epsilon\). 

\begin{figure}[H]
    \centering
    \includegraphics[width=1.0\linewidth]{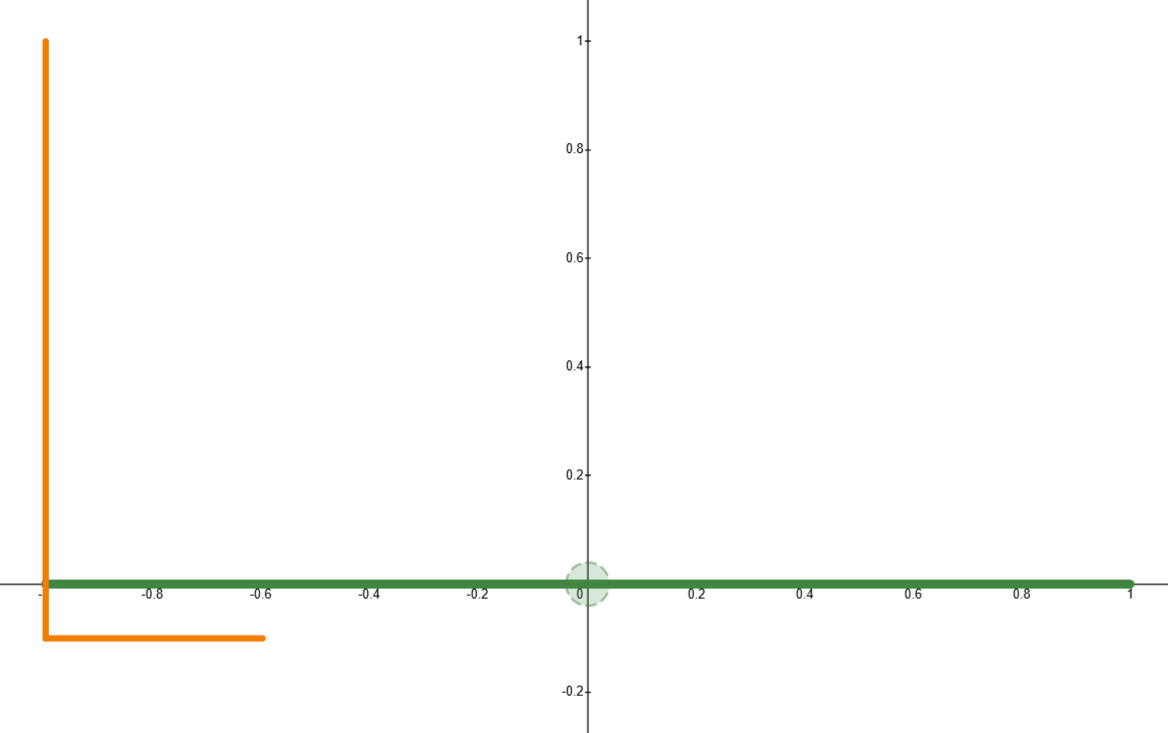}
    \caption{The cardinal 2-vine at time 1.}
    \label{fig:the-cardinal-2vine-at-time-1}
\end{figure}

At time 3, the west tendril would hit the x axis again, but we have already drawn a wall of length 1 on that point at an arbitrarily small angle to the previous vertical wall, diverting the tendril on another journey of length 2 in a manner that can be maintained indefinitely with an effective \(\lambda\) of 0.5.

\begin{figure}[H]
    \centering
    \includegraphics[width=1.0\linewidth]{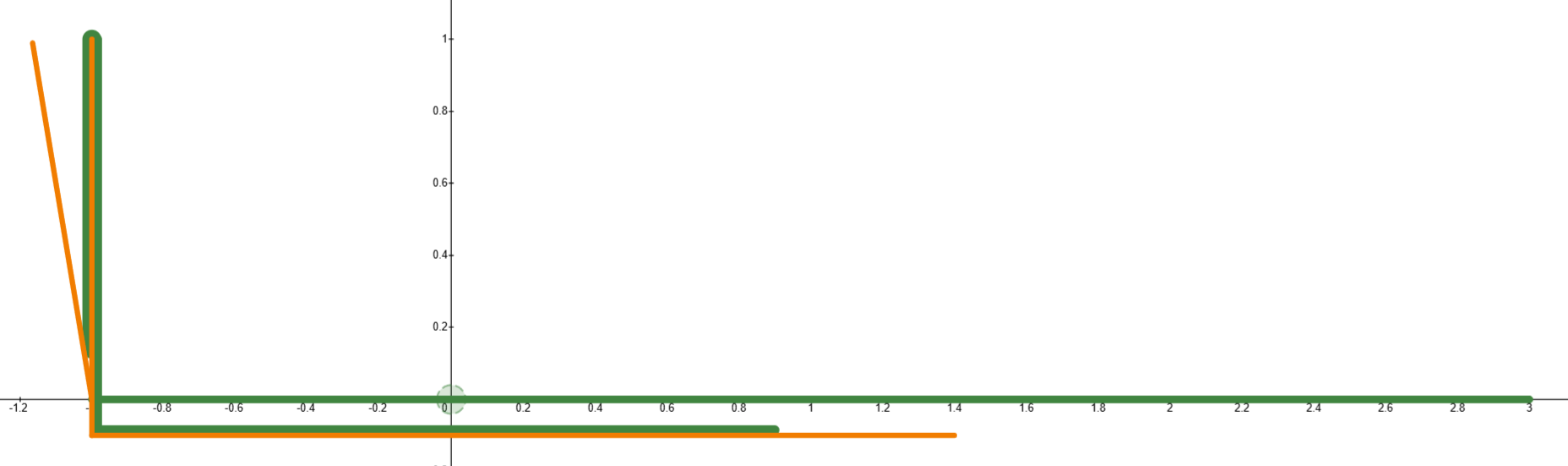}
    \caption{The cardinal 2-vine at time 3.}
    \label{fig:the-cardinal-2vine-at-time-3}
\end{figure}

We then catch up to the east-moving tendril, eventually, as we are building our wall east at a rate of \(\epsilon\) faster, and when we do, we return this wall, vine creeping along the interior, all the way to the other side where we capture it.
\end{proof}

\begin{figure}[H]
    \centering
    \includegraphics[width=1.0\linewidth]{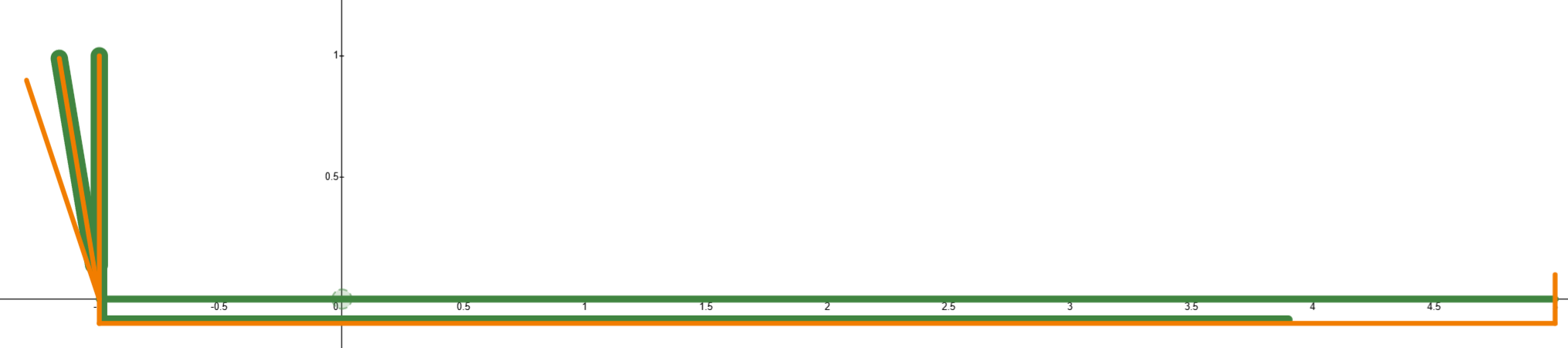}
    \caption{The cardinal 2-vine having both tendrils contained}
    \label{fig:the-cardinal-2vine-having-both-tendrils}
\end{figure}

\begin{figure}[H]
    \centering
    \includegraphics[width=1.0\linewidth]{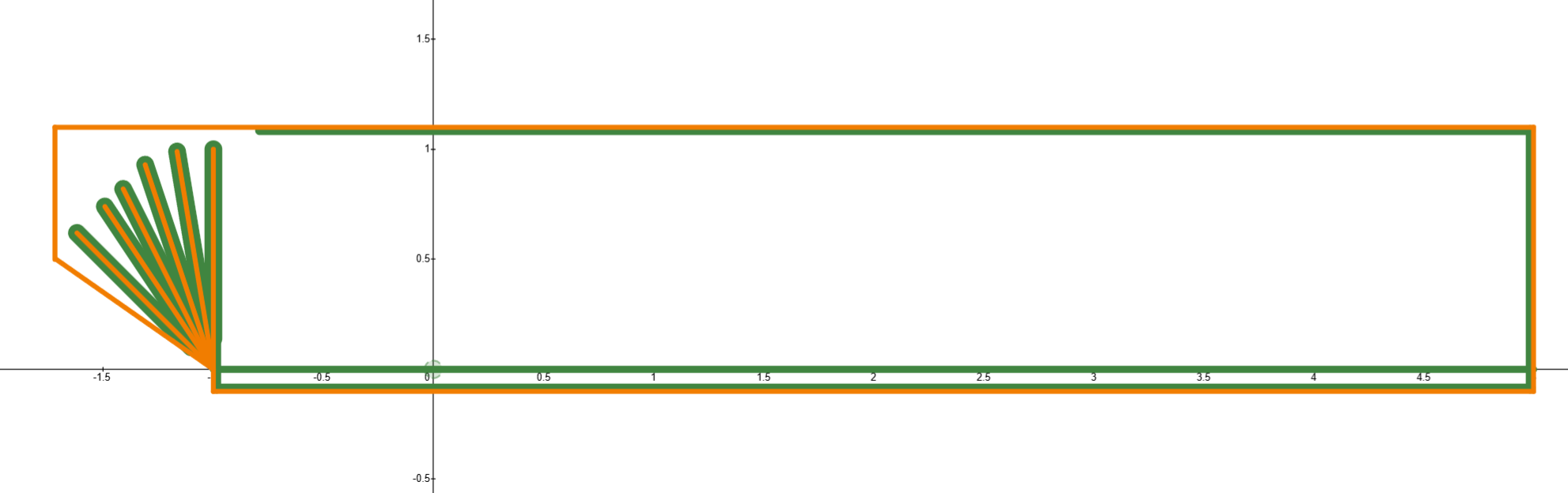}
    \caption{The cardinal 2-vine entirely contained.}
    \label{fig:the-cardinal-2vine-entirely-contained}
\end{figure}

\begin{proof}[Proof of necessity]
Assume \(\lambda=1.5\). We appear to have the same equation as before, though now with two tendrils, one might be forgiven for thinking we have 
\[2W=S_F+S_C>4V_F+4V_C=2(2V_F+2V_C)=2(2T_F+2T_C)=4T\]
as there are two tendrils active at any one time. However, this is not accurate: if both tendrils are currently halted, meaning that the last points at which they struck walls and halted must both have created points for the vine to creep, the vine may only be creeping at two points instead of four if the two of those creeping points collided thus causing the creeping to overlap. Lemma~\ref{lma:creeping-rays} and Lemma~\ref{lma:other-wall-reqs-rays} guarantee that we have at least \textit{two} fronts when both tendrils are halted, not at least four. 

Note that, by the above lemmas, should both tendrils be halted and there are no points at which the vine is creeping, then the entire surface struck by both tendrils must be covered by the vine. As this cannot include any more extreme points on the \(x\)-axis, as that would violate our assumption that the creeping arose from the last points at which the tendrils struck walls, the vine must be kept internal to those walls and thus the vine is currently contained. This cannot happen until the above inequality is satisfied, and therefore, to be able to arrive at the point of containment, the vine must at some point have both tendrils halted with the creeping from the last points of the tendrils overlapping.

\begin{figure}[H]
    \centering
    \includegraphics[width=0.4\linewidth]{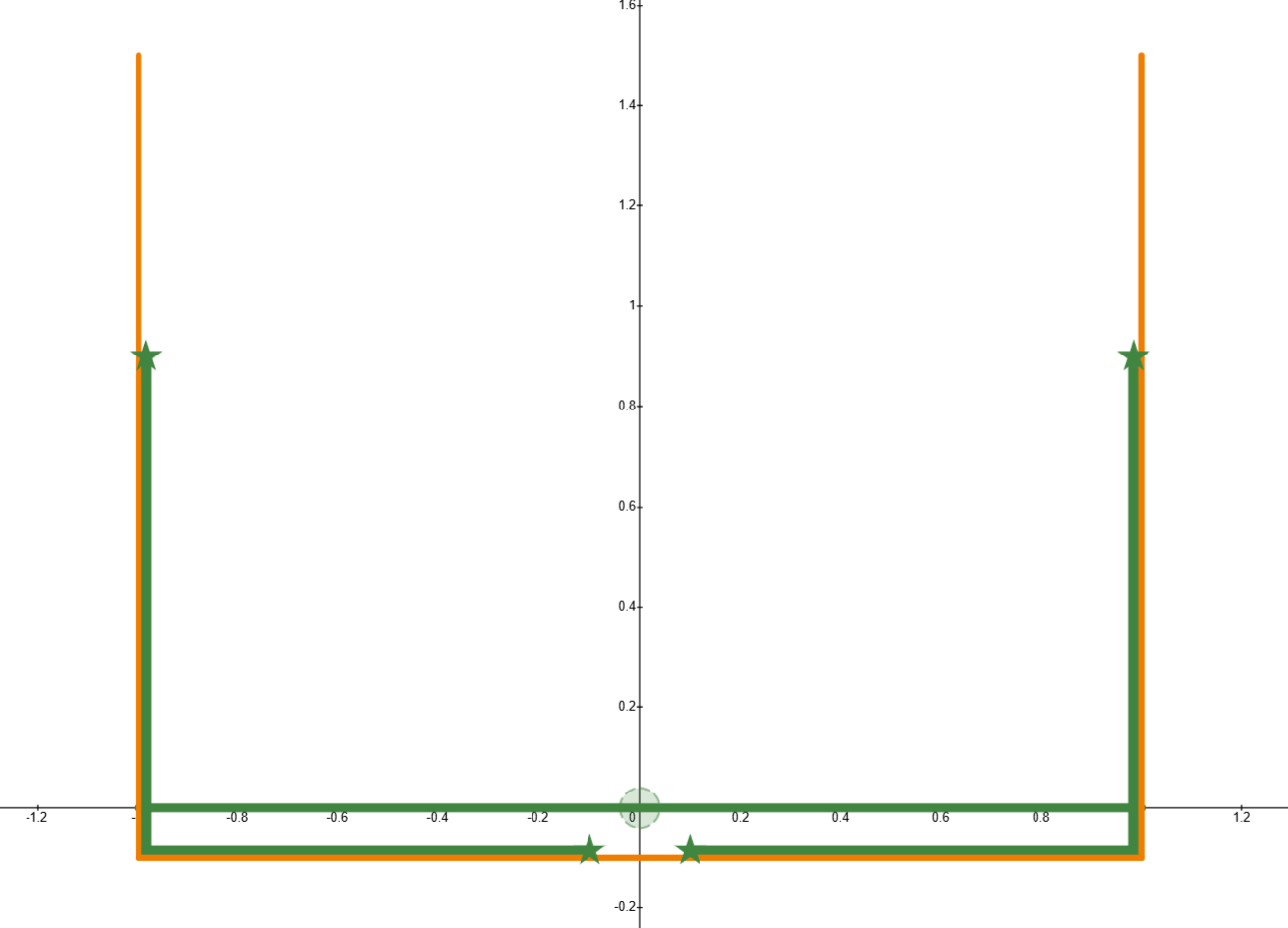}
    \includegraphics[width=0.4\linewidth]{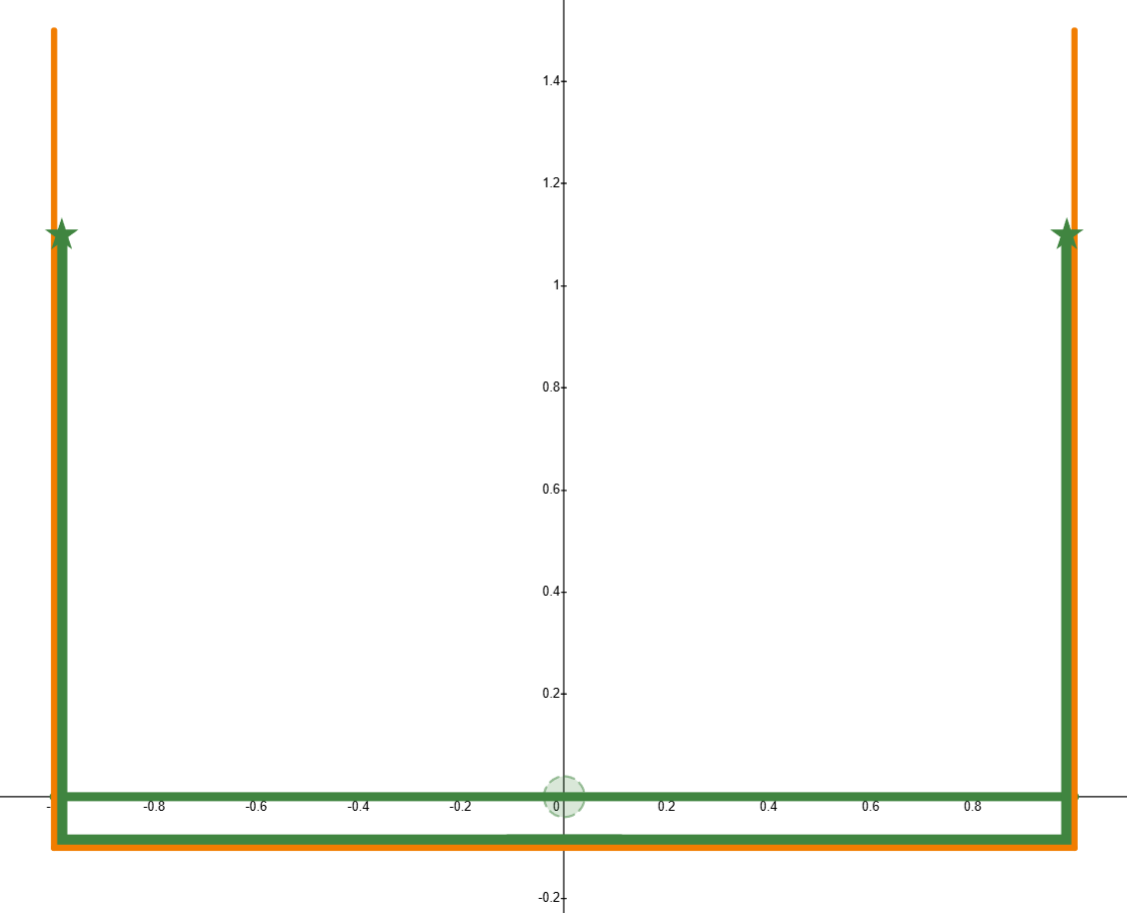}
    \caption{The vine creeping at four points and then two points, marked with stars.}
    \label{fig:the-vine-creeping-at-four-points}
\end{figure}

In other words, the true inequality is 
\begin{align*}
2W&=S_F+S_C\\
&=S_F+S_{1,C}+S_{2,C}\\
&>2V_{1,F}+2V_{2,F}+2V_{1,\not2,C}+2V_{1,2,C}^{(4)}+V_{1,2,C}^{(2)}+2V_{2,\not1,C}+2V_{1,2,C}^{(4)}+V_{1,2,C}^{(2)}\\
&=2T_{1,F}+2T_{2,F}+2T_{1,\not2,C}+2T_{1,2,C}^{(4)}+T_{1,2,C}^{(2)}+2T_{2,\not1,C}+2T_{1,2,C}^{(4)}+T_{1,2,C}^{(2)}\\
&=2(T_{1,F}+2T_{1,\not2,C}+2T_{1,2,C}^{(4)}+2T_{1,2,C}^{(2)})+2(T_{2,F}+2T_{2,\not1,C}+2T_{1,2,C}^{(4)}+2T_{1,2,C}^{(2)})-2T_{1,2,C}^{(2)}\\
&=2T+2T-2T_{1,2,C}^{(2)}\\
&=4T-2T_{1,2,C}^{(2)}
\end{align*}
where \(V_{i,F}\) is the distance moved freely over by tendril \(i\),  \(V_{i,\not j, C}\) is the distance that two fronts would creep over the time that tendril \(i\) was halted and tendril \(j\) was not, \(V_{i, j, C}^{(4)}\) is the distance that four fronts would creep over the time that tendrils \(i\) and \(j\) were both halted, \(V_{i, j, C}^{(2)}\) is the distance that two fronts would creep over the time that tendrils \(i\) and \(j\) were both halted, \(T_{i,F}\) is the total time tendril \(i\) was not halted,  \(T_{i,\not j, C}\) is the total time that tendril \(i\) was halted and tendril \(j\) was not, \(T_{i, j, C}^{(4)}\) is the total time that tendrils \(i\) and \(j\) were both halted and there were at least four fronts, and \(T_{i, j, C}^{(2)}\) is the total time that tendrils \(i\) and \(j\) were both halted and there were only two fronts. Naturally, \(T=T_{i,F}+T_{i,\not j, C}+T_{i, j, C}^{(4)}+T_{i, j, C}^{(2)}\), as over the course of the full time, tendril \(i\) is either halted or not halted, and if tendril \(i\) is halted, then tendril \(j\) is either halted or not halted, and if tendril \(j\) is halted, then either there were 2 fronts active or there were more than two fronts. 

And, as \(\lambda T\geq W\), we have 
\[2\lambda T > 4T-2T_{1,2,C}^{(2)}\] 
\[\lambda > 2-\frac{T_{1,2,C}^{(2)}}{T}\] 

So if \(\lambda\leq 1.5\), \(\frac{T_{1,2,C}^{(2)}}{T}> 0.5 \) and thus \(T_{1,2,C}^{(2)}>0\). Therefore, there must be some time where both tendrils are halted and there are only two fronts. Therefore, there must be some first such time.

We now analyze what is required for the first time there are two halted tendrils with only two fronts. 

Suppose that this is the case at some time \(T\), and it is not the case for any time \(<T\). The west tendril hit the wall it's currently creeping on at distance \(D_1\) and the east tendril hit at \(D_2\). If the west tendril hit the wall at a time \(T_1\), then \(T_1\leq D_1+C_1\) where \(C_1=T_{1,\not2,C}+T_{1,2,C}\) is the time the tendril spent creeping (the notation is simplified here from the prior notation for improved clarity of both the diagrams and the algebra). An example of this can be seen in Figure~\ref{fig:the-cardinal-2vine-the-distances-and}, where the left tendril skips over a portion of the \(x\)-axis by creeping along the top of the wall. 

\begin{figure}[H]
    \centering
    \includegraphics[width=0.8\linewidth]{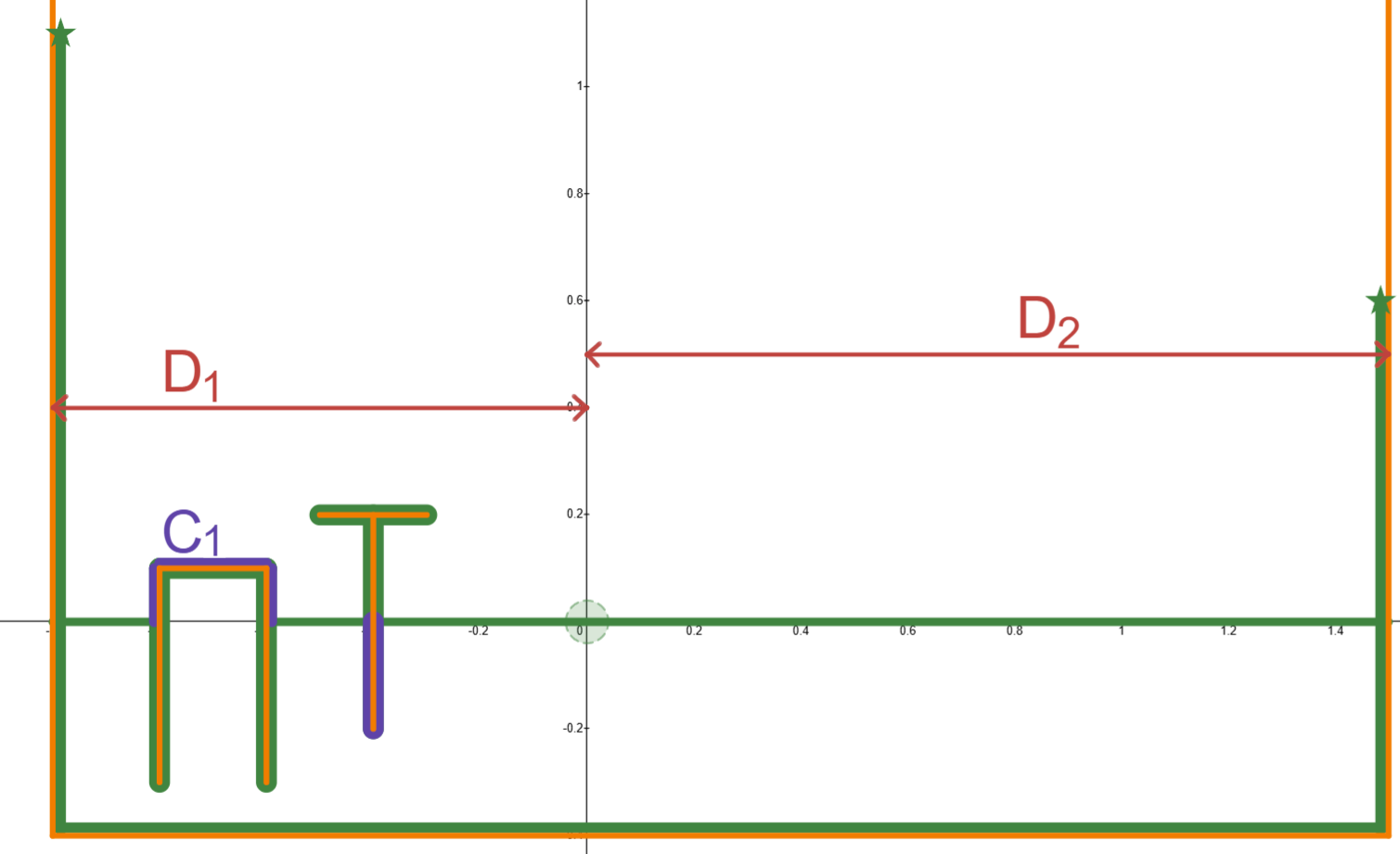}
    \caption{The cardinal 2-vine. The distances \(D_1\) and \(D_2\) are in red, while \(C_1\) is in purple. Note that, here, \(T_1<D_1+C_1\) as the tendril does skip part of the \(x\)-axis while creeping.}
    \label{fig:the-cardinal-2vine-the-distances-and}
\end{figure}

As shown above, this requires a length of at least \(2C_1\) surface being touched prior to \(T_1\). Similarly, if the east tendril hit the wall at time \(T_2\), then \(T_2\leq D_2+C_2\) with a length of \(2C_2\) surface being necessarily touched by the vine before time \(T_2\). 

Note that, as there is no time before \(T\) such that both tendrils were halted and there were only two fronts, and \(T_1,T_2\leq T\), if both tendrils were halted prior to \(T\), there must have been at least four fronts. Therefore, the total length of surface touched during the times \(C_1\) and \(C_2\) must be at least \(2C_1+2C_2\), with no crossover.

\begin{figure}[H]
    \centering
    \includegraphics[width=1.0\linewidth]{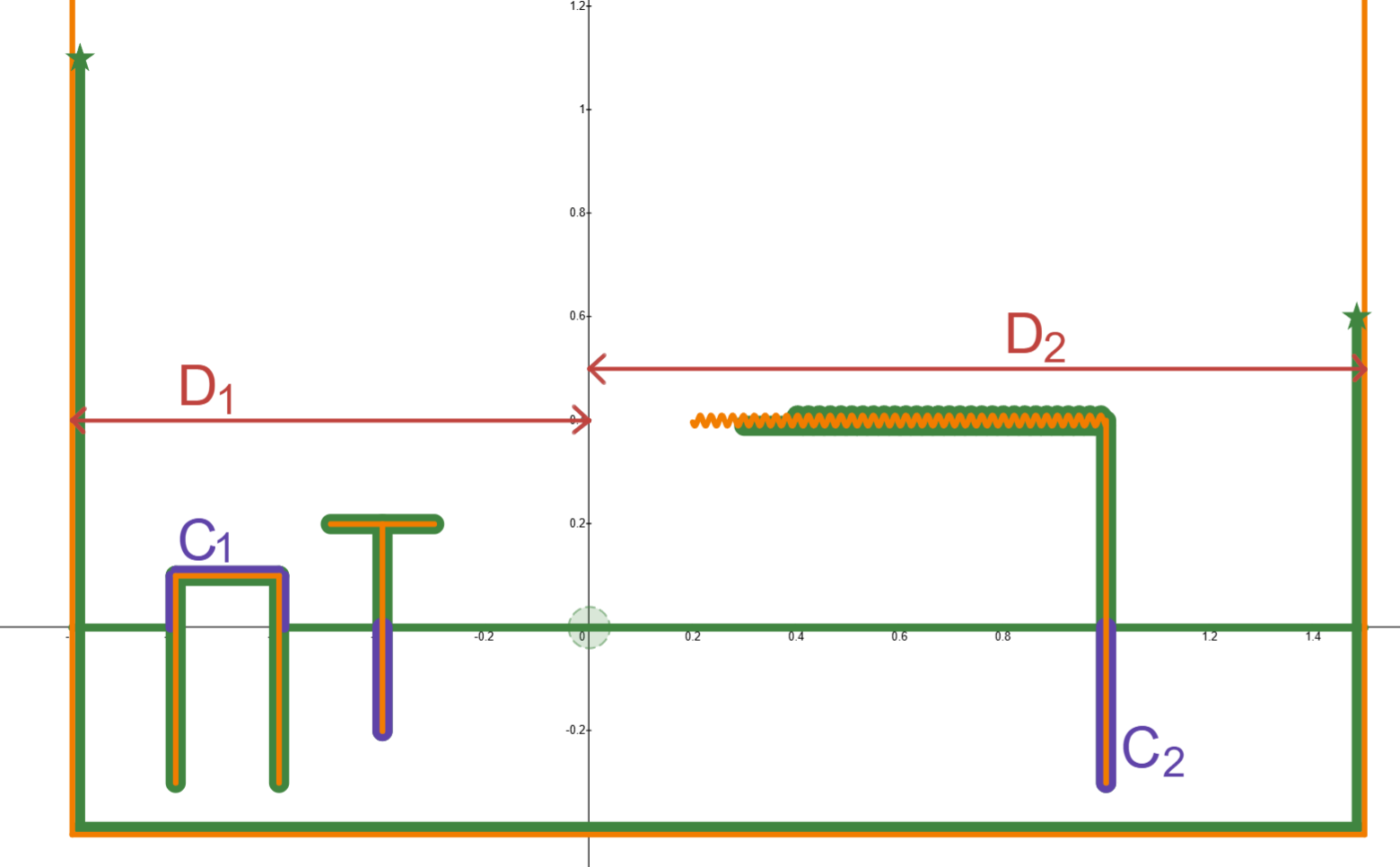}
    \caption{The cardinal 2-vine. The distances \(D_1\) and \(D_2\) are in red, while \(C_1\) and \(C_2\) are in purple. Also note that, while not all of the wall along the east tendril's path is fully covered, the covered surface must be of length at least \(C_2\) on either side}
    \label{fig:the-cardinal-2vine-the-distances-and-2}
\end{figure}

Now, as the two fronts need to touch, the vine must creep a total distance of at least \(D_1+D_2\) along surfaces connecting the two points of the wall that the east and west tendrils hit. In order to make progress, and not backtrack, this requires the front to creep along the surface only in one direction. The length that a front must creep from the west side to the meeting point is \(D_3\), and the length that a front must creep from the east side to the meeting point is \(D_4\), which thus requires a unique contiguous stretch of walls touched by the vine only on one side from the point that the west tendril last was halted at to the point that the east tendril was last halted at. This therefore takes time at most \(D_3\) for the west front and at most \(D_4\) for the east front, as the slowest that lengths \(D_3\) and \(D_4\) can be crept along is 1 unit per second.

We let the time that a front is not moving along these one-sided walls on the west side be \(C_3\), and likewise we let the time that a front is not moving along these one-sided walls on the east side be \(C_4\). During these times, as both tendrils are halted and there must be at least 4 fronts active, the surface covered by the at least 2 fronts that will be returning to the one-sided walls must be at least \(C_3\) and \(C_4\), respectively.

For the west and east tendrils, respectively, this takes wall surface lengths \(D_3\) and \(D_4\) touched by the vine, lengths \(D_3\) and \(D_4\) of surface not touched by the vine, lengths \(C_3\) and \(C_4\) of surface for creeping, and therefore times \(T_3 \leq  D_3+C_3\) and \(T_4\leq D_4+C_4\), noting that \(T_1+T_3=T_2+T_4=T\), \(D=D_1+D_2< D_3+D_4=D'\) as \(D_3\) and \(D_4\) must make up the horizontal distance gained by \(D_1\) and \(D_2\) while also moving around the tiny exclusion zone around the origin, and at time \(T\), a length of \(2C_1+2C_2+C_3+C_4\) of surface being necessarily touched by the vine prior to time \(T\). 

\begin{figure}[H]
    \centering
    \includegraphics[width=1.0\linewidth]{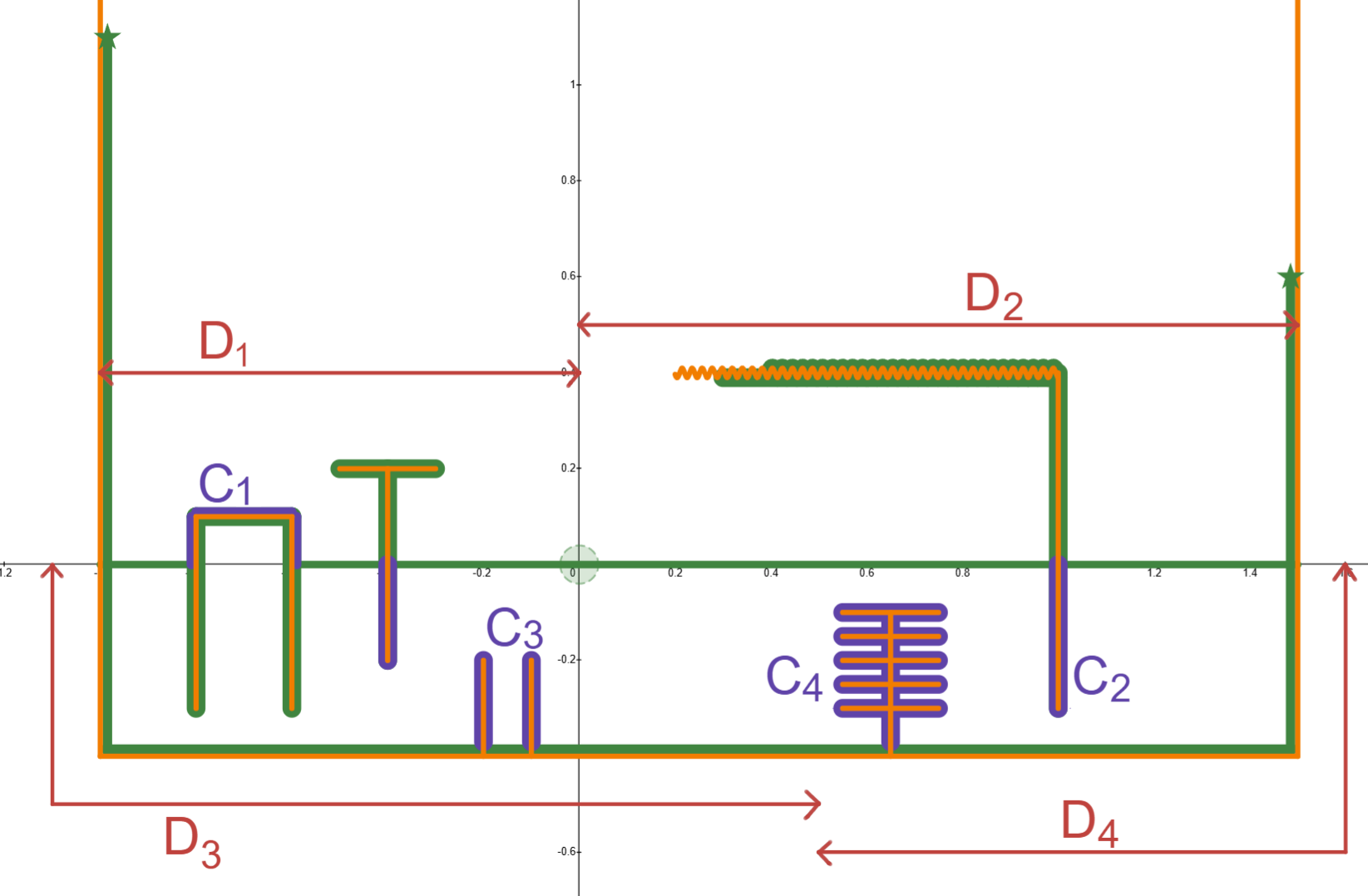}
    \caption{Observe that, as the vine is creeping only on one side of the wall on the bottom, all lengths are exact and thus \(T_3\) and \(T_4\) are equalities.}
    \label{fig:observe-that-as-the-vine-is}
\end{figure}

Note that \(C_3\) and \(C_4\) are not multiplied by two here, unlike elsewhere, as there is no ``other" front created. This brings our wall surface total to \(2C_1+2C_2+C_3+C_4+2D_3+2D_4\).

Moreover, in order to prevent freeing the east and west tendrils, we must occupy that other half of the creeping. The best we can do by Lemma~\ref{lma:total-creeping} is lengths of \(T_3\) surface on the west and \(T_4\) surface on the east, as we need to prevent the freeing of the east and west tendrils for \(T_3\) and \(T_4\) seconds, respectively. 

\begin{figure}[H]
    \centering
    \includegraphics[width=1\linewidth]{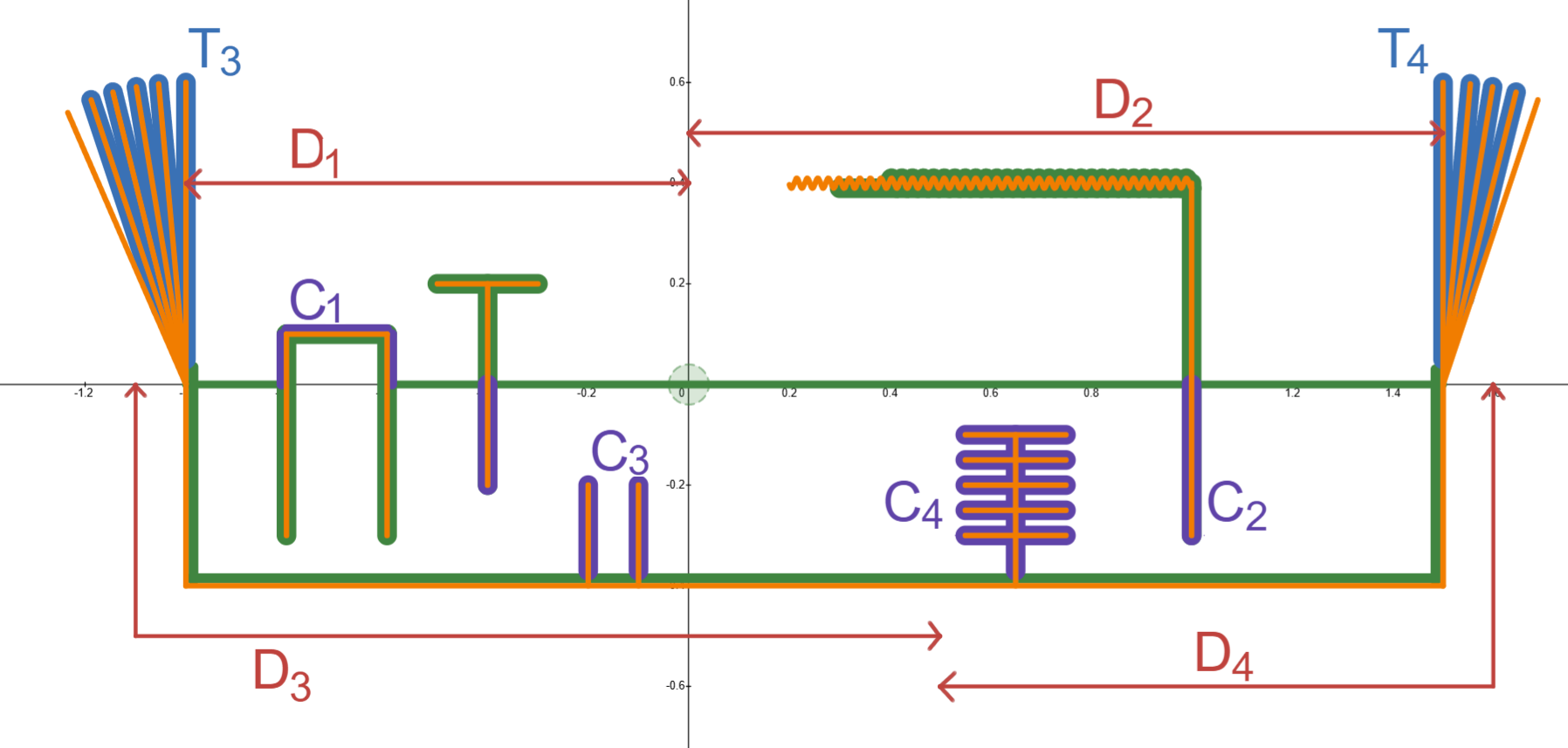}
    \caption{The full diagram with all labels. }
    \label{fig:the-full-diagram-with-all-labels}
\end{figure}

In total, this gives us a required length of wall at time \(T\) of 
\begin{align*}
2W &> 2C_1+2C_2+C_3+C_4+2D_3+2D_4+T_3+T_4\\
W &> C_1+C_2+\frac{C_3+C_4}{2}+D_3+D_4+\frac{T_3+T_4}{2}\\
 &= \frac{C_1+C_2}{2}+\frac{C_1+C_2+C_3+C_4}{2}+\frac{D_3+D_4+D_3+D_4}{2}+\frac{T_3+T_4}{2}\\
 &> \frac{C_1+C_2}{2}+\frac{C_1+C_2+C_3+C_4}{2}+\frac{D_1+D_2+D_3+D_4}{2}+\frac{T_3+T_4}{2}\\
 &= \frac{C_1+C_2}{2}+\frac{D_1+D_2}{2}+\frac{C_1+C_2+C_3+C_4}{2}+\frac{D_3+D_4}{2}+\frac{T_3+T_4}{2}\\
 &\geq \frac{T_1+T_2}{2}+\frac{C_1+C_2+C_3+C_4}{2}+\frac{D_3+D_4}{2}+\frac{T_3+T_4}{2}\\
 &= \frac{T_1+T_2}{2}+\frac{T_3+T_4}{2}+\frac{C_1+C_2+C_3+C_4}{2}+\frac{D_3+D_4}{2}\\
 &= \frac{2T}{2}+\frac{C_1+C_2+C_3+C_4}{2}+\frac{D_3+D_4}{2}\\
 &= T+\frac{C_1+C_2+C_3+C_4}{2}+\frac{D_3+D_4}{2}\\
\end{align*}
Now, as \(T_1+T_2+T_3+T_4=2T\), we know that \(T_1+T_2\geq T\) or \(T_3+T_4\geq T\). If \(T_1+T_2\geq T\), then as \(D_1+D_2<D_3+D_4\), 

\begin{align*}
W &> T+\frac{C_1+C_2+C_3+C_4}{2}+\frac{D_3+D_4}{2}\\
 &> T+\frac{C_1+C_2+C_3+C_4}{2}+\frac{D_1+D_2}{2}\\
 &> T+\frac{C_3+C_4}{2}+\frac{T_1+T_2}{2}\\
 &\geq T+\frac{C_3+C_4}{2}+\frac{T}{2}\\
 &\geq T+\frac{T}{2}=\frac{3T}{2}\\
\end{align*}
On the other hand, if \(T_3+T_4\geq T\), then 
\begin{align*}
W &> T+\frac{C_1+C_2+C_3+C_4}{2}+\frac{D_3+D_4}{2}\\
 &\geq T+\frac{C_1+C_2}{2}+\frac{T_3+T_4}{2}\\
 &\geq T+\frac{C_1+C_2}{2}+\frac{T}{2}\\
 &\geq T+\frac{T}{2}=\frac{3T}{2}\\
\end{align*}

and so \(\lambda > 3/2=1.5\).
\end{proof}

\begin{corollary}
\label{cor:main}
The blob cannot be contained for any \(\lambda\leq 1.5\); that is, \(\lambda_C\geq 1.5\).
\end{corollary}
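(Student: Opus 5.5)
The corollary should follow by chaining two results already in hand: Lemma~\ref{lma:blob-weakening} and the necessity half of Theorem~\ref{thm:cardinal-2-vine}. I would argue by contraposition.

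Suppose some wall-building strategy contains the blob at some rate \(\lambda\leq 1.5\). By Lemma~\ref{lma:blob-weakening}, the same strategy contains the \(n\)-vine at that \(\lambda\) for every finite \(n\), and in particular the \(2\)-vine running any fixed plan.

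The one point that needs checking is that the lemma really covers the specific plan used in Theorem~\ref{thm:cardinal-2-vine}, namely the cardinal plan with two opposite rays. The proof of the lemma only uses that every vine point at time \(t\) lies within shortest-path distance \(t\) of the origin, given the walls present along the path. That holds for any admissible plan, because tendrils move at speed at most \(1\) and fronts creep at speed \(1\). So the cardinal \(2\)-vine at time \(t\) is contained in the blob at time \(t\).

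I also need the two notions of success to line up. The vine is charged only for the wall whose surfaces it has touched, while the blob is charged for all wall within distance \(t+1\), and the vine region is a subset of the blob region. Hence \(L(S_C(t))\leq \lambda t\) holds automatically. The closed figure that confines the blob at time \(T\) also confines the vine, since the vine lies inside the blob. The exclusion-radius and point-start weakening from the "Prior Results" subsection is used here so that the two models start identically.

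With that in place, the strategy would contain the cardinal \(2\)-vine at rate \(\lambda\leq 1.5\). This contradicts the necessity part of Theorem~\ref{thm:cardinal-2-vine}. Therefore no containment exists for \(\lambda\leq1.5\), and \(\lambda_C\geq 1.5\).

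The only delicate step is the compatibility of the blob and vine containment criteria, in particular the shrinking and exclusion-radius normalization. I would handle it by citing the scaling remark in the introduction. The rest of the argument is bookkeeping.
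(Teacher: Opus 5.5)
Your proposal is correct and follows the paper's proof: the corollary comes from combining Lemma~\ref{lma:blob-weakening} (a strategy containing the blob at rate \(\lambda\) also contains the cardinal \(2\)-vine) with the necessity half of Theorem~\ref{thm:cardinal-2-vine}. Your extra checks are details the paper's one-line proof leaves implicit: that the lemma covers the cardinal plan, that the vine's touched-surface charge is bounded by the blob's charge, and that the point-start with exclusion radius is only a weakening.
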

\begin{proof}
The cardinal \(2\)-vine is a specific instance of the \(n\)-vine, which is a weakening of the blob (Lemma~\ref{lma:blob-weakening}), so Theorem~\ref{thm:cardinal-2-vine} applies directly.
\end{proof}

Unfortunately, for \(n\geq 2\), we can contain the cardinal \(n\)-vine with \(\lambda>1.5\) using the strategy above, whereby we contain two vines, keep one in measure with 0.5 units of wall per second, and with the remaining \(\lambda-0.1>1\) we slowly encircle the remaining vines. Adding more tendrils to the cardinal vine does not improve the bounds.

\begin{theorem}
\label{thm:cardinal-n-vine}
The cardinal \(n\)-vine has critical \(\lambda\) equal to \(1.5\) for \(n>2\). 
\end{theorem}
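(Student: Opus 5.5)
We prove the two directions separately, reusing the 2-vine analysis.

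\emph{Necessity} ($\lambda\le 1.5$ fails). Pick two opposite rays of the cardinal $n$-vine; for even $n$ these exist, and for odd $n$ we pick two rays forming the largest angle, which is close to $\pi$. The point is that the necessity argument of Theorem~\ref{thm:cardinal-2-vine} only used the following facts:
\begin{itemize}
\item the free-travel bound of Lemma~\ref{lma:dim-cardinal-vine} for the two chosen tendrils;
\item the hull bound of Lemma~\ref{lma:wall-reqs-ray-motion}, applied to their furthest points together with the origin;
\item the front-counting Lemmas~\ref{lma:creeping-rays}, \ref{lma:wall-reqs-rays}, \ref{lma:other-wall-reqs-rays} and \ref{lma:total-creeping}, applied to these two tendrils.
\end{itemize}
The extra tendrils only enlarge the vine, and they never reduce either the touched or the untouched surface that the two chosen tendrils force. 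The one place where geometry entered was the inequality $D_1+D_2<D_3+D_4$ together with the hull perimeter, which is greater than twice the span.

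For non-opposite rays I would replace the span by the triangle perimeter $|p_1|+|p_2|+|p_1-p_2|$. This is at least $2\max(|p_1|,|p_2|)$, and more usefully at least $(1+\sin(\alpha/2))(|p_1|+|p_2|)$, where $\alpha$ is the angle between the rays. The connecting path length $D_3+D_4$ must still exceed $|p_1|+|p_2|$ only in the opposite case. So for odd $n$ I would instead use a pair of rays together with the origin and rerun the inequality chain, hoping the slack is absorbed as $n$ grows. If that fails for small odd $n$, I would fall back on a third ray to recover a full $2(D_1+D_2)$ hull bound. \textbf{I expect this geometric step to be the main obstacle.} Alternatively, for odd $n\ge3$ I would observe that Lemma~\ref{lma:wall-reqs-ray-motion} with all $n$ points gives a perimeter at least $2\max_r D_r$. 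With that, the 2-vine chain goes through with $D_1+D_2$ replaced by a quantity it dominates.

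\emph{Sufficiency} ($\lambda>1.5$ suffices). We generalize the construction after Corollary~\ref{cor:main}.
\begin{enumerate}
\item Choose one tendril direction and trap it in the repeating gambit of the 2-vine sufficiency proof. This walls it behind a vertical barrier that is re-deflected every 2 seconds with a unit wall, at an amortized cost of $0.5$ per second.
\item With the remaining budget $1+\epsilon$, build a slowly expanding enclosing wall around all other tendrils. Since $1+\epsilon$ exceeds the unit speed of any single front, we can outpace the free tendrils one at a time, as in Theorem~\ref{thm:cardinal-1-vine}'s sufficiency.
\end{enumerate}
More precisely, we chase each free tendril with a wall built at rate $1+\epsilon$ ahead of it, deflecting it so its further motion is lateral and cheap. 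Once every tendril except the trapped one is deflected, we close the perimeter, exploiting that the trapped tendril keeps the vine's extent bounded on one side. Shrinking the exclusion radius via scaling makes all initial costs negligible, as in Theorem~\ref{thm:cardinal-1-vine}.

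The delicate bookkeeping is making sure that the total length contacted by the vine stays below $\lambda t$ at every time, not just at the end. I would check this by bounding the contacted length by $0.5t+(1+\epsilon')t+O(1)$.
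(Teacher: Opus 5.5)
\textbf{Sufficiency.} Your sufficiency half is essentially the paper's proof. You keep one front of the first halted tendril in a baffle at $0.5$ per second, spend the remaining $\lambda-0.5>1$ on a chasing wall, capture the other tendrils one at a time, and close the loop.

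Two refinements. First, state the invariant the paper relies on: at each capture, the new tendril's inward front annihilates the incoming chaser (Lemma~\ref{lma:front-erasure}). This leaves again exactly one baffled front and one chaser, which is why the touched length grows at rate $0.5+1$. Tendrils are halted, not ``deflected.'' Second, the overhead is not $O(1)$ once $n\ge 3$. The new chaser leaves the $(k-1)$-st capture point (distance $R_{k-1}$) no earlier than time $R_{k-1}$, so it reaches ray $k$ at least $R_{k-1}(1-\cos(2\pi/n))$ after tendril $k$ does. Four fronts are therefore active for a time proportional to $R_{k-1}$. The $\epsilon t$ slack absorbs this only if the capture distances grow geometrically, e.g.\ $R_k\gg R_{k-1}/\epsilon$.

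\textbf{Necessity for odd $n$.} This is the genuine gap. The paper's own proof of this theorem gives only the construction and no lower-bound argument, so you cannot take it from there, and your sketch does not supply it.

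The $2$-vine chain starts from $S_F>2(V_{1,F}+V_{2,F})$ and later uses $D_1+D_2<D_3+D_4$; both need the two rays to be opposite. For rays at angle $\alpha<\pi$, the triangle perimeter $|p_1|+|p_2|+|p_1-p_2|$ is strictly less than $2(|p_1|+|p_2|)$. Likewise, the creeping path joining the two hit points can be the chord, which is shorter than $D_1+D_2$.

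Your fallback to $2\max_r D_r$ runs the wrong way. Every line of the chain is a lower bound on $W$, so replacing $D_1+D_2$ by a quantity it dominates weakens each later line and ends with $W>cT$ for some $c<3/2$. And ``slack absorbed as $n$ grows'' says nothing for $n=3$, or for any fixed odd $n$.

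This is not a bookkeeping artifact: two tendrils alone at any angle $\alpha<\pi$ can be stopped for some $\lambda<1.5$. Let both hit one segment at distance $d$ at time $d$, baffle both outer fronts, and let the inner fronts meet. At the meeting time $d(1+\sin(\alpha/2))$ the touched length is $3d\sin(\alpha/2)$, giving a ratio $3\sin(\alpha/2)/(1+\sin(\alpha/2))<3/2$; for $\alpha=2\pi/3$ this is $6\sqrt3-9\approx1.39$. Afterwards the two baffled fronts cost only $1$ per second, so slack accrues and the back of the loop can be closed later. Hence, for odd $n$, any proof must bring the remaining tendrils into the accounting essentially, and your proposal does not say how.

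\textbf{Necessity for even $n$.} Even here, your claim that extra tendrils never reduce the forced surface is asserted rather than proved. The $2$-vine chain pivots on the first time both tendrils are halted with only two fronts. That case analysis has to be redone once other tendrils create and absorb fronts, or replaced by an explicit comparison with the cardinal $2$-vine.
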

\begin{proof}[Proof]
Using a similar strategy to containing the cardinal 2-vine with \(\lambda>1.5\), we first block one tendril. Occupying one of its fronts with a baffle, requiring 0.5 walls per second, we then use the remaining \(\lambda-0.5>1\) walls per second to chase down and catch an adjacent tendril. Now that we have done so, once the internal fronts collide, we again have two fronts: one baffled at 0.5 walls per second, and one outsped at \(\lambda-0.5>1\) walls per second. Using this, we can catch up to and contain each tendril in turn, and then eventually finish our loop to contain the cardinal \(n\)-vine.
\end{proof}

Further work is needed to push the lower bound past 1.5.

\end{document}